\theoremstyle{plain}
\newtheorem{lem}{Lemma}[section]
\newtheorem{cor}[lem]{Corollary}
\newtheorem{prop}[lem]{Proposition}
\newtheorem{thm}[lem]{Theorem}
\theoremstyle{definition}
\newtheorem{ex}[lem]{Example}
\newtheorem{rem}[lem]{Remark}
\newtheorem{dfn}[lem]{Definition}
\newtheorem{conj}[lem]{Conjecture}
\newcommand{\Ss}{\mathbf{S}}   % equivariant coefficient ring
\newcommand{\Rr}{\mathbf{R}}  % coefficient ring
\newcommand{\Qq}{\mathbf{Q}}   % localized equivariant ring
\newcommand{\DF}{\mathbf{D}_F}   % formal affine Demazure algebra
\newcommand{\Z}{\mathbb{Z}}       % integers
\newcommand{\unit}{\mathbf{1}}     % identity element
\newcommand{\al}{\alpha}
\newcommand{\be}{\beta}
\newcommand{\de}{\delta}
\newcommand{\la}{\lambda}
\newcommand{\La}{\Lambda}
\newcommand{\hh}{\operatorname{\mathtt{h}}} % Cohomology theory
\newcommand{\pt}{\operatorname{pt}}   % the class of point
\newcommand{\End}{\operatorname{End}}     % endomorphisms
\newcommand{\im}{\operatorname{im}}            % image
\newcommand{\SW}{\Ss_W}    % twisted group algebra over S
\newcommand{\SWd}{\Ss_W^\star}     % dual of the previous
\newcommand{\QW}{\Qq_W}      % localized twisted group algebra
\newcommand{\QWd}{\Qq_W^*}     %  dual of the previous 
\newcommand{\DFd}{\DF^\star}   % the dual of formal affine Demazure alg
\newcommand{\DFP}[1]{\mathbf{D}_{F,#1}} % parabolic formal affine Demazure algebra w.r.t. J
\newcommand{\DFPd}[1]{\mathbf{D}_{F,#1}^\star} % dual module 
\newcommand{\nmxiX}[1]{\hat\zeta_{#1}^{F_m, J}}  %The Schubert basis defined by X with F_m
\newcommand{\nmxiY}[1]{\zeta_{#1}^{F_m, J}}        % the Schubert basis defined by Y with F_m
\newcommand{\naxi}[1]{\zeta_{#1}^{F_a, J}}            % the schubert basis with F_a
\newcommand{\frakm}{\mathfrak{m}}  
\newcommand{\frakn}{\mathfrak{n}}
\newcommand{\mX}[2]{\hat\zeta^{#1}_{I_{#2}}}  %the Schubert basis defined by X
\newcommand{\mY}[2]{\zeta^{#1}_{I_{#2}}}       % the Schubert basis defined by Y 
\newcommand{\mXG}[2]{\hat\zeta^{#1}_{{#2}}}  %the Schubert basis defined by X
\newcommand{\mYG}[2]{\zeta^{#1}_{{#2}}}       % the Schubert basis defined by Y 
\newcommand{\calM}{\mathcal{M}}
\newcommand{\calN}{\mathcal{N}}
\newcommand{\DFm}{{D}_m}
\newcommand{\DFmd}{{D}_{m}^\star}
\newcommand{\DFmdP}[1]{{D}^\star_{m,#1}}
\newcommand{\Dh}{D_{F_t}}
\newcommand{\DFh}{\mathbf{D}_{F_t}}
\newcommand{\DFhd}{\mathbf{D}_{F_t}^\star}
\newcommand{\DFhdP}[1]{\mathbf{D}^\star_{F_t,#1}}
\begin{document}

\title[Kazhdan-Lusztig basis and Schubert classes]{Parabolic Kazhdan-Lusztig basis, Schubert classes, \\and equivariant oriented cohomology}

\author[C.~Lenart]{Cristian Lenart}
\address[Cristian Lenart]{Department of Mathematics and Statistics, State University of New York at Albany, 
Albany, NY 12222, U.S.A.}
\email{clenart@albany.edu}
\urladdr{http://www.albany.edu/\~{}lenart/}

\author[K.~Zainoulline]{Kirill Zainoulline}
\address[Kirill Zainoulline]{Department of Mathematics and Statistics, University of Ottawa, 585 King Edward Street, Ottawa, ON, K1N 6N5, Canada}
\email{kirill@uottawa.ca}
\urladdr{http://mysite.science.uottawa.ca/kzaynull/}

\author[C. Zhong]{Changlong Zhong}
\address[Changlong Zhong]{Department of Mathematics and Statistics, State University of New York at Albany, 
Albany, NY 12222, U.S.A.}
\email{czhong@albany.edu}
\urladdr{http://www.albany.edu/\~{}cz954339/}

\keywords{Schubert calculus, elliptic cohomology, flag variety, Hecke algebra, parabolic Kazhdan-Lusztig basis}
\subjclass[2010]{14M15, 14F43, 55N20, 55N22, 19L47, 05E99}

\begin{abstract} 
We study the equivariant oriented cohomology ring $\hh_T(G/P)$ of partial flag varieties using the moment map approach. We define the right Hecke action on this cohomology ring, and then prove that the respective Bott-Samelson classes in $\hh_{T}(G/P)$  can be obtained by applying this action to the fundamental class of the identity point, hence generalizing previously known results by Brion, Knutson, Peterson, Tymoczko and others. 

We then focus on the equivariant  oriented cohomology theory corresponding to the 2-parameter Todd genus. We give a new interpretation of Deodhar's construction of the parabolic Kazhdan-Lusztig basis. Based on it, we define the parabolic Kazhdan-Lusztig (KL) Schubert classes independently of a reduced word. We make a positivity conjecture, and a conjecture about the relationship of such classes with smoothness of Schubert varieties. We then  prove several special cases.
\end{abstract}

\maketitle

\tableofcontents

%%%%%%%%%%%%%%%%%%%%%
\section{Introduction}

Let $G$ be a split semisimple linear algebraic group over a field $k$ of characteristic 0, with $G\supset P\supset B\supset T$, where $P$ is a parabolic subgroup, $B$ a Borel subgroup, and $T$ a maximal torus. The partial flag variety $G/P$  has many remarkable geometric properties; for instance, it has a $B$-equivariant cellular filtration given by $B$-orbits $\mathcal{O}_w$.
Basic examples of such varieties are projective spaces, Grassmannians, and  smooth quadric hypersurfaces.
In the present paper we study the cohomology ring $\hh_T(G/P)$, where $\hh_T$ is the $T$-equivariant oriented cohomology theory in the sense of Levine-Morel \cite{LM}.
Observe that the Chow groups $CH$ (or singular cohomology) and Grothendieck's $K_0$ are standard examples of such theories. The corresponding  equivariant versions have been introduced and extensively studied by Brion \cite{Br97}, Totaro \cite{To}, Edidin-Graham \cite{EG}, and others. A universal example of such a theory is given by the algebraic cobordism~$\Omega$ of Levine-Morel \cite{LM}. Its $T$-equivariant version $\Omega_T$ has been extensively studied recently (see e.g. \cite{HM13}).
An interesting family of examples comes from formal group laws of (singular) elliptic curves, which give, for instance, stalk versions of the elliptic cohomology of Ginzburg-Kapranov-Vasserot (see e.g. \cite{ZZ}).

The fact that $G/P$ has an equivariant cellular filtration immediately implies that $\hh_T(G/P)$ is a free module over the coefficient ring $\hh_T(\pt)$, with a basis
given by the classes of desingularizations of the orbit closures $X(w)=\overline{\mathcal{O}_w}$; the varieties $X(w)$ are the Schubert varieties.
Hence, the problem of describing the ring $\hh_T(G/P)$ (one of the major problems in generalized Schubert calculus) splits into two parts: (i) constructing a desingularization $\hat X(w)$ for each $X(w)$, and (ii) expressing
the products $[\hat X(w)]\cdot [\hat X(v)]$ as linear combinations of the classes $[\hat X(u)]$.
A fundamental result of Bressler-Evens \cite{BE90} says that the classes $[\hat X(w)]$ do not depend on the choice of a desingularization 
if and only if the theory $\hh$ is obtained by specialization
 from connective $K$-theory 
(a universal birational theory). 
In the latter case (e.g. for Chow groups, singular cohomology, and usual $K$-theory), the situation drastically simplifies, as 
we can replace $[\hat X(w)]$ with the class of the Schubert variety $[X(w)]$ itself,  hence avoiding  the problem~(i). 
In all other cases (e.g. for cobordism or for an elliptic cohomology), this
leads to a natural question of constructing canonical classes in $\hh_T(G/P)$ 
which serve as replacements for $[X(w)]$.

In the present paper we address this question by introducing and applying a new tool which we call a {\em right Hecke action} on $\hh_T(G/P)$.

Historically, the classical left Hecke action (denoted `$\bullet$') was used by Demazure \cite{De73, De74}, by Bernstein-Gelfand-Gelfand \cite{BGG} to construct Schubert classes for ordinary Chow groups, by Arabia \cite{Ar86, Ar89} for $T$-equivariant Chow groups, and by Kostant-Kumar \cite{KK86, KK90} for $T$-equivariant Chow groups and $T$-equivariant $K$-theory. 
Namely, for Chow groups one sets $[X(w)]:=Y_J\bullet (Y_{w^{-1}}\bullet [\pt_J])$, where $J$ is the subset of simple roots corresponding to $P$, $[\pt_J]$ is the fundamental class of the identity point in $CH_T(G/P)$, $Y_{w^{-1}}$ is the push-pull element  in the nil-Hecke ring corresponding to $\mathcal{O}_{w}$, and $Y_J$ is the averaging element. However, the main computational disadvantage of the $\bullet$-action is that it does not generate the Schubert classes (from the class of a point) directly (without applying the averaging element), because $Y_v\bullet [\pt_J]$ does not produce all Schubert classes.
In order to fix this problem (in the context of Chow groups), a new Hecke action was introduced by Brion \cite{Br97}, Peterson \cite{Pe97} and Knutson \cite{Ty08}. It was later studied by Tymoczko in \cite{Ty08, Ty09} (sometimes it is called the Tymoczko action). Geometrically speaking, it coincides with a natural left action of the Weyl group $W$ on the equivariant Chow groups $CH_T(G/P)=CH_G(G/T\times G/P)$ induced by the right $G$-equivariant action of $W$ on $G/T\times G/P$ via  $(gT,x)\cdot \sigma T=(gT\sigma,x)$.  

The next step was done in \cite{CZZ1} and \cite{CZZ2}, where the classical $\bullet$-action was generalized to an arbitrary oriented theory $\hh_T(G/P)$ to construct
classes of the Bott-Samelson desingularizations $\zeta_{I_w}^J$  of the Schubert varieties $X(w)$. 
Namely, one sets $\zeta_{I_w}^J:=Y_J\bullet (Y_{I_{w}^{-1}}\bullet [\pt_J])$, where $Y_{I_{w}^{-1}}$ is the push-pull element in the respective formal affine Demazure algebra, which depends on the choice of a reduced word $I_w$ for $w$. 

In the present paper (see~\S\ref{sec:twac}), we generalize the Brion-Peterson-Knutson-Tymoczko action to an arbitrary oriented  theory $\hh$ (we denote it by `$\odot$' and call it the right Hecke action), hence, completing the picture. Our arguments are based on the following geometric observation:
Consider the convolution product on $CH_T(G/B)\cong CH_G(G/B\times G/B)$; 
taken from the left it gives the usual $\bullet$-action, 
and taken from the right it gives the Tymoczko action. It follows immediately that these two actions commute. Let $W_P\subset W$ be the subgroup corresponding to $P\supset B$. By identifying $CH_T(G/P)$ as the $W_P$-invariant subring via the $\bullet$-action, the Tymoczko action  then restricts to an action on $CH_T(G/P)$.  From this interpretation, the latter naturally extends to any equivariant oriented cohomology theory via $\hh_T(G/B)\cong \hh_{G}(G/B\times G/B)$.

 As in the case of Chow groups, the $\odot$-action satisfies the following key property (see Theorem~\ref{thm:main}).

\begin{quote}
All parabolic Bott-Samelson classes $\zeta_{I_w}^J$ in $\hh_T(G/P)$ can be obtained as $Y_{I_w}\odot [\pt_J]$.
\end{quote}
Using basic properties of the `$\odot$'-action (proven in \S\ref{subset:two}-\ref{heckeactions}), we provide several explicit formulas (see \S\ref{subsec:BS} and \S\ref{subsec:recform}), which give previously know results for Chow groups (Example~\ref{ex:Chowformula}), and new results for $K$-theory (Example~\ref{ex:formulaK}).

The $\odot$-action turns out to be a natural tool to generalize some of Deodhar's fundamental work in  \cite{D87} to oriented cohomology theories; 
this is discussed in the second part of the paper.

For instance, using only the basic properties of the `$\odot$'-action, one can generalize the Deodhar acyclic complex for the Iwahori-Hecke algebra \cite[\S5]{D87} (see \S\ref{subsec:deod}).

In section~\ref{sec:Deodmult} we give a new interpretation of Deodhar's modules over Iwahori-Hecke algebras in terms of the equivariant $K$-theory of flag varieties. All the results stated here are known, but we provide proofs in the new setup (using the '$\odot$'-action), which give new insight and are sometimes simpler.  Moreover, the basis  in Deodhar's modules discussed in \S\ref{sec:Deodmult} is closely related to the  {\em stable basis} in the $K$-theory of the Springer resolution defined by Maulik and Okounkov~\cite{MO}; an investigation of the latter is contained in \cite{SZZ}. 

In the last section we give new interpretation of  the Kazhdan-Lusztig (KL) theory in the context of oriented cohomology corresponding to a $2$-parameter Todd genus 
(e.g., multiplicative and generic hyperbolic). Most of the results here are new.
As a major application, we define the parabolic KL-Schubert classes (see \S\ref{subsec:parKL}) which do not depend on the choice of reduced decompositions. We discuss their functorial properties (see \S\ref{subsec:functor}), and  recover classical relationship between Kazhdan-Lusztig basis and Deodhar's parabolic version in \ref{sec:paraBorel}.  We state several conjectures (smoothness \S\ref{subsec:smooth} and positivity \S\ref{subsec:positive}), and prove some special cases. Our main motivation was Conjecture~\ref{smoothconj}, which says  that if the Schubert variety $X(w)$ in $G/P$ is smooth, then the  Schubert class $[X(w)]$ whose restriction formula is given in {\rm \eqref{smooth}} coincides with the normalized parabolic KL-Schubert class indexed by $w$. In~\S\ref{subsec:projective}
we compute the parabolic KL-Schubert classes and prove this conjecture for projective spaces.

\medskip

\noindent{\bf Acknowledgments.}
We thank William Graham and Samuel Evens for valuable input.

C.L. was partially supported by the NSF grant DMS--1362627. K.Z. was partially supported by the NSERC Discovery Grant RGPIN-2015-04469; K.Z. is also grateful to I.H.E.S. and Universit\'e Paris 13 for hospitality and support. C.Z. was partially supported by the Simons Foundation and
by the Mathematisches Forschungsinstitut Oberwolfach (MFO) under the program of Simons Visiting Professorship, and also supported by  Marc Levine during a research stay at the University of Duisburg-Essen.

%%%%%%%%%%%%%%%%%%%%%%%%%%%
\section{Preliminaries and notation}

\subsection{The root datum} 
We consider a semi-simple root datum $(\La,\Sigma,\Sigma\hookrightarrow\La^\vee)$ following \cite[Exp.~XXI]{SGA} (cf. \cite[\S2]{CZZ}). Here $\La$ is a free abelian group of finite rank, $\La^\vee$ is its dual, $\Sigma$ is a finite non-empty subset of $\La$, called the set of roots and $\Sigma\hookrightarrow \La ^\vee$, $\al\mapsto \al^\vee$ is an embedding satisfying standard axioms. The subgroup $\La_r$ generated by $\Sigma$ is called the root lattice, and the subgroup \[\La_w=\{\la\in \La\otimes_\Z\mathbb{Q}\mid \langle \la, \al^\vee\rangle\in \Z \text{ for all }\al\in \Sigma\}\] is called the weight lattice. By definition, we have $\La_r\subseteq \La\subseteq \La_w$. 

The root lattice $\La_r$ has a $\Z$-basis $\Pi=\{\al_1,\ldots,\al_n\}$, called the set of simple roots. Here $n$ is the rank of the root datum. Any root $\al\in \Sigma$ can be written as a linear combination of simple roots with coefficients either all positive or all negative. So we have a decomposition $\Sigma=\Sigma^+\amalg \Sigma^-$  into positive and negative roots. 

For any $\al\in \Sigma$, the $\Z$-linear operator \[s_\al\colon \La\to \La,\quad \la\mapsto \la-\langle \la, \al^\vee\rangle \al\] is called a reflection. The Weyl group $W$ of the root datum is generated by $s_\al$, $\al\in \Sigma$; it is also generated by simple reflections $s_i$, $i\in [n]$, where $s_i:=s_{\al_i}$ and $[n]=1,\ldots,n$.  Sometimes we abuse notation and write $\Pi$ instead of $[n]$. The length of $w\in W$ is denoted by $\ell(w)$, and if $w=s_{i_1}\cdots s_{i_l}$ is a reduced decomposition into a product of simple reflections of length $l$, we say that $I_w:=(i_1,\ldots,i_l)$ is a reduced sequence for $w$. The longest element of $W$ with respect to the length is denoted by $w_\circ$.

Let $J\subseteq [n]$ and let $W_J$ be the subgroup of $W$ generated by $s_i$,  $i\in J$. Then $W_\emptyset=\{e\}$ is the trivial group and $W_{[n]}=W$.  We denote by $W^J=\{v\in W\mid\ell(vs_i)> \ell(v) \text{ for all }i\in J\}$ the subset of minimal left coset representatives of $W/W_J$. We say that a set of reduced sequences $\{I_w\}_{w\in W}$ is $J$-compatible if $w=uv$, $u\in W^J$, $v\in W_J$ implies that $I_w$ is the concatenation of $I_u$ and $I_v$. We set \[\Sigma_J=\{\al\in \Sigma\mid s_\al\in W_J\},\quad \Sigma^+_J=\Sigma_J\cap \Sigma^+\;\text{ and }\; \Sigma^-_J=\Sigma_J\cap \Sigma^-.\] 

We will extensively use the following classical fact, sometimes implicitly.
\begin{lem}\cite[Lemma~2.1]{D87}\label{lem:parw} For any $i\in [n]$ and $v\in W^J$, exactly one of the following three possibilities occurs:
\begin{enumerate}
\item[{\rm (i)}] $\ell(s_iv)<\ell(v)$, in which case $s_iv\in W^J$ as well;
\item[{\rm (ii)}] $\ell(s_iv)>\ell(v)$ and $s_iv\in W^J$;
\item[{\rm (iii)}] $\ell(s_iv)>\ell(v)$ and $s_iv\not\in W^J$. In this case $s_iv=vs_j$ for some $j\in J$. 
\end{enumerate}
\end{lem}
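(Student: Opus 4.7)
The plan is to translate the three cases into statements about the action of $v$ on simple roots, exploiting two standard characterizations: first, $w\in W^J$ if and only if $w(\al_j)\in\Sigma^+$ for every $j\in J$; second, $\ell(s_iw)>\ell(w)$ if and only if $w^{-1}(\al_i)\in\Sigma^+$. Together with the fact that $s_i$ sends $\al_i$ to $-\al_i$ and permutes $\Sigma^+\setminus\{\al_i\}$, these should reduce the whole statement to elementary book-keeping on roots.

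First I would observe that $\ell(s_iv)$ differs from $\ell(v)$ by exactly $1$, so the dichotomy $\ell(s_iv)<\ell(v)$ versus $\ell(s_iv)>\ell(v)$ already splits the possibilities into (i) and (ii)$\cup$(iii); it only remains to decide when $s_iv\in W^J$. For case (i), suppose $\ell(s_iv)<\ell(v)$, so that $v^{-1}(\al_i)\in\Sigma^-$. For any $j\in J$, $v(\al_j)\in\Sigma^+$ since $v\in W^J$, and $(s_iv)(\al_j)=s_i(v(\al_j))$ lies in $\Sigma^-$ only if $v(\al_j)=\al_i$; but then $v^{-1}(\al_i)=\al_j\in\Sigma^+$, contradicting the length assumption. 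Hence $s_iv\in W^J$, which is case (i).

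For cases (ii) and (iii), assume $\ell(s_iv)>\ell(v)$, i.e.\ $v^{-1}(\al_i)\in\Sigma^+$. Again for $j\in J$, $v(\al_j)\in\Sigma^+$, and $s_i(v(\al_j))\in\Sigma^-$ iff $v(\al_j)=\al_i$. So $s_iv\notin W^J$ precisely when there exists $j\in J$ with $v(\al_j)=\al_i$; such a $j$ is unique because $v$ is injective. When no such $j$ exists, we are in case (ii); otherwise we are in case (iii), and from $v(\al_j)=\al_i$ we get $vs_jv^{-1}=s_{v(\al_j)}=s_i$, whence $s_iv=vs_j$, as required.

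I do not foresee a genuine obstacle; the only subtle point is making sure that the three cases are genuinely exhaustive and mutually exclusive, which follows from the length dichotomy together with the sharp characterization of when $s_iv\notin W^J$. The proof hinges entirely on the two root-theoretic characterizations above, which are standard facts about Coxeter groups (see e.g.\ Humphreys' book on reflection groups), and on the identity $vs_jv^{-1}=s_{v(\al_j)}$.
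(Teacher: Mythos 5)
Your proof is correct: the two characterizations you invoke (namely $\ell(s_iv)>\ell(v) \Leftrightarrow v^{-1}(\alpha_i)\in\Sigma^+$, and $v\in W^J \Leftrightarrow v(\alpha_j)\in\Sigma^+$ for all $j\in J$) are standard, and the bookkeeping with $s_i$ permuting $\Sigma^+\setminus\{\alpha_i\}$, together with $vs_jv^{-1}=s_{v(\alpha_j)}$, cleanly yields all three cases and their exhaustiveness. The paper itself does not prove this lemma but simply cites it as \cite[Lemma~2.1]{D87}; your argument is essentially the textbook proof one finds there and in standard references on Coxeter groups, so there is no genuine divergence to compare.
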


Let $w_J\in W_J$ denote the longest element of $W_J$, so $w_{[n]}=w_\circ$. For a subset $J'\subset J$, we set $\Sigma^+_{J/J'}=\Sigma^+_J\backslash \Sigma^+_{J'}$, $\Sigma^-_{J/J'}=\Sigma^-_{J}\backslash \Sigma^-_{J'}$ and we denote by $w_{J/J'}$ the longest minimal coset representative of $W_J/W_{J'}$, so $\ell(w_{J/J'})=\ell(w_J)-\ell(w_{J'})$.

\subsection{Formal group laws} 
Following \cite[IV.2]{Si09}, a one-parameter commutative {\em formal group law (FGL)} $F$ over a commutative unital ring $\Rr$ is a formal power series $F(x,y)\in \Rr[[x,y]]$  that satisfies $F(x,F(y,z))=F(F(x,y),z)$, $F(x,0)=x$ and $F(x,y)=F(y,x)$. It can be written as \[F(x,y)=x+y+\sum_{i,j\ge 1}a_{ij}x^iy^j \quad \text{with}\;a_{ij}\in \Rr.\]  Basic examples are the additive FGL  $F_a(x,y)=x+y$, the multiplicative FGL $F_m(x,y)=x+y-xy$ and the universal FGL where the coefficient ring $\Rr$ is the Lazard ring.

\subsection{Hyperbolic formal group laws}
Consider an elliptic curve given in Weierstrass coordinates by the equation \[ y^2+\mu_1 xy=x^3+\mu_2 x^2 \] over $\Rr=\Z[\mu_1,\mu_2]$. Observe that it is a generic equation for a singular elliptic curve (e.g. see the proof of \cite[III,~Proposition~2.5]{Si09}). The associated formal group law is given by
\begin{equation}\label{hypfgl}
F_{\mu_1,\mu_2}(x,y)=\tfrac{x+y-\mu_1 xy}{1+\mu_2 xy}.
\end{equation}
Such a formal group law will be called hyperbolic (cf.~\cite[\S2.2]{LZ} and \cite[Example 2.2]{CZZ1}).

One of the key results of cobordism theory says that the following are in bijection: formal group laws over $\Rr$ and {\em Hirzebruch genera} with values in $\Rr$. The FGL $F_{\mu_1,\mu_2}$ corresponds to the 2-parameter Todd genus $T_{\al,\be}$, where $\mu_1=\al+\be$ and $\mu_2=-\al\be$. The latter was introduced and studied in~\cite{Kr74} (see also~\cite{BH90}). The particular cases of $T_{\al,\be}$ are the celebrated Hirzebruch genera: the Todd genus $\mu_2=0$, the signature $\mu_1=0$, and the Eulerian characteristic $\al=\be$. Observe that the only singular point of our curve is a cusp in the Eulerian case, and it is a node otherwise.

The fundamental result by Quillen and Levine-Morel~\cite{LM} establishes a correspondence between FGLs and algebraic oriented cohomology theories. If $\hh$ denotes the oriented cohomology theory corresponding to $F_{\mu_1,\mu_2}$ with the coefficient ring $\Rr=\hh(\pt)$, then the Chow group $\hh=CH$ (or usual singular cohomology $H(X,\mathcal{O}_X)$ over $\mathbb{C}$) corresponds to $F_a=F_{0,0}$, and the $K$-theory $\hh=K_0$ corresponds to $F_m=F_{1,0}$. 

\subsection{The generic hyperbolic formal group law}
In the present paper we will deal with the generic version of $F_{\mu_1,\mu_2}$, that is the formal group law $F_t=F_{\mu_1,\mu_2}$ over the coefficient ring $\Rr=\Z[t,t^{-1},(t+t^{-1})^{-1}]$, $t\neq 0$ parametrized as $\al=\tfrac{t}{t+t^{-1}}$ and $\be=\tfrac{t^{-1}}{t+t^{-1}}$, hence
\begin{equation}\label{thypfgl}
\mu_1=1 \text{ and }\mu_2=-\tfrac{1}{(t+t^{-1})^2}\,;
\end{equation}
for simplicity, we denote $u:=-\mu_2=(t+t^{-1})^{-2}$. Such a formal group law together with the respective cohomology theory will be called {\em generic hyperbolic}.

Observe that as $t\to 1$, $F_t(x,y)\to F_1(x,y)=\tfrac{x+y-xy}{1-xy/4}$ over $\Z\left[\tfrac{1}{2}\right]$, which corresponds to the Eulerian genus. If $t\to 0$, then $\mu_2\to 0$, so $F_t(x,y) \to x+y-xy$ over $\Z$, which corresponds to the Todd genus ($K$-theory). The latter suggests that the cohomology corresponding to $F_t$ shares more common properties with $K$-theory than with the Chow group or singular cohomology.

\subsection{Equivariant oriented cohomology}
Let $G$ be a split semi-simple linear algebraic group over a field $k$ of characteristic~$0$ with a split maximal torus $T$ contained in a Borel subgroup~$B$. Consider the associated $T$-equivariant oriented cohomology $\hh_T$ defined on the category of smooth algebraic $T$-varieties following~\cite{HM13} (see also~\cite[\S2]{CZZ2}). By the main result of~\cite{CZZ2}, the $T$-equivariant cohomology ring $\hh_T(G/B)$ of the variety of Borel subgroups $G/B$ can be identified with the dual $\DF^\star$ of the so-called {\em formal affine Demazure algebra} $\DF$. Moreover, in the generic hyperbolic case $F=F_t$, the algebra $\DF$ contains the classical {\em Iwahori-Hecke algebra} $H$ (see~\cite[\S9]{CZZ1} and~\cite{Le15}). We briefly recall the definition of the algebra $\DF$ and the details of the above identification.

\subsection{Twisted group algebras}
First, we define the {\em formal group algebra}
\[
\Ss=\Rr[[\La]]_F:=\Rr[[x_\la]]_{\la\in \La}/(x_0,\; x_{\la+\nu}-F(x_{\la},x_{\nu}))\,,
\]
cf. \cite[Definition~2.4]{CPZ}; we have $\Ss\simeq \hh_T(\pt)$. Let $\Qq$ be the localization of $\Ss$ at all roots, i.e. $\Qq=\Ss\left[\tfrac{1}{x_\al}\mid \al\in \Sigma\right]$. The action of $W$ on $\La$ extends to actions on $\Ss$ and on $\Qq$. Let $\SW$ and $\QW$ denote the respective twisted group algebras of $W$ spanned by $\de_w$, $w\in W$, that is
\[
\SW=\Ss \rtimes_{\Rr} \Rr[W]\text{ and } \QW=\Qq\rtimes_{\Rr} \Rr[W]
\]
and multiplication is given by $\de_w q=w(q)\de_w$, $q\in \Qq$, $w\in W$; we view $\Ss\subset \SW, \Qq\subset \QW$ via the map $q\mapsto q\de_e$.

\subsection{Formal affine Demazure algebras}
Let $\DF$ be the subalgebra of $\QW$  generated by $\Ss$ and by the {\em push-pull elements} $Y_\al=\tfrac{1}{x_{-\al}}+\tfrac{1}{x_\al}\de_{s_\al}$ (or, equivalently, by the {\em Demazure elements} $X_\al=\tfrac{1}{x_\al}\de_{s_\al}-\tfrac{1}{x_\al})$ for all simple roots $\al$. We denote $X_i=X_{\al_i}$ and $Y_i=Y_{\al_i}$. Observe that for the Demazure elements we use the same sign as in \cite[Example~2.3]{LZ}. There is a canonical action of $\QW$ on $\Qq$, defined by 
\begin{equation}\label{actdfs}
(q\de_w) \cdot p=qw(p), ~p,q\in \Qq, w\in W\,,
\end{equation}
which reduces to an action of $\DF$ on $\Ss$. 

If $I_w=(i_1,\ldots,i_k)$ is a reduced sequence of $w$, i.e. $w=s_{i_1}s_{i_2}\ldots s_{i_k}$, denote $I_w^{-1}=(i_k,\ldots,i_1)$, and set $X_{I_w}=X_{i_1}X_{i_2}\cdots X_{i_k}$, $Y_{I_w}=Y_{i_1}Y_{i_2}\cdots Y_{i_k}$. These products depend on the choice of $I_w$ unless $F$ is the additive or a multiplicative formal group law.  By the results of \cite{HMSZ} and \cite{CZZ}, the elements $\{Y_{I_w}\}_{w\in W}$ (resp. $\{X_{I_w}\}_{w\in W}$) form an $\Ss$-basis of $\DF$. Moreover, we know the complete set of relations in $\DF$ (see \cite[Proposition~5.8]{HMSZ} for a general $F$ and \cite[Example~4.12]{Le15} in the hyperbolic case). For instance, the quadratic relations
\begin{equation}
\label{eq:square}X_\al^2=-\kappa_\al X_\al, \quad Y_\al^2=\kappa_\al Y_\al\,, \quad \text{where }\kappa_\al=\tfrac{1}{x_\al}+\tfrac{1}{x_{-\al}}\in \Ss\,,
\end{equation}
will play an essential role in the sequel.

\subsection{The moment map}
Finally, consider the left $\Ss$-module and the left $\Qq$-module duals 
\[
\SWd =Hom_\Ss(\SW,\Ss), \quad \QWd =Hom_\Qq(\QW,\Qq)\; \text{ and }\;\DFd:=Hom_{\Ss}(\DF, \Ss).
\]
Let $\{f_v\}_{v\in W}$ be the  canonical bases of $\SWd$ and $\QWd$, i.e., $f_v(w)=\de_{v,w}$, $v,w\in W$. There is a coordinate-wise product on $\QWd$, defined by $(pf_v)(qf_w)=pq\de_{v,w}f_{v}$ where the multiplicative identity is given by $\unit=\sum_{v\in W}f_v$. The ring $\QWd$ is isomorphic to  localization of the cohomology of the $T$-fixed points set \[ \SWd=Hom(W,\Ss)\cong \hh_T(W) \] and  $\DFd\cong \hh_T(G/B)$ by \cite[Theorem 8.11]{CZZ2}. Under these isomorphisms the natural embedding $\DF^\star \hookrightarrow \SWd \hookrightarrow \QWd$ coincides with the so-called moment map (see \cite[\S10]{CZZ1}). 

\subsection{The parabolic case}
In the present paper we will mostly deal with the parabolic version of the above construction (see \cite{CZZ1} and \cite{CZZ2} for details). That is, for any $J\subset [n]$ we consider a natural projection $p_J\colon \QW \to \Qq_{W/W_J}$,  $\de_w\mapsto \de_{\overline{w}}$, of \cite[\S11]{CZZ1}; here $\Qq_{W/W_J}$ is spanned by $\de_{\overline{w}}$ for $\overline{w}\in W/W_J$. Let $\DFP{J}$ denote the image of $\DF$ in $\Qq_{W/W_J}$. Observe that there is a coproduct on $\Qq_{W/W_J}$ which restricts to $\DFP{J}$. Let $\DFPd{J}=Hom_S(\DFP{J}, \Ss)$ be the dual. By \cite[Lemma~15.1]{CZZ1}, the embedding $p_J^\star\colon \DFPd{J}\hookrightarrow \DFd$ identifies $\DFPd{J}$ with $(\DFd)^{W_J}$, where $W_J$ acts on $\DFd$ via the Hecke $\bullet$-action (see below). Finally, the main result of \cite{CZZ2} says that $\DFPd{J}$ is isomorphic to $\hh_T(G/P_J)$, where $P_J$ is a standard parabolic subgroup corresponding to $J$ and $G/P_J$ is the respective projective homogeneous variety.

%%%%%%%%%%%%%%%%%%%%%%%%
\section{Two Hecke actions}\label{sec:twac}
In this section we give the central definition of this paper, i.e., the right Hecke action. 
\subsection{Left and right Hecke actions}\label{subset:two}
We first recall several definitions and facts.
From \cite[Lemma 10.3]{CZZ1} and \cite[Corollary 6.4]{CZZ2} the class of the identity $T$-fixed point in $\DF^\star\cong \hh_T(G/B)$ is given by 
\begin{equation}\label{xpi}
\pt=x_\Pi f_e, \text{ where }x_\Pi=\prod_{\al\in \Sigma^-}x_{\al}\,.
\end{equation}

Following \cite[\S13]{CZZ1} there is an anti-involution $\iota$ of $\QW$ defined by
\begin{equation}\label{eq:iotaonQW}
\iota(q\de_w)=\de_{w^{-1}}q\tfrac{w(x_\Pi)}{x_\Pi}.
\end{equation}
Its restriction to $\Qq$ is the identity map, and it restricts to an involution of $\DF$ that maps $X_i$ to $X_i$  (and hence $Y_i$ to $Y_i$). Thus, we have $\iota(X_I)=X_{I^{-1}}$ and $\iota(Y_I)=Y_{I^{-1}}$, where $I^{-1}$ is the sequence obtained from $I$ by reversing the order.  

Following \cite[\S4]{CZZ1} there is an action of $\QW$ on $\Qq$, defined by 
\[
(z\bullet f)(z')=f(z'z), \quad z,z'\in \QW, f\in \QWd.
\]
From \cite[Corollary 12.1]{CZZ1}, it restricts to an action of $\DF$ on $\DFd$.  Moreover, we have the following result.

\begin{thm} \cite[Theorem 10.13]{CZZ1} \label{thm:DcFdbasis} Via the $\bullet$-action,  $\DFd$ (resp. $\QWd$) is a free $\DF$-module (resp. $\QW$) of rank $1$, and $\pt$ is their basis, i.e., 
\[
\DFd=\DF\bullet \pt, \quad \QWd=\QW\bullet \pt.
\]
\end{thm}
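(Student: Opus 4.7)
The plan reduces both parts to an explicit computation of $(q\delta_w) \bullet \pt$ in $\QWd$. Using $\pt = x_\Pi f_e$, the definition $(z\bullet f)(z') = f(z'z)$, the multiplication rule $\delta_{w'}q = w'(q)\delta_{w'}$ in $\QW$, and the $\Qq$-linearity of $f_e$, I obtain
\[
(q\delta_w)\bullet \pt \;=\; x_\Pi\, w^{-1}(q)\, f_{w^{-1}}\,.
\]
For the $\QW$-statement this almost finishes everything. Surjectivity $\QW\bullet \pt = \QWd$ follows by choosing $w = v^{-1}$ and $q = v^{-1}(x_\Pi^{-1})$ to realize any $f_v$. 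Injectivity: expanding $z = \sum_w q_w\delta_w$, one has $z\bullet \pt = \sum_w x_\Pi w^{-1}(q_w) f_{w^{-1}}$, which vanishes only when each $q_w = 0$, since $w^{-1}$ is an automorphism of $\Qq$ and the $f_{w^{-1}}$ are $\Qq$-independent. Hence $\pt$ is a rank-one $\QW$-basis of $\QWd$.

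For the $\DF$-statement, the inclusion $\DF \bullet \pt \subseteq \DFd$ is immediate: $\pt \in \DFd$ by~\eqref{xpi}, and the $\bullet$-action of $\QW$ on $\QWd$ restricts to an action of $\DF$ on $\DFd$ by \cite[Cor.~12.1]{CZZ1}. For the reverse inclusion I would use the $\Ss$-basis $\{Y_{I_w}\}_{w\in W}$ of $\DF$ and set $\zeta_w := Y_{I_w}\bullet \pt \in \DFd$. The $\Ss$-linear independence of $\{\zeta_w\}$ is inherited from the $\QW$-injectivity above. To show that $\{\zeta_w\}$ spans $\DFd$ over $\Ss$, I would expand each $Y_\al = x_{-\al}^{-1} + x_\al^{-1}\delta_{s_\al}$ using the explicit formula above and induct on $\ell(w)$ to obtain $\zeta_w = \sum_v c_{w,v} f_v$ with $c_{w,v} \in \Ss$ such that the matrix $(c_{w,v})$ is Bruhat-triangular with diagonal entries nonzero monomials in roots. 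Combined with the fact that $\DFd$ is itself a free $\Ss$-module of rank $|W|$ from the equivariant cellular decomposition of $G/B$, this triangular expansion forces $\{\zeta_w\}$ to be an $\Ss$-basis of $\DFd$, yielding $\DFd \subseteq \DF \bullet \pt$. Rank-one freeness of $\DFd$ over $\DF$ via $\bullet$ then follows from the $\QW$-injectivity.

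The main obstacle is the Bruhat-triangularity of $\zeta_w$: one must carefully track how the two summands $x_{-\al}^{-1}$ and $x_\al^{-1}\delta_{s_\al}$ of $Y_\al$ act on the existing support $\{f_v\}$ under $\bullet$. This is the formal-group-law analogue of Demazure's classical divided-difference calculation, and the case analysis on whether $s_\al v > v$ or $s_\al v < v$ must be matched to the exchange relations along a reduced word for $w$; this is where most of the technical work lies.
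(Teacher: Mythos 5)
Your reduction of the $\QW$-statement to the formula $(q\de_w)\bullet\pt = x_\Pi\,w^{-1}(q)\,f_{w^{-1}}$ is correct, and the surjectivity and injectivity over $\Qq$ both follow immediately once you note that $x_\Pi$ is a unit in $\Qq$ and $w^{-1}$ is a ring automorphism; this is precisely \eqref{eq:zpoint} in the paper. Since the paper black-boxes this as a citation to [Theorem~10.13]{CZZ1}, what remains to evaluate is whether your argument for the $\DF$-part is sound, and there I see a genuine gap in the spanning step.

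The issue is that Bruhat triangularity of the matrix $(c_{w,v})$ in $\zeta_w = \sum_v c_{w,v}f_v$, with diagonal entries that are \emph{nonzero but non-invertible} elements of $\Ss$ (products of roots $x_\al$, which are not units in the formal group algebra), proves $\Ss$-linear independence but not $\Ss$-spanning. Two free $\Ss$-submodules of the same finite rank, one inside the other, need not coincide: over $\Z$, the rows $(2,0)$ and $(1,3)$ are triangular with nonzero diagonal and span a rank-$2$ submodule of $\Z^2$ which is not all of $\Z^2$. Moreover, your triangular expansion is written in the basis $\{f_v\}$, which is an $\Ss$-basis of $\SWd$, not of $\DFd \subsetneq \SWd$; so even an invertible determinant would not show that the $\zeta_w$ span $\DFd$. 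Thus ``free of rank $|W|$ plus triangular expansion'' does not force $\{\zeta_w\}$ to be an $\Ss$-basis of $\DFd$. What is missing is some control over the determinant relative to an honest $\Ss$-basis of $\DFd$ itself: for example, matching the triangular diagonal entries of $\zeta_w$ in the $f_v$-coordinates against those of the dual basis $\{Y_{I_w}^*\}$ of $\DFd$ and showing the change of basis is unipotent, or invoking a Poincar\'e-type pairing on $\DFd$ under which $\{\zeta_w\}$ pairs unimodularly with a second family. This is essentially what the referenced [Theorem~10.13]{CZZ1} supplies, and it is the genuinely nontrivial part of the statement; your proposal currently treats it as an automatic consequence of triangularity.

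A secondary remark: you cite ``the equivariant cellular decomposition of $G/B$'' to get $\Ss$-freeness of rank $|W|$ for $\DFd$; at this point in the theory, the comparison $\DFd\cong\hh_T(G/B)$ is not yet available (it is itself the main theorem of \cite{CZZ2}, which quotes Theorem~\ref{thm:DcFdbasis} as an input). Fortunately the same freeness follows purely algebraically because $\DF$ is a free $\Ss$-module of rank $|W|$ with basis $\{Y_{I_w}\}$, so $\DFd=\mathrm{Hom}_\Ss(\DF,\Ss)$ is automatically free of the same rank; rephrasing this way avoids any appearance of circularity.
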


So the elements of  $\DFd$ (resp. $\QWd$) can be uniquely written as $z\bullet \pt$ with $z\in \DF$ (resp. $z\in \QW$). For example, it follows from  \cite[Lemma 4.2]{CZZ1} that 
\begin{equation}\label{eq:zpoint}
qf_v=v^{-1}\left(\tfrac{q}{x_\Pi}\right)\de_{v^{-1}}\bullet \pt, \quad q\in \Qq. 
\end{equation}

We now introduce a key notion of the present paper. 

\begin{dfn}Consider a right action of $\QW$ on $\QWd$, defined as follows:
\begin{gather*}
(z'\bullet \pt)\star z:=z'z\bullet \pt, \quad z, z'\in \QW.
\end{gather*}
Observe that it restricts to a right action of $\DF$ on $\DFd$. Moreover, if composed with the involution $\iota$, it gives left actions of $\DF$ on $\DFd$ and $\QW$ on  $\QWd$, respectively, which will be denoted by $\odot$. That is, 
\begin{equation}\label{eq:odot}
z\odot (z'\bullet pt):=(z'\bullet \pt)\star \iota(z)=z'\iota(z)\bullet \pt, \quad z, z' \text{ belongs to } \DF \text{ or } \QW.
\end{equation}
We call $\bullet$ the {\em left Hecke action} and  $\odot$ the {\em right Hecke action}, respectively. 
\end{dfn}

By definition the left Hecke action is $\Qq$-linear, but the right Hecke action is not. 
\begin{rem} (i) The $\odot$-action depends on the choice of a class of a $T$-fixed point $\pt$ of $\DFd$. If one chooses a class of a different $T$-fixed point from \cite[Theorem~10.13]{CZZ1}, one gets a different version of the $\odot$-action. 

(ii) The terminology is slightly misleading as both the left and right Hecke actions are both left actions (which will be implicitly used throughout). Indeed, both actions are induced by the multiplication on $\DF$: the left Hecke action comes from multiplication from the left in $\DF$, and the right Hecke action comes from multiplication from the right. From the geometric point of view, there is an isomorphism $\hh_T(G/B)\cong \hh_G(G/B\times G/B)$, where the right hand side has a convolution product that defines the product in $\DF$.  So convolution from the left gives the $\bullet$-action, and convolution from the right gives the $\odot$-action. 
\end{rem}

\begin{lem}\label{actcommute} We have
\begin{enumerate}
\item[{\rm (i)}] The left and right Hecke actions commute with each other. 
\item[{\rm (ii)}]  $z\odot\pt=\iota(z)\bullet \pt$. In particular, $X_I\odot \pt=X_{I^{-1}}\bullet \pt$ and $Y_I\odot \pt=Y_{I^{-1}}\bullet \pt$.
\item[{\rm (iii)}] Viewing $\DFd$ as a left $\DF$-module via the $\bullet$-action, there is an isomorphism of rings \[\pi\colon \DF\to \End_{\DF}(\DFd), \quad\pi(z)\colon f\mapsto z\odot f.\]
\end{enumerate}
\end{lem}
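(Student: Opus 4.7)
The plan is to reduce everything to the free-module structure of $\DFd$ (resp.\ $\QWd$) over $\DF$ (resp.\ $\QW$) given by Theorem~\ref{thm:DcFdbasis}, together with associativity in $\QW$ and the fact that $\iota$ is an anti-involution. Throughout, I will write an arbitrary $f \in \DFd$ as $f = z' \bullet \pt$ for a unique $z' \in \DF$.

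For part~(i), I would take $z_1, z_2 \in \DF$ and compute both sides on such an $f$. Using the definition \eqref{eq:odot},
\[
z_2 \odot (z_1 \bullet f) = z_2 \odot (z_1 z' \bullet \pt) = z_1 z' \iota(z_2) \bullet \pt,
\]
while
\[
z_1 \bullet (z_2 \odot f) = z_1 \bullet (z' \iota(z_2) \bullet \pt) = z_1 z' \iota(z_2) \bullet \pt.
\]
The equality is just associativity in $\DF$; the same argument works for $\QW$.

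For part~(ii), setting $z' = 1$ in the definition \eqref{eq:odot} gives $z \odot \pt = \iota(z) \bullet \pt$ directly. The identities $X_I \odot \pt = X_{I^{-1}} \bullet \pt$ and $Y_I \odot \pt = Y_{I^{-1}} \bullet \pt$ then follow from the previously recorded facts $\iota(X_I) = X_{I^{-1}}$ and $\iota(Y_I) = Y_{I^{-1}}$.

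For part~(iii), well-definedness of $\pi(z)$ as a $\DF$-endomorphism of $\DFd$ with respect to the $\bullet$-action is precisely part~(i). To see $\pi$ is a ring map, compute on $f = z' \bullet \pt$:
\[
\pi(z_1 z_2)(f) = z' \iota(z_1 z_2) \bullet \pt = z' \iota(z_2) \iota(z_1) \bullet \pt = \pi(z_1)\bigl(\pi(z_2)(f)\bigr),
\]
using that $\iota$ is an anti-involution, and $\pi(1) = \mathrm{id}$ is immediate. Injectivity follows because $\pi(z) = 0$ forces $\iota(z) \bullet \pt = z \odot \pt = 0$, and then Theorem~\ref{thm:DcFdbasis} (freeness with basis $\pt$) together with $\iota$ being an involution gives $z = 0$. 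For surjectivity, given $\phi \in \End_\DF(\DFd)$, use the freeness to write $\phi(\pt) = z' \bullet \pt$ for a unique $z' \in \DF$, set $z := \iota(z')$, and check on a general $w \bullet \pt$ that $\phi(w \bullet \pt) = w z' \bullet \pt = w \iota(z) \bullet \pt = \pi(z)(w \bullet \pt)$, where the first equality uses $\DF$-linearity of $\phi$ with respect to $\bullet$.

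The only subtle point is keeping the left/right bookkeeping straight: the reversal in $\iota(z_1 z_2) = \iota(z_2)\iota(z_1)$ is exactly what converts the right-multiplication underlying $\odot$ into a genuine left action and makes $\pi$ a ring homomorphism rather than an anti-homomorphism. Everything else is formal once Theorem~\ref{thm:DcFdbasis} is in hand.
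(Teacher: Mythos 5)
Your proposal is correct and follows essentially the same route as the paper: reduce to uniqueness/freeness from Theorem~\ref{thm:DcFdbasis}, unwind the definition of $\odot$, and use that $\iota$ is an anti-involution to turn the right-multiplication into a left action and to force $\pi$ to be a ring homomorphism. The paper simply states parts (i) and (ii) as immediate from the definition and gives the same injectivity/surjectivity argument for (iii); you have merely written out the computations the paper leaves to the reader.
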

\begin{proof}(i) and (ii). Both follow from the definition of the $\odot$-action.

(iii). It follows from part (i) that $\pi$ is well defined. Since $\iota$ is an anti-isomorphism and the $\bullet$-action  is effective,  $\pi$ is injective. Moreover, given any $\phi\in \End_{\DF}(\DFd)$, by Theorem~\ref{thm:DcFdbasis}, $\phi$ is determined by $\phi(\pt)$, which can be written as $z\bullet \pt, z\in \DF$, then $\phi(\pt)=z\bullet \pt=\iota(z)\odot \pt=\pi(\iota(z))(\pt)$, so $\phi=\pi(\iota(z))$. Therefore, $\pi$ is an isomorphism.
\end{proof}

\begin{lem}\label{lem:Heckeactions}The two Hecke actions satisfy
\[
(p\de_w)\bullet (qf_v)=qvw^{-1}(p)f_{vw^{-1}}, ~(p\de_w)\odot (qf_v)=pw(q)f_{wv}, ~p,q\in \Qq, w,v\in W.
\]
\end{lem}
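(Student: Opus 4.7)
The proof is essentially a direct unfolding of the two definitions, so the plan is simply to execute these computations carefully.

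For the first identity, I would start from the definition of the left Hecke action, namely $(z\bullet f)(z')=f(z'z)$, and evaluate both sides on a basis element $\de_{w'}$. Pushing $\de_{w'}$ past the scalar $p$ and multiplying $\de_{w'}\de_w=\de_{w'w}$ in $\QW$ gives
\[
\big((p\de_w)\bullet (qf_v)\big)(\de_{w'}) = qf_v\big(w'(p)\de_{w'w}\big)=qw'(p)\,\de_{v,w'w}.
\]
This vanishes unless $w'=vw^{-1}$, which yields the stated formula $qvw^{-1}(p)f_{vw^{-1}}$.

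For the second identity, I would use \eqref{eq:zpoint} to rewrite the element $qf_v$ in the form $\beta\de_{v^{-1}}\bullet\pt$ with $\beta=v^{-1}(q/x_\Pi)$, so that the definition \eqref{eq:odot} of $\odot$ applies. By that definition,
\[
(p\de_w)\odot(qf_v) \;=\; \beta\de_{v^{-1}}\,\iota(p\de_w)\bullet\pt.
\]
Now I would plug in $\iota(p\de_w)=\de_{w^{-1}}\,p\,\tfrac{w(x_\Pi)}{x_\Pi}$ from \eqref{eq:iotaonQW} and simplify the resulting product in $\QW$, moving all $\de$'s to the right via $\de_u\, r=u(r)\de_u$. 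This collapses the $\de$-part to $\de_{(wv)^{-1}}$, with scalar factor
\[
v^{-1}\!\left(\tfrac{q}{x_\Pi}\right)\cdot(wv)^{-1}(p)\cdot\tfrac{v^{-1}(x_\Pi)}{(wv)^{-1}(x_\Pi)}.
\]
Finally, applying \eqref{eq:zpoint} in the reverse direction (i.e.\ converting $\alpha\de_{u^{-1}}\bullet\pt$ back into the form $x_\Pi\cdot u(\alpha)f_u$ with $u=wv$) and distributing $wv$ through the three factors above, I would watch the $x_\Pi$ denominators telescope so that all that survives is $pw(q)f_{wv}$.

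The only real obstacle is the bookkeeping with the factor $w(x_\Pi)/x_\Pi$ coming from $\iota$: one must track how it interacts with $v^{-1}$ and then with the outer $wv$ when going back to the $f_{wv}$ basis, and verify that the $x_\Pi$'s cancel to leave the clean expression. Once that cancellation is verified, both formulas drop out. I would not introduce any auxiliary machinery; the lemma is a bare identity in the twisted group algebra.
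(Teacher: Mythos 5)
Your proof is correct and follows the same route as the paper: for the second identity both you and the authors pass through \eqref{eq:zpoint}, apply \eqref{eq:odot} and \eqref{eq:iotaonQW}, collapse the $\de$'s, and convert back to the $f$-basis (the paper does the last step by invoking the first identity directly, you by inverting \eqref{eq:zpoint}, which is the same computation). The only cosmetic difference is that the paper cites an earlier lemma for the first identity while you reprove it by evaluating on $\de_{w'}$, which is also fine.
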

\begin{proof}The first identity is proved in \cite[Lemma 4.2]{CZZ1}. For the second one, we have
\begin{align*}
(p\de_w)\odot (qf_v)\overset{\eqref{eq:zpoint}}=&(p\de_w)\odot \left(v^{-1}\left(\tfrac{q}{x_\Pi}\right)\de_{v^{-1}}\bullet \pt\right)\overset{\eqref{eq:odot}}=\left(v^{-1}\left(\tfrac{q}{x_\Pi}\right)\de_{v^{-1}} \iota(p\de_w)\right)\bullet \pt\\
\overset{\eqref{eq:iotaonQW}}=&\left(v^{-1}\left(\tfrac{q}{x_\Pi}\right)\de_{v^{-1}}\de_{w^{-1}}p\tfrac{w(x_\Pi)}{x_\Pi}\right)\bullet \pt
\overset{\sharp_1}=\left(\de_{v^{-1}w^{-1}}\tfrac{pw(qx_\Pi)}{w(x_\Pi )x_\Pi}\right)\bullet (x_\Pi f_e)\\
\overset{\sharp_2}=&\de_{v^{-1}w^{-1}}\bullet (pw(q)f_e)=pw(q)f_{wv}\,,
\end{align*}
where $\sharp_1$ follows from the product in $\QW$, and $\sharp_2$ follows from the first identity of this lemma. 
\end{proof}

\begin{lem}
We have
\begin{itemize}
\item[{\rm (i)}]  $Y_\al\odot \left(\sum_{v}p_v f_v\right)=\sum_v \left(\tfrac{p_v}{x_{-\al}}+s_\al\left(\tfrac{p_{s_\al v}}{x_{-\al}}\right)\right)f_v$,
\item[{\rm (ii)}]  $X_\al\odot \left(\sum_{v}p_v f_v\right)=\sum_v\tfrac{s_\al(p_{s_\al v})-p_v}{x_\al}f_v$.
\end{itemize}
\end{lem}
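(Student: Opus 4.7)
The plan is to reduce both identities to the previous Lemma~\ref{lem:Heckeactions} by expanding $Y_\al$ and $X_\al$ in terms of the generators of $\QW$. Recall the defining formulas
\[
Y_\al=\tfrac{1}{x_{-\al}}\de_e+\tfrac{1}{x_\al}\de_{s_\al},\qquad X_\al=\tfrac{1}{x_\al}\de_{s_\al}-\tfrac{1}{x_\al}\de_e,
\]
and that the $\odot$-action is $\Rr$-linear (additive in both arguments, as it comes from left multiplication in $\DF$ via Lemma~\ref{actcommute}(iii)). So I would apply $\odot$ separately to each summand $p_vf_v$ and then each of the two terms in $Y_\al$ or $X_\al$.

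For part (i), using Lemma~\ref{lem:Heckeactions} with $w=e$ and $w=s_\al$ gives
\[
Y_\al\odot(p_vf_v)=\tfrac{p_v}{x_{-\al}}f_v+\tfrac{s_\al(p_v)}{x_\al}f_{s_\al v}.
\]
Summing over $v\in W$ and reindexing the second sum by $v\mapsto s_\al v$ (a bijection of $W$) yields
\[
Y_\al\odot\Bigl(\sum_{v}p_vf_v\Bigr)=\sum_v\Bigl(\tfrac{p_v}{x_{-\al}}+\tfrac{s_\al(p_{s_\al v})}{x_\al}\Bigr)f_v.
\]
The key small identity to finish is $s_\al(x_{-\al})=x_{s_\al(-\al)}=x_\al$, which immediately gives $\tfrac{s_\al(p_{s_\al v})}{x_\al}=s_\al\bigl(\tfrac{p_{s_\al v}}{x_{-\al}}\bigr)$, matching the claimed form.

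For part (ii), the identical strategy with $X_\al=\tfrac{1}{x_\al}\de_{s_\al}-\tfrac{1}{x_\al}\de_e$ gives
\[
X_\al\odot(p_vf_v)=\tfrac{s_\al(p_v)}{x_\al}f_{s_\al v}-\tfrac{p_v}{x_\al}f_v,
\]
and after the same reindexing $v\mapsto s_\al v$ in the first sum, the two sums collect coefficient-wise to $\tfrac{s_\al(p_{s_\al v})-p_v}{x_\al}f_v$, as claimed.

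No genuine obstacle is expected: the whole computation is bookkeeping from Lemma~\ref{lem:Heckeactions}. The only point requiring care is the reindexing $v\mapsto s_\al v$ in the $\de_{s_\al}$-term, and remembering to use $s_\al(x_{-\al})=x_\al$ when rewriting the sum in part (i) in the symmetric form stated by the lemma.
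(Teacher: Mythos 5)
Your proof is correct and matches the paper's approach: both expand $Y_\al$ (resp.\ $X_\al$) into $\tfrac{1}{x_{-\al}}\de_e+\tfrac{1}{x_\al}\de_{s_\al}$ (resp.\ $\tfrac{1}{x_\al}\de_{s_\al}-\tfrac{1}{x_\al}\de_e$) and apply the formula $(p\de_w)\odot(qf_v)=p\,w(q)\,f_{wv}$ from Lemma~\ref{lem:Heckeactions} term by term. The only difference is that you carry out the reindexing $v\mapsto s_\al v$ and the rewriting via $s_\al(x_{-\al})=x_\al$ explicitly, which the paper leaves implicit.
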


\begin{proof}
We just check the first identity. We have 
\[
Y_\al \odot pf_v=\left(\tfrac{1}{x_{-\al}}+\tfrac{1}{x_\al}\de_\al\right)\odot pf_v=\tfrac{p}{x_{-\al}}f_v+s_\al\left(\tfrac{p}{x_{-\al}}\right)f_{s_\al v}\,. \qedhere
\]
\end{proof}

\subsection{Hecke actions and the characteristic map}
We now look at the behavior of the two  Hecke actions with respect to the characteristic map. Consider the equivariant characteristic map \[c_\Ss\colon \Ss \to \DFd \simeq \hh_T(G/B)\text{ given by }s\mapsto s\bullet \unit\] (or, equivalently, by evaluation of the respective operator from $\DF$ at $s\in \Ss$, see \cite[\S11]{CZZ}). After tensoring $\DFd$ with the augmentation $\epsilon\colon \Ss\to \Rr$, it turns into the classical characteristic map of ordinary cohomology $c_\Rr\colon \Ss \to \epsilon\DFd\simeq \hh(G/B)$. Consider the Borel map 
\[
\rho\colon \Ss\otimes_{\Ss^W} \Ss \to \DFd\;\text{ given by }\;q_1\otimes q_2 \mapsto q_1c_S(q_2)=q_1(q_2\bullet \unit)=q_2\bullet (q_1\unit).
\] 
By \cite[Theorem~11.4]{CZZ} it is an isomorphism if the characteristic map $c_\Rr$ is surjective (for example, if the torsion index is $1$, or when $F=F_{1,0}$ is the multiplicative formal group law). 

Using the canonical action \eqref{actdfs} of $\DF$ on $\Ss$, we obtain the following result.

\begin{lem} For any $z\in \DF$, we have the following commutative diagrams.
\[
\xymatrix{\Ss\otimes_{\Ss^W} \Ss \ar[r]^-{\rho}\ar[d]_{(z\cdot\_)\otimes 1} & \DFd \ar[d]^{z\odot\_} && \Ss\otimes_{\Ss^W}\Ss\ar[r]^-{\rho}\ar[d]_{1\otimes (z\cdot\_)} & \DFd\ar[d]^{z\bullet\_}\\
\Ss\otimes_{\Ss^W}\Ss \ar[r]^-{\rho} & \DFd && \Ss\otimes_{\Ss^W}\Ss \ar[r]^-{\rho} & \DFd}
\]
\end{lem}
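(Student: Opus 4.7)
The plan is to verify each square by a short reduction followed by a direct check on the generators $z = q\de_w$ of $\QW \supset \DF$; the intermediate formulas live naturally in $\QWd$, and both Hecke actions restrict to $\DFd \subset \QWd$, so nothing is lost by computing there. The main computational tool in both cases is Lemma~\ref{lem:Heckeactions}, together with the presentation $\rho(q_1\otimes q_2) = q_1\cdot c_\Ss(q_2) = q_2\bullet(q_1\unit)$ already recorded before the statement. Since both squares are $\Rr$-linear in $z$, it suffices to verify them for $z = q\de_w$.

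For the right (left-Hecke) square, using the definition of $\rho$ one rewrites
\[
\rho(q_1\otimes z\cdot q_2) = q_1\cdot c_\Ss(z\cdot q_2),\qquad z\bullet\rho(q_1\otimes q_2) = z\bullet\bigl(q_1\cdot c_\Ss(q_2)\bigr).
\]
Because the $\bullet$-action is $\Qq$-linear, the scalar $q_1$ pulls past $z\bullet$, and the square collapses to the single identity $c_\Ss(z\cdot q_2) = z\bullet c_\Ss(q_2)$, namely the $\DF$-equivariance of the characteristic map with respect to~\eqref{actdfs} on the source and the $\bullet$-action on the target. For $z = q\de_w$, Lemma~\ref{lem:Heckeactions} together with $c_\Ss(p) = \sum_v v(p)f_v$ shows that both sides equal $\sum_v v(q)\,vw(q_2)\,f_v$ after the reindexing $v\mapsto vw^{-1}$.

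For the left ($\odot$) square, I exploit the commutativity of the two Hecke actions (Lemma~\ref{actcommute}(i)) to compute
\[
z\odot\rho(q_1\otimes q_2) = z\odot\bigl(q_2\bullet(q_1\unit)\bigr) = q_2\bullet\bigl(z\odot(q_1\unit)\bigr),
\]
while $\rho((z\cdot q_1)\otimes q_2) = q_2\bullet\bigl((z\cdot q_1)\unit\bigr)$. The square therefore reduces to the identity $z\odot(q_1\unit) = (z\cdot q_1)\unit$ in $\QWd$. For $z = q\de_w$, Lemma~\ref{lem:Heckeactions}(ii) gives $(q\de_w)\odot(q_1 f_v) = qw(q_1)f_{wv}$; summing over $v$ and relabelling $v' = wv$ produces $qw(q_1)\unit$, which is exactly $\bigl((q\de_w)\cdot q_1\bigr)\unit$ by~\eqref{actdfs}.

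No serious obstacle is anticipated: both squares collapse to bookkeeping with the formulas of Lemma~\ref{lem:Heckeactions}. The only real point of care is disentangling the coexisting $\Ss$-module structures (the intrinsic $\Ss$-structure on $\DFd$, the restriction of $\bullet$ to $\Ss\subset\DF$, and the canonical action $\cdot$ of $\DF$ on $\Ss$) and checking that the vertical arrows $(z\cdot-)\otimes 1$ and $1\otimes(z\cdot-)$ are well-defined on $\Ss\otimes_{\Ss^W}\Ss$ — this is immediate from $w(s) = s$ for $s\in\Ss^W$, so $z\cdot$ is $\Ss^W$-linear for $z = q\de_w$ and hence for all $z\in\DF$.
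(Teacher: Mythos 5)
Your proof is correct and follows essentially the same route the paper intends: the paper's own argument is a one-line appeal to direct computation using Lemma~\ref{lem:Heckeactions} and the definition of $\rho$, which is exactly the bookkeeping you carry out after reducing each square to the identities $c_\Ss(z\cdot q_2)=z\bullet c_\Ss(q_2)$ and $z\odot(q_1\unit)=(z\cdot q_1)\unit$. (The paper also sketches an alternative route via $Y_\Pi$ in the remark that follows, but your computation matches the stated proof.)
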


\begin{proof}
By direct computation using Lemma~\ref{lem:Heckeactions} and the definition of $\rho$. 
\end{proof}

\begin{rem}
Using the element $Y_\Pi$ introduced in the next section, the lemma can be also deduced from the following identities: 
\[
\iota(Y_\Pi)=Y_\Pi, \quad Y_\Pi\bullet \pt=\unit\quad \text{ and } \quad
zpY_\Pi=(z\cdot p)Y_\Pi\; \text{  for any }z\in \DF,\; p\in \Ss.
\]
Moreover, using \eqref{prop2} below, we have
\begin{gather*}
Y_\al\odot \unit=Y_\al\odot Y_\Pi \bullet \pt=Y_\Pi Y_\al \bullet \pt=\kappa_\al \unit, \quad Y_\al\bullet \unit=Y_\al Y_\Pi\bullet \pt=\kappa_\al \bullet \unit,\\ X_\al\odot \unit=Y_\Pi X_\al\bullet \pt=0, 
 \quad X_\al \bullet \unit=X_\al Y_\Pi \bullet \unit=0.
\end{gather*}
\end{rem}

The following Proposition was proved in \cite[page~253]{Br97} for equivariant Chow groups, and in \cite[Lemma 13.4]{CPZ} for ordinary oriented cohomology.

\begin{prop} (cf. also \cite[Lemma~13.3]{CZZ1}) For any $z\in \QW,\; q\in \Qq$, we have 
\[
A_\Pi((z\odot \pt)c_{\Ss}(q))=(z\cdot q)\unit,
\]
where $A_\Pi= Y_\Pi\bullet\_$ is the pushforward to the point map (see the definition in the next section). 
\end{prop}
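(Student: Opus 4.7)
The plan is to absorb the coordinate-wise multiplication by $c_\Ss(q)$ into the $\odot$-action on $\pt$, and then move $Y_\Pi$ past the $\odot$-action using the commutativity of the two Hecke actions.

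First I would establish the intermediate identity
\[
(z\odot\pt)\cdot c_\Ss(q)=(zq)\odot\pt \qquad\text{in }\QWd,
\]
valid for any $z\in\QW$ and $q\in\Qq$. By Lemma~\ref{lem:Heckeactions} we have $c_\Ss(q)=q\bullet\unit=\sum_{v\in W}v(q)f_v$, and the same lemma shows that the $\bullet$-action of $q\in\Qq$ on $\sum_v p_v f_v\in\QWd$ simply scales the $v$-th coordinate by $v(q)$; hence it coincides with the coordinate-wise product by $c_\Ss(q)$. Using this, together with the definition~\eqref{eq:odot} of $\odot$, associativity of the $\bullet$-action, and the fact that $\iota$ is an anti-involution fixing $\Qq$,
\[
(z\odot\pt)\cdot c_\Ss(q)=q\bullet(\iota(z)\bullet\pt)=(q\iota(z))\bullet\pt=\iota(zq)\bullet\pt=(zq)\odot\pt.
\]

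Next, applying $A_\Pi=Y_\Pi\bullet\_$ and invoking the commutativity of the two Hecke actions (Lemma~\ref{actcommute}(i)) together with the identity $Y_\Pi\bullet\pt=\unit$ recorded in the preceding remark, we obtain
\[
A_\Pi\bigl((z\odot\pt)\,c_\Ss(q)\bigr)=Y_\Pi\bullet\bigl((zq)\odot\pt\bigr)=(zq)\odot(Y_\Pi\bullet\pt)=(zq)\odot\unit.
\]
It then remains to verify the identity $(zq)\odot\unit=(z\cdot q)\unit$. Both sides are additive in $z$, so using the $\Qq$-basis $\{\de_w\}_{w\in W}$ of $\QW$ it suffices to treat $z=p\de_w$ with $p\in\Qq$. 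For such $z$ we have $zq=pw(q)\de_w$, and Lemma~\ref{lem:Heckeactions} gives
\[
(pw(q)\de_w)\odot\unit=\sum_{v\in W}(pw(q)\de_w)\odot f_v=\sum_{v\in W}pw(q)f_{wv}=pw(q)\unit=(z\cdot q)\unit,
\]
where the last equality uses the definition~\eqref{actdfs} of the canonical action.

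I do not anticipate a significant obstacle: the argument is a formal manipulation resting on (i) the interaction between the two Hecke actions, (ii) the anti-involution $\iota$, and (iii) the defining property $Y_\Pi\bullet\pt=\unit$. The only mildly subtle point is the initial observation that coordinate-wise multiplication by $c_\Ss(q)$ on $\QWd$ is the same operation as the $\bullet$-action of $q\in\Qq$; once that is noted, the rest is straightforward bookkeeping.
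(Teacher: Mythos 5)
Your proof is correct, and every step is justified by lemmas already established in the paper. The paper itself does not supply a proof of this proposition; it only points to antecedent proofs in Brion, in Calm\`es--Petrov--Zainoulline, and in \cite[Lemma~13.3]{CZZ1}, and a preceding remark records the three identities ($\iota(Y_\Pi)=Y_\Pi$, $Y_\Pi\bullet\pt=\unit$, $zpY_\Pi=(z\cdot p)Y_\Pi$) from which it ``can be deduced.'' So there is no in-text argument to compare against; what can be said is that your route matches the spirit of that remark. Your reduction of the coordinate-wise multiplication by $c_\Ss(q)$ to the $\bullet$-action of $q\in\Qq$ is accurate (both scale the $f_v$-component by $v(q)$, as read off from Lemma~\ref{lem:Heckeactions}); the passage $q\iota(z)=\iota(zq)$ is exactly the anti-multiplicativity of $\iota$ together with $\iota|_\Qq=\mathrm{id}$; the use of Lemma~\ref{actcommute}(i) and $Y_\Pi\bullet\pt=\unit$ to move $Y_\Pi$ through the $\odot$-action is correct; and the final basis computation with $z=p\de_w$ correctly produces $(p\de_w\cdot q)\unit=pw(q)\unit$. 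The one place where you could have shortened slightly is the last step: instead of the basis computation, you could have invoked the third identity from the remark in the dualized form $Y_\Pi\,q\,\iota(z)=(z\cdot q)\,Y_\Pi$ (obtained by applying $\iota$ to $zqY_\Pi=(z\cdot q)Y_\Pi$ and using $\iota(Y_\Pi)=Y_\Pi$), but your direct check is equally valid and arguably more transparent.
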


\subsection{Hecke actions on cohomology}\label{heckeactions}
We now study the Hecke actions on the cohomology ring $\hh_T(G/P_J)$, where $P_J$ is the parabolic subgroup corresponding to a subset $J\subset\Pi$. 

We first look at the $\bullet$-action. Given a subset $J'\subset J$ we define
\[
x_{J/J'}=\prod_{\al\in \Sigma^-_{J/J'}}x_\al,\quad x_J=x_{J/\emptyset}.
\]
Note that the definition of $x_{\Pi}$ in~\eqref{xpi} is a special case of the one above. Given a set of left coset representatives $W_{J/J'}$ of $W_J/W_{J'}$, we define a push-pull element 
\[
Y_{J/J'}:=\left(\sum_{w\in W_{J/J'}}\de_w\right)\tfrac{1}{x_{J/J'}}, \quad Y_J=Y_{J/\emptyset}=\left(\sum_{w\in W_J}\de_w\right)\tfrac{1}{x_J}.
\]
Clearly, if $J=\{i\}$, we recover the definition of $Y_i$. The $\bullet$-action of $Y_{J/J'}$ on $\QWd$ is denoted by $A_{J/J'}$. It follows from \cite[Corollary~12.1]{CZZ1} that $Y_J\in \DF$, so $A_J$ is an endomorphism of $\DFd$, which corresponds to the composition  of the pushforward and pullback maps in cohomology 
\[
A_J\colon \hh_T(G/B)\to \hh_T(G/P_J)\to \hh_T(G/B).
\]  

Let $J''\subset J'\subset J$, and let $W_{J/J'}$ and $W_{J'/J''}$ be some sets of  coset representatives of $W_J/W_{J'}$ and $W_{J'}/W_{J''}$, respectively. Let $W_{J/J''}=W_{J/J'}W_{J'/J''}$, which is a set of coset representatives of $W_J/W_{J''}$. We then have (see \cite[Lemma~5.7]{CZZ1}) 
\begin{equation}\label{prop1} 
Y_{J/J''}=Y_{J/J'}Y_{J'/J''}\quad\text{ and }\quad A_{J/J''}=A_{J/J'}\circ A_{J'/J''}.
\end{equation}
If $j\in J$, then (see \cite[Lemma~5.9]{CZZ1}) 
\begin{equation}\label{prop2} 
Y_jY_J=\kappa_jY_J, ~Y_JY_j=Y_J\kappa_j \quad\text{ and }\quad X_jY_J=Y_JX_j=0.
\end{equation}

Define $\DFPd{J}=A_J(\DFd)$. It follows from \cite[Corollary 14.6, Lemma 15.1]{CZZ1} and \cite[Theorem 8.11]{CZZ2} that 
\[
\DFPd{J}=(\DFd)^{W_J}\cong \hh_T(G/P_J)\,.
\]
Here the superscript $W_J$ means the $W_J$-invariant subset via the $\bullet$-action of $W_J\subset W\subset \DF$ on $\DFd$. Moreover,  they also imply that  the endomorphism $A_{J/J'}$ of $\QWd$ induces a surjective map $\DFPd{J'}\to \DFPd{J}$, which corresponds to the pushforward map $\hh_T(G/P_{J'})\to \hh_T(G/P_J)$.

\begin{rem}
The definition of $A_{J/J'}$ depends on the choice of the coset representatives of $W_J/W_{J'}$, but its restriction to $\DFPd{J'}$ does not (see \cite[Lemma~6.5]{CZZ1}).
\end{rem}

The element $A_J(\pt)\in \DFPd{J}$ corresponds to the class of the identity point in $\hh_T(G/P_J)$, so we denote it by $\pt_J$. Based on definitions, this can be expressed more explicitly as follows:
\begin{equation}\label{calcbs} 
\pt_J=x_{\Pi/J}\sum_{w\in W_J} f_w\,.
\end{equation}
By using $w(x_{\Pi/J})=x_{\Pi/J}$ for $w\in W_J$, we have $\iota(Y_J)=Y_J$, so
\begin{equation}\label{bullodot}
\pt_J=Y_J\bullet\pt=Y_J\odot\pt\,.
\end{equation}

\begin{lem}
The $\odot$-action of $\DF$ on $\DFd$ restricts to an action on $\DFPd{J}$, making the latter into a $\DF$-module. 
\end{lem}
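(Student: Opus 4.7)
The plan is to reduce the statement immediately to the commutativity of the two Hecke actions (Lemma~\ref{actcommute}(i)) together with the two descriptions of $\DFPd{J}$ already recalled in the text, namely
\[
\DFPd{J} \;=\; A_J(\DFd) \;=\; Y_J\bullet \DFd \;=\; (\DFd)^{W_J},
\]
where the superscript refers to the $\bullet$-action of $W_J\subset W\subset\DF$. Since the $\odot$-action of $\DF$ already preserves $\DFd$ inside $\QWd$ (this was noted right after the definition of $\odot$), the only thing left to check is that $\odot$ preserves the subset $\DFPd{J}\subset\DFd$; $\DF$-module axioms then hold automatically because they already hold on all of $\DFd$.

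First I would use the invariant-theoretic description. Let $z\in\DF$, $f\in\DFPd{J}$, and $w\in W_J$. By Lemma~\ref{actcommute}(i), the $\bullet$- and $\odot$-actions commute, hence
\[
w\bullet(z\odot f) \;=\; z\odot(w\bullet f) \;=\; z\odot f,
\]
since $f$ is $W_J$-invariant. Thus $z\odot f\in(\DFd)^{W_J}=\DFPd{J}$, which is exactly what we need.

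For the sake of an alternative (and to make the geometric picture transparent), I would also give the argument via $A_J$. Any element of $\DFPd{J}$ is of the form $Y_J\bullet g$ for some $g\in\DFd$. Then, again by commutativity of the two actions,
\[
z\odot(Y_J\bullet g)\;=\;Y_J\bullet(z\odot g)\;=\;A_J(z\odot g)\;\in\;A_J(\DFd)\;=\;\DFPd{J}.
\]
Either viewpoint gives the stability instantly, and the module axioms $(zz')\odot f = z\odot(z'\odot f)$, $1\odot f=f$, and additivity/$\Rr$-linearity are inherited from the $\odot$-action on $\DFd$.

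There is essentially no obstacle here: the entire content is the commutativity recorded in Lemma~\ref{actcommute}(i). The only thing worth a short comment in the written proof is that $\odot$ is not $\Qq$-linear (as observed right after the definition), so one cannot conclude preservation of $\DFPd{J}$ from any coefficient-extension argument; the invariance (or equivalently, the $A_J$-image) description is what one must use.
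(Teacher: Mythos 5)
Your proof is correct and uses exactly the same idea as the paper's one-line proof, which reads: ``This follows from the definition of $\DFPd{J}$ and Lemma~\ref{actcommute}.(i).'' Your second argument (via $\DFPd{J}=A_J(\DFd)=Y_J\bullet\DFd$ and commutativity of the two Hecke actions) is precisely what the paper meant, and your first argument via $W_J$-invariance is an equally valid unpacking of the same observation.
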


\begin{proof}
This follows from the definition of $\DFPd{J}$ and Lemma~\ref{actcommute}.(i).
\end{proof}

By definition, the $\odot$-action of $\DF$ on $\DFPd{J}$  can be expressed as follows:
\[
\DF\to \DFPd{J}\,, \;\;z\mapsto z\odot \pt_J=z\odot A_J(\pt)=A_J(z\odot\pt)=A_J(\iota(z)\bullet \pt)\,.
\]
Thus, the {\em parabolic Bott-Samelson classes} inside $\DFPd{J}\cong \hh_T(G/P_J)$ are determined by 
\begin{equation}\label{pbsy}
\mY{J}{w}:=Y_{I_w}\odot\pt_J=A_J(Y_{I^{-1}_w}\bullet \pt)\,, \quad w\in W^J\,.
\end{equation}
Similarly, one can look at the classes determined by the $X$-operators, namely at
\begin{equation}\label{pbsx}
\mX{J}{w}:=X_{I_w}\odot\pt_J=A_J(X_{I_w^{-1}}\bullet \pt)\,, \quad w\in W^J\,.
\end{equation}
We have $\mX{}{e}=\mY{}{e}=\pt$ and $\mX{J}{e}=\mY{J}{e}=\pt_J$. It follows from \cite[Theorem~14.3]{CZZ1} that the sets $\{\mX{J}{w}\}_{w\in W^J}$ and $\{\mY{J}{w}\}_{w\in W^J}$ are  bases of $\DFPd{J}$. Therefore,  the following theorem holds:

\begin{thm} \label{thm:main}
All parabolic Bott-Samelson classes $\zeta_{I_w}^J$, $w\in W^J$, in $\DFPd{J} \simeq \hh_T(G/P)$ can be obtained by applying the operators $Y_{i}\odot\_$, $i\in [n]$ to the class of the point $\mY{J}{e}=\pt_J$. The same holds for the classes $\hat\zeta_{I_w}^J$ if we use the operators $X_i\odot\_$ instead. 
\end{thm}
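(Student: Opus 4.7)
The strategy is to deduce the theorem almost directly from the definition~\eqref{pbsy} together with the fact that $\odot$ defines a genuine left action of $\DF$ on $\DFPd{J}$. The first step I would take is to verify this left-action property at the level of $\DFd$: writing $f = z'' \bullet \pt$ and using that $\iota$ is an anti-involution on $\DF$, one has
\[ (z z') \odot f \;=\; z'' \, \iota(zz') \bullet \pt \;=\; z'' \, \iota(z')\iota(z) \bullet \pt \;=\; z \odot (z' \odot f) \]
for all $z, z' \in \DF$. By Lemma~\ref{actcommute}(i), this restriction descends to $\DFPd{J}$, as already recorded in the lemma immediately preceding the theorem.

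Next, given a reduced word $I_w = (i_1, \ldots, i_k)$ for $w \in W^J$, the push-pull element $Y_{I_w} = Y_{i_1} Y_{i_2} \cdots Y_{i_k}$ is by construction a product of simple push-pulls in $\DF$. Iterating the left-action property then yields
\[ \mY{J}{w} \;=\; Y_{I_w} \odot \pt_J \;=\; Y_{i_1} \odot \bigl( Y_{i_2} \odot \cdots \odot ( Y_{i_k} \odot \pt_J ) \bigr), \]
which is precisely the desired expression of $\mY{J}{w}$ as the outcome of iteratively applying the operators $Y_i \odot \_$, $i \in [n]$, to the class $\pt_J = \mY{J}{e}$.

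To conclude that \emph{all} parabolic Bott-Samelson classes are so obtained, I would appeal to \cite[Theorem~14.3]{CZZ1}, which provides an $\Ss$-basis $\{Y_{I_w^{-1}} \bullet \pt\}_{w \in W}$ of $\DFd$. Applying the surjective projection $A_J\colon \DFd \to \DFPd{J}$ and using~\eqref{prop2} (which shows that, for any representative of $wW_J$ not lying in $W^J$, the image under $A_J$ reduces to one coming from $W^J$) yields the $\Ss$-basis $\{\mY{J}{w}\}_{w \in W^J}$ of $\DFPd{J}$. The $X$-version follows by the identical argument with $X_i$ replacing $Y_i$ throughout, now using the basis statement for $\{\mX{J}{w}\}_{w \in W^J}$.

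There is no real obstacle here; the theorem is essentially a reformulation of definition~\eqref{pbsy} once the left-action property of $\odot$ is established. The only item requiring genuine care is bookkeeping around the order-reversal effect of $\iota$, which is what converts products in $\DF$ (computed via $\bullet$) into iterated $\odot$-operations applied in the correct order.
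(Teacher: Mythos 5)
Your argument is correct and follows essentially the same route as the paper: use the definition~\eqref{pbsy} to write $\mY{J}{w}=Y_{I_w}\odot\pt_J$, unravel it as an iterated application of $Y_i\odot\_$ via the anti-involution property of $\iota$, note that~$\DFPd{J}$ is $\odot$-stable by the lemma preceding the theorem, and cite \cite[Theorem~14.3]{CZZ1} for the basis fact. The one small inaccuracy is in your final step: the paper cites \cite[Theorem~14.3]{CZZ1} as directly yielding the parabolic bases $\{\mY{J}{w}\}_{w\in W^J}$ and $\{\mX{J}{w}\}_{w\in W^J}$ of $\DFPd{J}$, so there is no need to manufacture them from a basis of $\DFd$ by pushing through $A_J$ and~\eqref{prop2}.
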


\begin{rem}\label{rempbs} (i) By \eqref{pbsy} and \eqref{pbsx}, the parabolic Bott-Samelson classes can be calculated both via the $\odot$- and the $\bullet$-actions. The advantage of the $\odot$-action is that it leads to a recursive computation which stays inside $\DFPd{J}$. More precisely, given a reduced sequence  $I_w=(i_1,i_2,\ldots,i_k)$ of $w\in W^J$, we know from Lemma~\ref{lem:parw} that $v:=s_{i_1}w<w$ is in $W^J$; so $I_{v}:=(i_2,\ldots,i_k)$ is a reduced sequence of $v$, and we have
\begin{equation}\label{extopclass}
\mY{J}{w}=Y_{i_1}\odot\mY{J}{v}\,,\quad \mX{J}{w}=X_{i_1}\odot\mX{J}{v}\,.
\end{equation}

(ii) Also by \eqref{pbsy} and \eqref{pbsx}, the parabolic Bott-Samelson classes are obtained from the ones corresponding to $G/B$ via the pushforward map, namely
\[
\mY{J}{w}=A_J(\mY{}{w})\,,\quad \mX{J}{w}=A_J(\mX{}{w}),\quad w\in W^J\,.
\]
On the other hand, in ordinary cohomology and $K$-theory, it is well-known that the pullback map sends Schubert classes to Schubert classes, see e.g. \cite[\S10.6]{fulyt}. This can be seen easily in our algebraic setting. Indeed, for $F=F_a$ or $F_m$, it follows from similar argument as in Proposition~\ref{prop:gammaJ}.(ii) that $Y_J=Y_{w_J}$. So 
\[
\zeta_{w}^J=A_J(\zeta_w)=Y_{w_J}\bullet Y_{w^{-1}}\bullet \pt=Y_{(ww_J)^{-1}}\bullet\pt=\zeta_{ww_J},\quad w\in W^J.
\] 
For general $F$, we may not have $Y_J=Y_{w_J}$. However, see Theorem~\ref{thm:C} below for a similar situation.
\end{rem}

\subsection{Hecke actions on parabolic Bott-Samelson classes}\label{subsec:BS}
We now study the action of $A_{J/J'}$ on the classes $\zeta_{I_w}^J$ and $\hat\zeta_{I_w}^J$, with the goal of extending Remark~\ref{rempbs}~(ii). For $J'\subset J$, we have $W^J\subset W^{J'}$, and by definition, the $\Ss$-linear homomorphism $A_{J/J'}$ induced by $Y_{J/J'}$ commutes with the $\odot$-action of $\DF$ on $\DFPd{J}$.

\begin{lem}\label{lem:partialD}
Let $\{I_w\}_{w\in W}$ be a set of reduced sequences. Let $J'\subset J$, and $v\in W^{J'}$, then 
\[
A_{J/J'}(\mX{J'}{v})=\left\{\begin{array}{ll}\mX{J}{v}, & \text{if }v\in W^J;\\
\underset{u\in W^{J},  \ell(u)<\ell(v)}\sum d_u \mX{J}{u}, &\text{if }v\not\in W^J.\end{array}\right. 
\]
The same conclusion holds if one replaces $\mX{J'}{v}$ by $\mY{J'}{v}$. 
\end{lem}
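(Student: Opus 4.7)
The plan is to first reduce the problem to computing $X_{I_v}\odot\pt_J$ inside $\DFPd{J}$. Since $A_{J/J'}$ is the $\bullet$-action of $Y_{J/J'}$, Lemma~\ref{actcommute}(i) tells us it commutes with the $\odot$-action, so
\[
A_{J/J'}(\mX{J'}{v}) \;=\; A_{J/J'}(X_{I_v}\odot\pt_{J'}) \;=\; X_{I_v}\odot A_{J/J'}(\pt_{J'}) \;=\; X_{I_v}\odot\pt_J,
\]
where the last equality uses \eqref{prop1}. When $v\in W^J$, the right-hand side is literally $\mX{J}{v}$ by \eqref{pbsx}, which settles case~(i).

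For case~(ii) I would use a support-and-triangularity argument in the basis $\{f_w\}_{w\in W}$ of $\SWd$. On one hand, \eqref{calcbs} shows that $\pt_J$ is supported on $W_J$, and the explicit formula for the $\odot$-action of $X_\al$ sends each $f_v$ into $\mathrm{span}\{f_v,f_{s_\al v}\}$. Iterating $X_{i_k},\ldots,X_{i_1}$ and invoking the subword characterization of Bruhat order, $X_{I_v}\odot\pt_J$ is supported on $\{u\,w_J:u\le v,\;w_J\in W_J\}$. On the other hand, the $\bullet$-action version of the same formulas shows that $\mX{}{u}=X_{I_u^{-1}}\bullet\pt$ is supported on $\{w:w\le u\}$, and a direct computation gives that $Y_J\bullet$ spreads each $f_w$ across the coset $wW_J$. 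Hence $\mX{J}{u}=Y_J\bullet\mX{}{u}$ is supported on $\bigcup_{w\le u}wW_J$. For $u'\in W^J$, only $w\in uW_J$ with $w\le u$ contribute to the $f_{u'}$-coefficient, and since $u\in W^J$ is the minimum of $uW_J$ this forces $w=u$; one deduces the triangularity $(\mX{J}{u})_{u'}=0$ unless $u'\le u$, together with the leading-term non-vanishing $(\mX{J}{u})_u\ne 0$.

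The third step is the triangular extraction. Expand $X_{I_v}\odot\pt_J=\sum_{u\in W^J}d_u\mX{J}{u}$ and let $u^*\in W^J$ be Bruhat-maximal among those with $d_{u^*}\ne 0$ (if none exists, we are done). By the triangularity from step two, the $f_{u^*}$-coefficient of the right-hand side collapses to the single nonzero term $d_{u^*}(\mX{J}{u^*})_{u^*}$. The first-step support bound then forces $u^*=uw_J$ for some $u\le v$ and $w_J\in W_J$, and using that $u^*$ is the minimum of its own $W_J$-coset we conclude $u^*\le u\le v$ in $W$. Since $v\notin W^J$ while $u^*\in W^J$ we must have $u^*\ne v$, so $\ell(u^*)<\ell(v)$, and therefore every $u$ with $d_u\ne 0$ satisfies $\ell(u)\le \ell(u^*)<\ell(v)$.

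The $Y$-version of the statement is proved identically, since $Y_i\odot$ and $Y_i\bullet$ propagate support through exactly the same pairs $\{f_v,f_{s_iv}\}$ and $\{f_v,f_{vs_i}\}$ as their $X$-counterparts. The main technical obstacle is the leading-coefficient claim $(\mX{J}{u})_u\ne 0$; this reduces via the $Y_J\bullet f_v$ computation to the non-vanishing of the $f_u$-coefficient in the $G/B$-Bott-Samelson class $\mX{}{u}$, which in turn follows inductively from the explicit formula for $X_\al\bullet$ applied along a reduced decomposition $I_u$ and is a familiar product of root expressions.
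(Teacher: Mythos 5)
Your proof is correct, but it takes a genuinely different route from the paper's. After the common reduction $A_{J/J'}(\hat\zeta^{J'}_{I_v})=A_J(X_{I_v^{-1}}\bullet\pt)$, the paper argues purely inside the algebra $\DF$: writing $v=uw$ with $u\in W^J,\;w\in W_J$, it invokes \cite[Lemma~7.1]{CZZ} to expand the defect $X_{I_v}-X_{I_u}X_{I_w}$ as an $\Ss$-combination of strictly shorter products, kills $A_J(X_{I_w^{-1}}X_{I_u^{-1}}\bullet\pt)$ for $w\neq e$ via $X_jY_J=0$ from \eqref{prop2}, and closes by a length induction. By contrast, your argument lives in the localized module $\QWd$ (the moment-graph picture): you bound the Bruhat support of $X_{I_v}\odot\pt_J$ from the explicit formulas for the $\odot$-action, establish Bruhat upper-triangularity together with nonzero diagonals for the family $\{\hat\zeta^J_{I_u}\}_{u\in W^J}$ in the basis $\bigl\{\sum_{w\in u'W_J}f_w\bigr\}_{u'\in W^J}$ of $(\QWd)^{W_J}$, and extract coefficients triangularly. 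Both are valid; the paper's is shorter and avoids localization, while yours makes the positional structure visible and actually yields the slightly sharper conclusion $u<v$ in Bruhat order, not merely $\ell(u)<\ell(v)$.

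Two points to tighten in your write-up. First, the deferred claim $(\hat\zeta^J_{I_u})_u\neq 0$ does not need an explicit product formula: once you have upper-triangularity and the fact that $\{\hat\zeta^J_{I_u}\}_{u\in W^J}$ is an $\Ss$-basis of $\DFPd{J}$ (\cite[Theorem~14.3]{CZZ1}), hence a $\Qq$-basis of $(\QWd)^{W_J}$, a triangular change-of-basis matrix with a vanishing diagonal entry would be singular. Second, the set $\{u\in W^J:d_u\neq 0\}$ may have several incomparable Bruhat-maximal elements; you should run the length bound for each such $u^*$ (the argument is identical) before concluding that every $u$ with $d_u\neq 0$ lies below one of them.
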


\begin{proof} 
By~\eqref{prop1}, we have
\[
A_{J/J'}(\mX{J'}{v})=A_{J/J'}\circ A_{J'}( X_{I_v}\odot\pt)=A_{J}(X_{I_{v}}\odot \pt)=A_J(X_{I_v^{-1}}\bullet \pt).
\]
If $v\in W^J$, it follows that $A_{J/J'}(\mX{J'}{v})=\mX{J}{v}$.

If $v\not\in W^{J}$, then $v=uw$ with $u\in W^J, w\in W_J$ and $\ell(u)<\ell(v)$. Hence, the concatenation of $I_u$ and $I_w$ is another reduced sequence of $v$. We prove the conclusion by induction on $\ell(u)$. If $\ell(u)=0$, then $v=w$ and $A_J(X_{I_w^{-1}}\bullet \pt)=\de_{w,e}\mX{J}{e}$, by~\eqref{prop2}. So the conclusion follows.

Now suppose $\ell(u)\ge 1$. By~\cite[Lemma~7.1]{CZZ}, we have 
\[
X_{I_v}-X_{I_{u}}X_{I_w}=\sum_{v'\in W,\ell(v')<\ell(v)}d_{v'}X_{I_{v'}}. 
\]
So
\[
A_J(X_{I_v}\odot \pt)=A_J(X_{I_u}X_{I_w}\odot\pt)+\sum A_J((d_{v'}X_{I_{v'}})\odot\pt). 
\]
By induction, 
\[
A_J((d_{v'}X_{I_{v'}})\odot \pt)=A_J(X_{I_{v'}^{-1}}d_{v'}\bullet \pt)=d_{v'}A_J(X_{I_{v'}^{-1}}\bullet \pt)
\]
is a linear combination of $\mX{J}{u'}$ with $u'\in W^J$ and $\ell(u')\le\ell(v')<\ell(v)$. Moreover, 
\[
A_J(X_{I_u}X_{I_w}\odot \pt)=A_J(X_{I_w^{-1}}X_{I_u^{-1}}\bullet \pt)=\de_{w,e}\mX{J}{u}\,,
\] by~\eqref{prop2}. So the conclusion follows.

The statement for $\mY{J}{v}$ can be proved similarly. 
\end{proof}

We will now state a more explicit version of Lemma~\ref{lem:partialD} in a special case. 

\begin{dfn}For $J'\subset J$, we say a set of reduced sequences $\{I_w\}_{w\in W}$ is $J'\subset J$-compatible if for any $w=v_1v_2v_3$ such that $v_1\in W^{J}, v_2\in W_{J}, v_3\in W_{J'}$ and $v_1v_2\in W^{J'}$, the sequence $I_w$ is the concatenation of $I_{v_1}$, $I_{v_2}$, and $I_{v_3}$ in this order.
\end{dfn}

\begin{lem} If $\{I_w\}_{w\in W}$ is $J'\subset J$-compatible, then for any $v_1\in W^J, v_2\in W_J$ such that $v_1v_2\in W^{J'}$,  we have
\[
A_{J/J'}(\mX{J'}{v_1v_2})=\de_{v_2,e}\mX{J}{v_1}. 
\]
If $\kappa_\al$ is a constant $\kappa$ for any $\al$, then 
\[
 A_{J/J'}(\mY{J'}{v_1v_2})=\kappa^{\ell(v_2)}\mY{J}{v_1}. 
\]
\end{lem}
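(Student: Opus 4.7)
The plan is to combine the $J'\subset J$-compatibility with the identities \eqref{prop1} and \eqref{prop2}.

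First, I would invoke the compatibility hypothesis with $v_3 = e$: since $v_1\in W^J$, $v_2\in W_J$, and $v_1v_2\in W^{J'}$, the reduced sequence $I_{v_1v_2}$ is the concatenation of $I_{v_1}$ and $I_{v_2}$, so $X_{I_{v_1v_2}^{-1}} = X_{I_{v_2}^{-1}}X_{I_{v_1}^{-1}}$ (the reversal swaps the order of concatenation). Combining \eqref{pbsx} with $A_{J/J'}\circ A_{J'} = A_J$ from \eqref{prop1} then gives
\[
A_{J/J'}(\mX{J'}{v_1v_2}) = A_J\bigl(X_{I_{v_2}^{-1}}X_{I_{v_1}^{-1}}\bullet\pt\bigr) = Y_J X_{I_{v_2}^{-1}} X_{I_{v_1}^{-1}}\bullet\pt.
\]

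Next, I would split into two cases. If $v_2 \neq e$, write $I_{v_2}^{-1} = (j_m,\ldots,j_1)$ with all $j_i\in J$ and $m = \ell(v_2)\geq 1$. Then $Y_J X_{I_{v_2}^{-1}}$ begins with $Y_JX_{j_m}=0$ by \eqref{prop2}, so the entire expression vanishes. If $v_2 = e$, then $X_{I_{v_2}^{-1}}$ is the empty product, and what remains is $A_J(X_{I_{v_1}^{-1}}\bullet\pt) = \mX{J}{v_1}$. Together this yields $\de_{v_2,e}\mX{J}{v_1}$.

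For the $Y$-statement, the same factorization produces $Y_JY_{I_{v_2}^{-1}}Y_{I_{v_1}^{-1}}\bullet\pt$. Under the constancy hypothesis $\kappa_\al\equiv \kappa\in\Rr$, I would use \eqref{prop2} in the form $Y_JY_{j_i} = Y_J\kappa_{j_i} = \kappa Y_J$, where the scalar $\kappa$ commutes with $Y_J$ because $\Rr$ is central in $\QW$. Inducting on $\ell(v_2) = m$ strips one $\kappa$ per letter, giving $Y_JY_{I_{v_2}^{-1}} = \kappa^{\ell(v_2)}Y_J$. The remaining factor is $Y_JY_{I_{v_1}^{-1}}\bullet\pt = \mY{J}{v_1}$, which concludes the argument.

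The computation is essentially mechanical once the factorization is in place; the only point requiring care is that the constancy assumption on $\kappa_\al$ is genuinely needed for the second formula, since otherwise the $\kappa_{j_i}$'s would not pass through $Y_J$ as scalars and the product would not collapse to a clean power of $\kappa$. The first formula, by contrast, needs only the weaker identity $Y_JX_j = 0$ from \eqref{prop2} and requires no hypothesis on $\kappa_\al$ at all.
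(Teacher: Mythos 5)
Your proof is correct and is essentially the argument the paper itself has in mind when it says the proof is ``similar to that of Lemma~\ref{lem:partialD}.'' The key point is exactly the one you identify: the $J'\subset J$-compatibility hypothesis (applied with $v_3=e$, which is legitimate since $e\in W_{J'}$ and $v_1v_2\in W^{J'}$ by assumption) gives the exact factorization $X_{I_{v_1v_2}^{-1}} = X_{I_{v_2}^{-1}}X_{I_{v_1}^{-1}}$, so the correction-term bookkeeping via \cite[Lemma~7.1]{CZZ} and the induction that appear in Lemma~\ref{lem:partialD} are unnecessary here. After that, $A_{J/J'}\circ A_{J'}=A_J$ from \eqref{prop1} and $Y_JX_j=0$ from \eqref{prop2} (for $j\in J$, which applies since every letter of $I_{v_2}$ lies in $J$) finish the $X$-case, and $Y_JY_j=Y_J\kappa_j$ together with centrality of the scalar $\kappa\in\Rr$ finishes the $Y$-case. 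Your closing remark that the constancy hypothesis is genuinely what allows $\kappa_{j_i}$ to be pulled out as a scalar is correct and worth stating, though strictly speaking what the lemma's second claim needs is only that all $\kappa_{j}$ for $j\in J$ commute with $Y_J$ and equal the same element; constancy in $\Rr$ is a convenient sufficient condition and is what the paper assumes.
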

\begin{proof}The proof is similar to that of Lemma~\ref{lem:partialD}. 
\end{proof}
{If $F$ is $F_a$ or $F_m$, that is, in the case of the Chow group or the Grothendieck group, both conditions in the above lemma are satisfied.}

\subsection{Recursive formulas}\label{subsec:recform}
Now assume that $\kappa_\al=\kappa$ is a constant, and that $W^J$ is {\em fully commutative}; this means that for each $w\in W^J$, any two reduced sequences $I$, $I'$ of $w$ can be related by means of only braid relations involving commuting generators. Thus, $X_I={X_{I'}}$ and $Y_I=Y_{I'}$, so we can write $X_w$ instead. Note that, although $w^{-1}$ might not be in $W^J$, it still has the fully commutative property, so $X_{w^{-1}}$ is defined and  $\iota(X_w)=X_{w^{-1}}$. 

\begin{rem} 
For irreducible finite Weyl groups $W$, the fully commutative parabolic quotients $W^J$ were classified in \cite[Theorem~6.1]{St96}. They are the irreducible {\em minuscule quotients}, which are also maximal quotients (so the complement of $J$ consists of a single element); for the explicit list, which includes the type $A$ Grassmannians, see also~\cite{St96}. 
\end{rem}

\begin{prop}\label{lem:grass} 
Under the above hypothesis, we have
\[
X_j\odot \mXG{J}{v}=\left\{\begin{array}{ll} -\kappa \mXG{J}{v} ,& \text{if }\ell(s_jv)< \ell(v), \text{in which case }s_jv\in W^J;\\
\mXG{J}{s_jv} , & \text{if } \ell(s_jv)> \ell(v) \text{ and } s_jv\in W^J;\\
\underset{u\in W^J,\ell(u)\le \ell(v)-1}\sum d^J_{v,u}\mXG{J}{u}  , & \text{if } \ell(s_jv)> \ell(v) \text{ and } s_jv=vs_i \text{ for some }i\in J,\end{array}\right.
\]	
where $d^J_{v,u}\in \Ss$.
\end{prop}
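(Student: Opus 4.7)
The plan is to induct on $\ell(v)$ and treat the three cases of Lemma~\ref{lem:parw} separately. For the base case $v=e$: if $s_j\in W^J$ (case (ii)), then $X_j\odot\pt_J = \mXG{J}{s_j}$ by definition; if $j\in J$ (case (iii) at $\ell(v)=0$), then $X_j\odot\pt_J = Y_JX_j\bullet\pt = 0$ by~\eqref{prop2}, matching the empty sum.

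In case (i), where $v':=s_jv\in W^J$ has $\ell(v')=\ell(v)-1$, I would use the recursion~\eqref{extopclass} to write $\mXG{J}{v}=X_j\odot\mXG{J}{v'}$ and then apply the quadratic relation~\eqref{eq:square} with $\kappa_\al=\kappa$ constant to obtain
\[ X_j\odot\mXG{J}{v} = X_j^2\odot\mXG{J}{v'} = -\kappa_j\,X_j\odot\mXG{J}{v'} = -\kappa\,\mXG{J}{v}. \]
In case (ii), where $s_jv\in W^J$ has length $\ell(v)+1$, the full commutativity of $W^J$ forces every reduced-word $X$-product for $s_jv$ to coincide, so $X_{I_{s_jv}} = X_jX_{I_v}$ and therefore $\mXG{J}{s_jv} = X_jX_{I_v}\odot\pt_J = X_j\odot\mXG{J}{v}$.

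Case (iii), where $s_jv=vs_i$ with $i\in J$, is the heart of the argument. The critical input is the vanishing $X_i\odot\pt_J = Y_JX_i\bullet\pt = 0$, a consequence of~\eqref{prop2} since $i\in J$. Because $(I_v,i)$ is reduced for $vs_i=s_jv$, this yields $(X_{I_v}X_i)\odot\pt_J = X_{I_v}\odot(X_i\odot\pt_J) = 0$, and hence
\[ X_j\odot\mXG{J}{v} = (X_jX_{I_v}-X_{I_v}X_i)\odot\pt_J. \]
I would then expand this difference in the basis $\{X_{I_w}\}_{w\in W}$ of $\DF$, apply $\odot\pt_J$, and invoke Lemma~\ref{lem:partialD} to produce $\mXG{J}{u}$ for $u\in W^J$. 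The main obstacle is the sharp length bound $\ell(w)\le\ell(v)-1$ on the expansion of $X_jX_{I_v}-X_{I_v}X_i$. The strategy: both products are reduced-word $X$-products for the length-$(\ell(v)+1)$ element $s_jv$, so by Matsumoto's theorem they are connected by a finite sequence of commutation and braid moves; commutations (braid length $m=2$) contribute exactly zero in $\DF$, while each higher braid of length $m\ge 3$ produces a local correction of length at most $m-2$, i.e.\ a length drop of at least two from the top. Telescoping over the chain of moves keeps the total difference in the $\Ss$-span of $\{X_{I_w}\}$ with $\ell(w)\le\ell(v)-1$; combining with Lemma~\ref{lem:partialD} (which, for $w\notin W^J$, only drops the length further) then yields the desired bound $\ell(u)\le\ell(v)-1$.
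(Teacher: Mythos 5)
Your proposal follows the paper's argument case by case and is essentially correct, with one point worth flagging. Cases (i) and (ii) use exactly the paper's reasoning (quadratic relation and the reduced-word recursion). In case (iii) the paper simply cites \cite[Lemma~7.1]{CZZ} for the expansion $X_jX_v-X_vX_i=\sum_{\ell(w)\le\ell(v)-1}\tilde d_{v,w}X_{I_w}$, whereas you propose to re-derive it via Matsumoto's theorem and the braid corrections in $\DF$; this is the right underlying mechanism (commutations are exact, each length-$m$ braid relation has correction terms of length at most $m-2$, and re-expanding never raises length), so your argument is valid but is re-proving a known structural lemma.

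The one spot you gloss over is the conversion of $\tilde d_{v,w}X_{I_w}\odot\pt_J$ into $\Ss$-multiples of classes $\hat\zeta^J_{I_u}$. You say to ``apply $\odot\pt_J$ and invoke Lemma~\ref{lem:partialD},'' but the $\odot$-action is not $\Ss$-linear, and Lemma~\ref{lem:partialD} addresses $A_{J/J'}(\hat\zeta^{J'}_{I_w})$ without coefficients, not $(\tilde d\,X_{I_w})\odot\pt_J$. The paper handles this by routing through the anti-involution: $\tilde d\,X_{I_w}\odot\pt_J = A_J(X_{I_w^{-1}}\tilde d\bullet\pt) = \tilde d\,A_J(X_{I_w^{-1}}\bullet\pt)$, where the last step uses $\pt=x_\Pi f_e$ and $\Ss$-linearity of the $\bullet$-action and of $A_J$; then the basis property of $\DFPd{J}$ (or equivalently Lemma~\ref{lem:partialD} with $J'=\emptyset$) gives a combination of $\hat\zeta^J_{I_u}$ with $\ell(u)\le\ell(w)\le\ell(v)-1$. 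Adding this soon-to-be-routine manipulation would close the gap.
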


\begin{proof}
By definition we have $X_j\odot \mXG{J}{v}=X_jX_v\odot \pt_J$.

If $\ell(s_jv)< \ell(v)$, then $X_{v}=X_jX_{s_jv}$, and $X_jX_v=X_j^2X_{s_jv}=-\kappa X_jX_{s_jv}=-\kappa X_v$. So $X_j\odot \mXG{J}{v}=-\kappa\mXG{J}{v}$.
	
If $\ell(s_jv)> \ell(v)$ and $s_jv\in W^J$, then Remark~\ref{rempbs}~(i) applies.
	
If $\ell(s_jv)> \ell(v)$ and $s_jv=vs_i$ with $i\in J$, then by \cite[Lemma~7.1]{CZZ},
\[
X_{j}X_{v}-X_{v}X_i=\sum_{w\in W,\ell(w)\le \ell(v)-1}\tilde d_{v,w}X_{I_w}, ~\tilde d_{v,w}^J\in \Ss.
\]
So 
\[
X_jX_v\odot \pt_J=X_vX_i\odot \pt_J+\sum_{w\in W, \ell(w)\le \ell(v)-1} \tilde d_{v,w}X_{I_w}\odot\pt_J.
\]
We have 
\[
X_vX_i\odot \pt_J\overset{\eqref{bullodot}}=X_vX_i\odot Y_J\odot\pt=X_v\odot (X_iY_J)\odot\pt\overset{\eqref{prop2}}=0.
\]
Moreover, we know that 
\[
\tilde d_{v,w}X_{I_w}\odot\pt_J=A_J(\iota(\tilde d_{v,w}X_{I_w})\bullet \pt)=A_J(X_{I_w^{-1}}\tilde d_{v,w}\bullet \pt)=\tilde d_{v,w}A_J(X_{I_w^{-1}}\bullet \pt)
\] 
is a linear combination of $\mXG{J}{u}$ with $u\in W^J$ and $\ell(u)\le \ell(w)\le \ell(v)-1$, by the $\Ss$-linearity of the $\bullet$-action and the basis property of $\DFPd{J}$. 
\end{proof}

Similarly, using \eqref{eq:square} and \eqref{prop2}, we have the following result. 

\begin{prop}\label{lem:Yact} Under the above hypothesis, we have
\[
Y_j\odot \mYG{J}{v}=\left\{\begin{array}{ll} \kappa \mYG{J}{v} ,& \text{if }\ell(s_jv)< \ell(v), \text{in which case }s_jv\in W^J;\\
\mYG{J}{s_jv} , & \text{if } \ell(s_jv)> \ell(v) \text{ and } s_jv\in W^J;\\
\kappa\mYG{J}{v}+\underset{u\in W^J, \ell(u)\le \ell(v)-1}\sum \hat d^J_{v,u}\mYG{J}{u}  , & \text{if } \ell(s_jv)> \ell(v) \text{ and } s_jv=vs_i \text{ for some }i\in J\end{array}\right.
\]	
where $\hat d^J_{v,u}\in \Ss$.
\end{prop}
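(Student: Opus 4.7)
My plan is to mirror the proof of Proposition~\ref{lem:grass}, swapping the relations $X_\al^2=-\kappa X_\al$ and $X_jY_J=0$ for their $Y$-counterparts $Y_\al^2=\kappa Y_\al$ from \eqref{eq:square} and $Y_JY_i=\kappa Y_J$ from \eqref{prop2} (the latter valid because $\kappa_i=\kappa$ is constant, hence automatically $W$-invariant since $w(\kappa_\al)=\kappa_{w(\al)}=\kappa$). Since $\odot$ is a left action on $\DFPd{J}$, I start from $Y_j\odot \mYG{J}{v}=(Y_jY_v)\odot\pt_J$, and the fully commutative hypothesis lets me write $Y_v$ unambiguously for $v\in W^J$.

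For Case 1, Lemma~\ref{lem:parw} forces $s_jv\in W^J$, and full commutativity gives $Y_v=Y_jY_{s_jv}$; the quadratic relation then collapses $Y_jY_v=Y_j^2Y_{s_jv}=\kappa Y_v$, hence $Y_j\odot\mYG{J}{v}=\kappa\mYG{J}{v}$, where I use that multiplication by the $W$-invariant constant $\kappa$ commutes with the $\odot$-action. Case 2 is immediate from \eqref{extopclass}.

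Case 3 is the main point. My plan is first to establish a $Y$-analog of \cite[Lemma~7.1]{CZZ},
\[
Y_jY_v-Y_vY_i=\sum_{w\in W,\,\ell(w)\le\ell(v)-1}\hat d_{v,w}Y_{I_w},\qquad \hat d_{v,w}\in \Ss,
\]
either directly from the fact that $\{Y_{I_w}\}_{w\in W}$ is an $\Ss$-basis of $\DF$ together with a length/filtration argument, or by substituting $X_\al=Y_\al-\kappa$ into the known $X$-relation. Applying $\_\odot\pt_J$ and unfolding $z\odot(z'\bullet\pt)=z'\iota(z)\bullet\pt$, the term $Y_vY_i\odot\pt_J$ becomes $Y_JY_iY_{v^{-1}}\bullet\pt=\kappa\,Y_JY_{v^{-1}}\bullet\pt=\kappa\mYG{J}{v}$, using \eqref{prop2}. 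The remaining terms $\hat d_{v,w}Y_{I_w}\odot\pt_J$ are then expanded in the basis $\{\mYG{J}{u}\}_{u\in W^J}$ of $\DFPd{J}$ exactly as at the end of the proof of Proposition~\ref{lem:grass}, using the $\Ss$-linearity of the $\bullet$-action and the degree bound $\ell(u)\le\ell(w)\le\ell(v)-1$.

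The main obstacle is the bookkeeping in Case 3: the $\odot$-action is not $\Qq$-linear, so pulling the scalar $\kappa$ out requires the $W$-invariance of $\kappa$, which makes it central in $\DF$ and ensures that $\kappa\bullet f=\kappa\cdot f$ on $\DFd$. The $Y$-analog of \cite[Lemma~7.1]{CZZ} is essentially routine but should be verified rather than merely quoted. Apart from these points, the argument transposes Proposition~\ref{lem:grass} essentially line by line.
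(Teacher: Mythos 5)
Your proposal matches the paper's proof essentially verbatim: the paper proves Proposition~\ref{lem:grass} in detail and then disposes of Proposition~\ref{lem:Yact} with the single line ``Similarly, using \eqref{eq:square} and \eqref{prop2},'' which is precisely the line-by-line transposition you carry out ($Y_\al^2=\kappa Y_\al$ replacing $X_\al^2=-\kappa X_\al$, and $Y_JY_i=Y_J\kappa_i$ replacing $X_iY_J=Y_JX_i=0$, the latter producing the extra $\kappa\,\mYG{J}{v}$ in Case~3). The two details you flag --- the $Y$-analogue of \cite[Lemma~7.1]{CZZ} and the $W$-invariance of the constant $\kappa$ needed to move it across the non-$\Qq$-linear $\odot$-action --- are exactly what the paper leaves implicit, and your handling of them is correct.
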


In the case of an additive $F_a$ or a multiplicative $F_m$ formal group law the proofs of Propositions~\ref{lem:grass} and~\ref{lem:Yact} work for any flag variety $G/P_J$
(as the respective $X_{I_v}$ and $Y_{I_v}$ do not depend on the choice of a reduced sequence $I_v$ of $v$).
So we obtain the following formulas.

\begin{ex}\label{ex:Chowformula} In the case $F=F_a$, we have $X_i=Y_i$, $\kappa_\al=0$, and
	\[
	X_j\odot \naxi{v}=\left\{\begin{array}{ll} 0 ,& \text{if }\ell(s_jv)< \ell(v), \text{in which case }s_jv\in W^J;\\
	 \naxi{s_jv} , & \text{if } \ell(s_jv)> \ell(v) \text{ and } s_jv\in W^J;\\
0  , & \text{if } \ell(s_jv)> \ell(v) \text{ and } s_jv=vs_i \text{ for some }i\in J.\end{array}\right.
	\]	
 This formula was known before to Brion \cite{Br97}, Knutson \cite{Kn03}, and Tymoczko \cite{Ty09}.
\end{ex}

The next two formulas, which correspond to the case of $K$-theory, seem to be new; here we use $\kappa_\al=1$. 

\begin{ex}\label{ex:formulaK} We have
	\begin{align*}
	X_j\odot \nmxiX{v}&=\left\{\begin{array}{ll} -\nmxiX{v}, & \text{if }\ell(s_jv)< \ell(v), \text{in which case }s_jv\in W^J;\\
\nmxiX{s_jv},  & \text{if } \ell(s_jv)> \ell(v) \text{ and } s_jv\in W^J;\\
	0,   & \text{if } \ell(s_jv)> \ell(v) \text{ and } s_jv=vs_i \text{ for some }i\in J;\end{array}\right.\\
	Y_j\odot \nmxiY{v}&=\left\{\begin{array}{ll} \nmxiY{v}, & \text{if }\ell(s_jv)< \ell(v), \text{in which case }s_jv\in W^J;\\
\nmxiY{s_jv} , & \text{if } \ell(s_jv)> \ell(v) \text{ and } s_jv\in W^J;\\
	\nmxiY{v} ,  & \text{if } \ell(s_jv)> \ell(v) \text{ and } s_jv=vs_i \text{ for some }i\in J.\end{array}\right.
	\end{align*}	
\end{ex}

\subsection{An example}\label{secex} We give an example for the calculation of the parabolic Bott-Samelson classes in the case of the hyperbolic formal group law $F_{1,-u}$. Consider the Grassmannian of $2$-planes in ${\mathbb C}^4$, so $W=S_4$, $J=\{1,3\}$, and 
\[
W^J=\{{e},\,s_2,\,s_3s_2,\, s_1s_2,\,s_1s_3s_2,\,s_2s_1s_3s_2\}\,.
\]
This Grassmannian contains a singular Schubert variety, namely the one indexed by $s_1s_3s_2$. Recall that, since $W^J$ is fully commutative, we can index the parabolic Bott-Samelson classes by the elements of $W^J$. 

We summarize the recursive calculation~\eqref{calcbs} of the parabolic Bott-Samelson classes via the $\odot$-action in the following diagram; here the edges are labeled by the simple reflections used in the corresponding left multiplication. 
\[ \scriptstyle
{\xymatrix{
&{e}\ar[d]_{s_2}\\
&{s_2}\ar[dl]_{s_3}\ar[dr]^{s_1}\\
s_3s_2\ar[dr]_{s_1}&&s_1s_2\ar[dl]^{s_3}\\
&s_1s_3s_2\ar[d]_{s_2}\\
&s_2s_1s_3s_2
} }
\]
For simplicity, we use the notation $[ij]:=x_{-\al_{ij}}$, where $\al_{ij}$ is the root $\varepsilon_i-\varepsilon_j$. Writing $\zeta_v^J=\sum_{w\in W} q_w f_w$, we note that it is enough to record the coefficients $q_w$ for $w\in W^J$ (because $\zeta_v^J$ can be viewed as elements of $(\DFd)^{W_J}$, see \cite[Lemma 6.1]{CZZ1});  we do this by also using the above diagram. 

Recalling the expression of $\zeta_{e}^J$ in~\eqref{extopclass}, we proceed as follows. 
\[\scriptstyle
{\xymatrix{
&&{[13][14][23][24]}\ar[d]\\
&&{0}\ar[dl]\ar[dr]\\
\zeta_{e}^J=\!\!\!\!\!\!\!\!\!\!&{0}\ar[dr]&&{0}\ar[dl] & \stackrel{Y_2}{\longrightarrow}\qquad\\
&&{0}\ar[d]\\
&&{0}
} }
{\xymatrix{
&{[13][14][24]}\ar[d]\\
&{[12][14][34]}\ar[dl]\ar[dr]\\
{0}\ar[dr]&&{0}\ar[dl]& \stackrel{Y_1}{\longrightarrow}\qquad\\
&{0}\ar[d]\\
&{0}
} }\]
\[\scriptstyle
{\xymatrix{
&{[14][24](1+u[13][23])}\ar[d]\\
&{[14][34]}\ar[dl]\ar[dr]\\
{0}\ar[dr]&&{[24][34]}\ar[dl]\\
&{0}\ar[d]\\
&{0}
} }
\]
\[\scriptstyle
{\xymatrix{
&&{[13][14][24]}\ar[d]\\
&&{[12][14][34]}\ar[dl]\ar[dr]\\
\zeta_{s_2}^J=\!\!\!\!\!\!\!\!\!\!\!\!\!&{0}\ar[dr]&&{0}\ar[dl]& \!\!\!\!\!\!\!\!\!\stackrel{Y_3}{\longrightarrow}\;\;\\
&&{0}\ar[d]\\
&&{0}
} }
{\xymatrix{
&{[13][14](1+u[23][24])}\ar[d]\\
&{[12][14]}\ar[dl]\ar[dr]\\
{[12][13]}\ar[dr]&&{0}\ar[dl] &\!\!\!\!\!\!\!\!\!\stackrel{Y_1}{\longrightarrow}\;\;\\
&{0}\ar[d]\\
&{0}
} }
\]
\[\scriptstyle
{\xymatrix{
&{[14]+[23]-[14][23]+u[14][23]([13]+[24]+[13][24])}\ar[d]\\
&{[14]}\ar[ld]\ar[rd]\\
{[13]}\ar[dr]&&{[24]}\ar[dl] &\stackrel{Y_2}{\longrightarrow}\qquad\\
&{[23]}\ar[d]\\
&{0}
} }
\]
\[\scriptstyle
{\xymatrix{
&{1+u[14]([13]+[24]+[13][24])}\ar[d]\\
&{1+u[14]([12]+[34]+[12][34])}\ar[dl]\ar[dr]\\
{1+u[12][13]}\ar[dr]&&{1+u[24][34]}\ar[dl]\\
&{1}\ar[d]\\
&{1}
} }
\]

In the above calculations, we repeatedly used the following identities in the corresponding formal group algebra $\Ss$:
\begin{equation}\label{ident}
\tfrac{x_{\al+\be}}{x_\al}+\tfrac{x_\be}{x_{-\al}}=1+ux_\be x_{\al+\be} \,,\;\;\;\;\;\tfrac{x_{\al+\be}-x_\be}{x_\al}=1-x_\be+ux_\be x_{\al+\be}\,.
\end{equation}
The first identity is part of \cite[Lemma~4.2]{LZ1}; the second one follows in the same way, using the fact that, by the definition of the hyperbolic formal group law, we have
\[
x_{\al+\be}=x_\al+x_\be-x_\al x_\be+ux_\al x_\be x_{\al+\be}\,.
\]
We also used  $k_\al=1$.

In addition, in the calculation of $\zeta_{s_1s_3s_2}$, we used 
{\small \begin{align*} 
\tfrac{[13][14]}{[12]}+\tfrac{[23][24]}{[21]}&=[14]\tfrac{[13]-[23]}{[12]}+[23]\left(\tfrac{[14]}{[12]}+\tfrac{[24]}{[21]}\right)\\
&=[14](1-[23]+u[13][23])+[23](1+u[14][24])\,.
\end{align*}}

\subsection{A generalized Deodhar's resolution}\label{subsec:deod}
We apply the Hecke actions to construct a resolution of $\DFd$ using the algebras $\DFPd{J}$, for $J\subset \Pi$ and an arbitrary formal group law $F$. This generalizes Deodhar's acyclic complex for the Iwahori-Hecke algebra \cite[\S5]{D87}, and also offers a geometric interpretation of it. 

\begin{thm} We have a chain complex of $\DF$-modules (also of free $\Ss$-modules)
\begin{equation}\label{eq:mainseq}
\xymatrix{0\ar[r]& \DFd \ar[r]^-{\partial_0} & \underset{|J|=1}\bigoplus\DFPd{J}\ar[r]^-{\partial_1} & \underset{|J|=2}\bigoplus\DFPd{J}\ar[r]^-{\partial_2} & \cdots \ar[r]^-{\partial_{n-1}}& \DFPd{[n]}\ar[r]^-{\partial_n} & 0}\,,
\end{equation}
where the maps are defined as follows: 
\[
\partial_r=\underset{|J'|=|J|-1=r}\sum\epsilon(J, J')A_{J/J'}\,,
\]
with $\epsilon(J, J')=(-1)^{|\{i'\in J'|i'<i\}|}$ if $J= J'\cup \{i\}$. Moreover, its cohomology $H^r=0$ for $r\ge 1$. 
\end{thm}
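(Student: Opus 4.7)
The plan is to separate the proof into two parts: establishing that~\eqref{eq:mainseq} is a chain complex, and proving $H^r = 0$ for $r \ge 1$. For the first part, I would examine the composition $\partial_{r+1}\partial_r$ component by component. For any pair $J' \subset J$ with $|J| - |J'| = 2$, writing $J = J' \cup \{i, j\}$ with $i \ne j$, there are exactly two intermediate subsets of size $|J'|+1$, namely $J_1 = J' \cup \{i\}$ and $J_2 = J' \cup \{j\}$. By~\eqref{prop1}, both compositions $A_{J/J_k} \circ A_{J_k/J'}$ restrict to $A_{J/J'}$ on $\DFPd{J'}$, which is well-defined independently of the choice of coset representatives. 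The sign factors $\epsilon(J,J_k)\epsilon(J_k,J')$ for $k = 1, 2$ differ by $-1$ by the standard alternating convention based on ordinal position, yielding cancellation and hence $\partial_{r+1} \circ \partial_r = 0$.

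For acyclicity in positive degrees, I would construct an explicit contracting homotopy by selecting a distinguished index $i_0 \in [n]$. Whenever $i_0 \in J$, the containment $W_{J \setminus \{i_0\}} \subset W_J$ gives a natural inclusion $\iota_J : \DFPd{J} \hookrightarrow \DFPd{J \setminus \{i_0\}}$ of invariants. Define
\[
h_r : \bigoplus_{|J|=r+1} \DFPd{J} \longrightarrow \bigoplus_{|J'|=r} \DFPd{J'}
\]
to be a signed $\iota_J$ on the $\DFPd{J}$-summand when $i_0 \in J$, and zero otherwise, the sign being chosen to match the Koszul convention in $\partial$. The aim is then to verify the homotopy identity $\partial \circ h + h \circ \partial = \operatorname{id}$ on each summand, organizing the computation by whether or not $i_0$ belongs to the fixed $J$. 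This formally mimics the contraction of the simplex $\Delta^{n-1}$ onto the vertex $i_0$, which is why such a homotopy should exist.

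The main obstacle is precisely this verification, since $\iota_J$ and the trace maps $A_{J/J'}$ are not mutually inverse: the composition $A_{J/J \setminus \{i_0\}} \circ \iota_J$ involves the nontrivial normalization $\tfrac{1}{x_{J/J\setminus\{i_0\}}}$, and in general is not the identity. The required identity instead emerges only after summing contributions from all neighboring $J'$ and invoking the collapsing identities~\eqref{prop1} and~\eqref{prop2}. If the direct verification proves too delicate, an alternative route is to first prove acyclicity for the ambient complex built from $\QWd$ and its $W_J$-invariants; here $\QWd = \bigoplus_{w \in W} \Qq f_w$ splits into $W$-orbits, reducing the problem to a finite combinatorial calculation on each orbit, after which one descends to the $\DFd$-level using that $\DFPd{J}$ embeds into $\QWd$ as an $\Ss$-lattice, together with the freeness of $\DFd$ over $\Ss$.
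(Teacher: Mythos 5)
Your verification that \eqref{eq:mainseq} is a chain complex is essentially the paper's own argument (which defers to Deodhar's \cite[Theorem~5.1~(i)]{D87} and invokes \eqref{prop1} and sign cancellation); this part is fine.

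Your treatment of the acyclicity has a genuine gap. The proposed contracting homotopy $h_r$ via the inclusions $\iota_J\colon\DFPd{J}\hookrightarrow\DFPd{J\setminus\{i_0\}}$ does not satisfy $\partial h + h\partial = \operatorname{id}$, and this failure is not repaired by ``summing contributions from all neighboring $J'$.'' Tracking the $\DFPd{J}$-component of $(\partial h + h\partial)(\xi)$ for $\xi\in\DFPd{J}$ with $i_0\in J$, the only non-trivial contribution is $A_{J/J\setminus\{i_0\}}(\iota_J(\xi))$, and the other summands of $\partial h$  and $h\partial$ land in different components, so there is nothing to cancel against. This contribution corresponds to $\pi^*\pi_*\pi^*\xi$ for the $\mathbb{P}^1$-bundle $\pi\colon G/P_{J\setminus\{i_0\}}\to G/P_J$; for instance in the Chow-group case this is $\pi^*(\xi\cdot\pi_*1)=0$ because $\pi_*1$ vanishes, so $(\partial h+h\partial)(\xi)=0\neq\xi$. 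Your fallback route also has a missing step: acyclicity of the localized complex over $\Qq$ only yields $H^r\otimes_\Ss\Qq=0$, i.e.\ that $H^r$ is $\Ss$-torsion, and the ``descent'' from this to $H^r=0$ is not automatic, because nothing in the argument yet guarantees that $H^r$ is torsion-free.

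The missing idea is the one the paper's one-line proof is actually pointing at when it highlights Lemma~\ref{lem:partialD}: one should filter the entire complex by Bruhat length, using the $\Ss$-bases $\{\hat\zeta^J_{I_w}\}_{w\in W^J}$ and the triangularity statement of Lemma~\ref{lem:partialD} that $A_{J/J'}(\hat\zeta^{J'}_{I_v})$ equals $\hat\zeta^J_{I_v}$ when $v\in W^J$ and is a combination of strictly shorter terms when $v\notin W^J$. On the associated graded, one obtains for each $w\in W$ a copy of the (suitably truncated and augmented) simplicial cochain complex of the simplex on the set $\{i\in[n]\,:\,w\in W^{\{i\}}\}$, whose reduced cohomology vanishes; this is exactly Deodhar's argument, and the contribution of Lemma~\ref{lem:partialD} is that the lower-order ``error terms'' appearing in the general oriented theory are absorbed into lower filtration and hence do not disturb the associated-graded computation. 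A standard spectral sequence or five-lemma lift then gives $H^r=0$ for $r\ge 1$ over $\Ss$ directly, without any torsion-freeness hypothesis.
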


\begin{proof}
Although in a different context, the proof that this sequence is a chain complex follows Deodhar's proof of  \cite[Theorem~5.1~(i)]{D87} using the identity in~\eqref{prop1}. The only difference is concerned with the use of Lemma~\ref{lem:partialD}, where in the second case of the corresponding formula ($v\not \in W^J$) the evaluation is just $0$ in Deodhar's case.
\end{proof}

\begin{thm}If $F=F_a$ or $F_m$, then $\ker \partial_0$ is a free module of rank $1$ over $\Ss$, with basis $\{\mX{\emptyset}{w_\circ}\}$ where $w_\circ$ is the longest element of $W$.
\end{thm}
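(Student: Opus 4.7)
The map $\partial_0\colon \DFd\to \bigoplus_{i\in[n]} \DFPd{\{i\}}$ has $i$-th component $A_{\{i\}}=Y_i\bullet\_\,$, so $\ker\partial_0=\bigcap_{i\in[n]}\ker(Y_i\bullet\_)$. The plan is to establish the two inclusions $\Ss\cdot\mX{\emptyset}{w_\circ}\subseteq\ker\partial_0$ and $\ker\partial_0\subseteq\Ss\cdot\mX{\emptyset}{w_\circ}$ separately; freeness of rank one then follows from the fact that $\{\mX{\emptyset}{w}\}_{w\in W}$ is an $\Ss$-basis of $\DFd$ by \cite[Theorem~14.3]{CZZ1}.

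For the forward inclusion, the key observation is that $Y_i-X_i=\kappa_i$ (immediate from the definitions of $X_\al$ and $Y_\al$), which combined with the quadratic relation $X_i^2=-\kappa_iX_i$ from~\eqref{eq:square} gives $Y_iX_i=X_i^2+\kappa_iX_i=0$. When $F=F_a$ or $F_m$, the element $X_{w_\circ}=X_{I_{w_\circ}}$ is independent of the chosen reduced sequence, and since $\ell(s_iw_\circ)<\ell(w_\circ)$ for every $i$ we may factor $X_{w_\circ}=X_iX_{s_iw_\circ}$, yielding $Y_iX_{w_\circ}=0$. Applying this to $\pt$ gives $Y_i\bullet\mX{\emptyset}{w_\circ}=0$ for all $i$, so $\mX{\emptyset}{w_\circ}\in\ker\partial_0$; because scalar multiplication on values commutes with the $\bullet$-action, the whole $\Ss$-submodule generated by $\mX{\emptyset}{w_\circ}$ lies in $\ker\partial_0$.

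For the reverse inclusion, I would first derive a formula for the $\bullet$-action of $Y_i$ on the basis. Using $Y_i=X_i+\kappa_i$, the quadratic relation, and the reduced-word independence of $X_{w^{-1}}$ for $F=F_a$ or $F_m$, a short case analysis on whether $w^{-1}$ admits a reduced decomposition beginning with $s_i$ produces
\[
Y_i\bullet \mX{\emptyset}{w}=\begin{cases}\mX{\emptyset}{ws_i}+\kappa_i\mX{\emptyset}{w},&\ell(ws_i)>\ell(w),\\ 0,&\ell(ws_i)<\ell(w).\end{cases}
\]
Expanding any $f=\sum_w p_w\mX{\emptyset}{w}\in\ker\partial_0$ and matching the coefficient of $\mX{\emptyset}{v}$ in $Y_i\bullet f=0$ then gives a diagonal condition $\kappa_ip_v=0$ (from the term $w=v$ when $\ell(vs_i)>\ell(v)$) together with an off-diagonal condition $p_{vs_i}=0$ (from the term $w=vs_i$ when $\ell(vs_i)<\ell(v)$, after reindexing). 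Since $\kappa_i=0$ in the additive case and $\kappa_i=1$ in the multiplicative case, the combined constraint from the vanishing of $Y_i\bullet f$ is that $p_u=0$ for every $u$ with $\ell(us_i)>\ell(u)$. Imposing this for every $i\in[n]$ leaves only the unique element $u=w_\circ$, whose right descent set is $[n]$, and so $f=p_{w_\circ}\mX{\emptyset}{w_\circ}$, as desired.

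The principal delicacy is the coefficient bookkeeping in the reverse inclusion, where the two regimes $\ell(vs_i)>\ell(v)$ and $\ell(vs_i)<\ell(v)$ must be handled separately and the off-diagonal term reindexed correctly. No deeper obstacle is anticipated: once the formula for $Y_i\bullet\mX{\emptyset}{w}$ is established, the conclusion reduces to the elementary descent-set characterization of $w_\circ$, and the uniform appearance of $\kappa_i\in\{0,1\}$ for the two admissible FGLs makes both cases collapse to the same descent condition.
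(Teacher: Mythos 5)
Your proof is correct and takes essentially the same approach as the paper: both proofs hinge on the identity $Y_iX_i=0$ (equivalently, that $A_i$ annihilates $\mX{\emptyset}{v}$ whenever $\ell(vs_i)<\ell(v)$) together with the characterization of $w_\circ$ as the unique element whose right descent set is all of $[n]$. The paper opts to project each $\mX{\emptyset}{v}$ through $A_i$ to the parabolic basis $\{\mX{i}{v}\}_{v\in W^{\{i\}}}$ of $\DFPd{\{i\}}$ and kills coefficients componentwise in the direct sum, whereas you record the explicit $\bullet$-action formula for $Y_i$ on the $\{\mX{\emptyset}{w}\}$ basis of $\DFd$ and match coefficients there --- a stylistic variant of the same argument.
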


\begin{proof} 
In these cases, $X_{I_v}$ and $Y_{I_v}$ does not depend on the choice of $I_v$, so we can write $X_v$ and $Y_v$ respectively. If $\ell(v)\ge \ell(vs_i)$, then $X_{v^{-1}}=X_iX_{(vs_i)^{-1}}$, and hence, $A_i(X_{v^{-1}}\bullet \pt)=0$.  Let $g=\sum_{v\in W}d_v(X_{v^{-1}}\bullet \pt)\in \ker \partial_0$, then 
\[
0=\partial_0(g)=\sum_{i=1}^n\left(\sum_{v\in W^{\{i\}}}A_i\left(d_v(X_{v^{-1}}\bullet \pt)+d_{vs_i}(X_iX_{v^{-1}}\bullet \pt)\right)\right)=\sum_{i=1}^n\left(\sum_{v\in W^{\{i\}}}d_v\mX{i}{v}\right)\,.
\]
For any $v\neq  w_\circ$, there exists $i$ such that $v\in W^{\{i\}}$, so $d_v=0$ and we obtain $g=d_{w_\circ}(X_{w_\circ}\bullet \pt)$. 
\end{proof}

\begin{ex} 
Let $F=F_a$ and the root datum be of Dynkin type $A_2$. There are four $\DF$-modules, $\DFd, \DFPd{1}, \DFPd{2}, \DFPd{\Pi}\cong \Ss$. Moreover, $\epsilon(1,\emptyset)=\epsilon(2,\emptyset)=\epsilon(\Pi, 2)=1,\epsilon(\Pi, 1)=-1$, so the chain complex is 
\[
\xymatrix{0\ar[r] & \DFd \ar[rr]^-{(A_{1},  A_2)} & & \DFPd{1}\oplus \DFPd{2} \ar[rrr]^-{(-A_{\Pi/1})\oplus A_{\Pi/2}} && & \DFPd{\Pi} \ar[r] & 0.}
\]
We know that $A_{\Pi/1}(\zeta^{1}_v)=0,$ for $v\in W^{1}, v\neq e$ and $A_{\Pi/1}(\zeta^{1}_e)=\zeta^{\Pi}_e$, and similar conclusions hold if one replaces $1$ by $2$, so 
\[
\partial_1\left(\sum_{v\in W^1}d_v^1\zeta^1_v, \sum_{u\in W^2}d_u^2 \zeta^2_u\right)=d^1_e-d_e^2.
\]
So $\im \partial_1=\DFPd{\Pi}$, and 
\[
\ker \partial_1=\left\{\left(\sum_{v\in W^1}d_v^1\zeta^1_v, \sum_{u\in W^2}d_u^2 \zeta^2_u\right)\mid d_e^1=d_e^2\right\}.
\]
For any $z=\sum_{w\in W}d_w\zeta_w$, 
\[
\partial_0(z)=(A_1(z), A_2(z))=\left(\sum_{w\in W^1}d_w\zeta_w^1, \sum_{w'\in W^2}d_{w'}\zeta^2_{w'}\right)\,,
\]
so $\im \partial_0=\ker \partial_1$. Moreover, $\ker \partial_0=\{d\zeta_{w_\circ}\mid d\in \Ss\}\cong \Ss$. 
\end{ex}

\section{Deodhar's parabolic Hecke modules revisited}\label{sec:Deodmult}
The goal of this section is to give a new interpretation of Deodhar's modules over Iwahori-Hecke algebras in terms of the equivariant $K$-theory of flag varieties; a related dictionary is provided. All the results stated here are known, but we provide proofs in the new setup, which give new insights and are sometimes simpler.  

\subsection{The case of a multiplicative formal group law}
In this section we consider only the multiplicative FGL $F_m=x+y-xy$. We apply definitions and results of \S2 and \S3 to the case of $F_m$. We set $R=\Z[t,t^{-1}]$ and $Q_m=R[\La][\tfrac{1}{1-e^\al}|\al\in \Sigma]$. Let $Q_{W,m}=Q_m\rtimes R[W]$ be the twisted group algebra with basis $\de_w^m$ indexed by $w\in W$. We define the divided difference elements
\[
X_i^m=\tfrac{1}{1-e^{-\al_i}}(\de_i^m-1)\in Q_{W,m}, 
\]
and define $X_w^m$ as products of $X_i^m$'s corresponding to reduced sequences.  Let $\DFm$ be the subalgebra generated by $X_i^m$ and $R[\La]$. This is the affine 0-Hecke algebra  defined in \cite[Definition~2.8]{KK90} (see also \cite[Definition~5.7]{CZZ}).  The set $\{X^m_{w}\}_{w\in W}$ is a basis of $\DFm$ as an $R[\La]$-module. We can similarly define $\DFmd$ as the dual of $\DFm$, and define the $\bullet$ and $\odot$-actions of $\DFm$ on $\DFmd$, which commute. Indeed, there is a canonical embedding of $\DFm\subset \mathbf{D}_{F_m}$ (one can view $\mathbf{D}_{F_m}$ as a completion of $\DFm$) induced by the embedding $R[\La]\subset R[[\La]]\cong R[[\La]]_{F_m} $ sending $1-e^{-\la}$ to $x_\la$. 

Denote 
\[Y_J^m=\left(\sum_{w\in W_J}\de_w^m\right)\prod_{\al\in\Sigma_J^+}\tfrac{1}{1-e^\al}\in \DFm, \quad \pt^m_J=Y^m_J\bullet \pt^m=Y_J^m\odot\pt^m, \]
where 
\[\pt^m=\left(\prod_{\al>0}(1-e^\al)\right)f_e\in \DFmd\cong K_T(G/B). \]
We similarly define $Y^m_{J/J'}$, and denote $A^m_{J/J'}=Y^m_{J/J'}\bullet\_: \DFmd\to \DFmd$, which gives the composition $K_T(G/B)\to K_T(G/P_J)\to K_T(G/B)$. Note that $\pt^m_J\in K_T(G/P_{J})$ is the class of the structure sheaf of  $eP_J/P_J$.  We will frequently use that $\DFmdP{J}=Y_J^m\bullet \DFmd$ and $Y_J^m\bullet\pt^m=Y_J^m\odot\pt^m$, see~\eqref{bullodot}.

\subsection{Modules over the Hecke algebra}
In $\DFm$, we define elements \[
\tau_i=(t-t^{-1})X^m_i+t^{-1}\de^m_i,\quad i\in [n].
\]
Straightforward computations show that $\tau_i$ satisfy the braid relations and the quadratic relation $\tau_i^2=(t^{-1}-t)\tau_i+1$. So the subalgebra $H$  generated by $\tau_i$  for all $i$ is isomorphic to the classical Iwahori-Hecke algebra, and the subalgebra $\mathbf{H}\subset Q_{W,m}$ generated by $R[\La]$ and $H$ is isomorphic to the affine Hecke algebra.

\begin{lem}\label{lem:YJtau} 
If $j\in J$, then $ \tau_jY^m_J=t^{- 1}Y_J^m$. 
\end{lem}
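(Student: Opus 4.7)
The plan is to expand $\tau_j Y_J^m$ using its definition and then evaluate the two resulting summands separately. Writing
\[
\tau_j Y_J^m = (t-t^{-1}) X_j^m Y_J^m + t^{-1} \delta_j^m Y_J^m,
\]
the claim reduces to showing that $X_j^m Y_J^m = 0$ and $\delta_j^m Y_J^m = Y_J^m$ for $j \in J$.

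First, I would observe that $X_j^m$ is exactly the Demazure element $X_{\alpha_j}$ of the formal affine Demazure algebra $\mathbf{D}_{F_m}$ under the identification $x_\lambda = 1 - e^{-\lambda}$, and similarly $Y_J^m$ coincides with $Y_J$ built from the $F_m$-formal group algebra (the product $\prod_{\alpha \in \Sigma_J^+}(1-e^\alpha)^{-1}$ is just $1/x_J$ since $\beta \in \Sigma_J^+$ corresponds to $-\beta \in \Sigma_J^-$). Hence the identity $X_j Y_J = 0$ of~\eqref{prop2} applies verbatim to give $X_j^m Y_J^m = 0$, killing the first summand.

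Next, for the second summand, I would compute $\delta_j^m Y_J^m$ directly from the definition of $Y_J^m$ as $\bigl(\sum_{w \in W_J}\delta_w^m\bigr)\cdot x_J^{-1}$. Left multiplication by $\delta_j^m$ gives $\delta_j^m \sum_{w \in W_J}\delta_w^m = \sum_{w \in W_J}\delta_{s_j w}^m$, and since $j \in J$ implies $s_j \in W_J$, the map $w \mapsto s_j w$ permutes $W_J$; hence this sum equals $\sum_{w \in W_J}\delta_w^m$, and multiplying on the right by $x_J^{-1}$ we recover $Y_J^m$. Combining these two facts yields $\tau_j Y_J^m = t^{-1} Y_J^m$, as desired.

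There is no real obstacle here — the only subtlety is making sure the general push-pull identity $X_j Y_J = 0$ from~\eqref{prop2} transfers correctly to the $F_m$-setting (which it does, because $X_j^m$ and $Y_J^m$ are literally the images of $X_{\alpha_j}$ and $Y_J$ under the canonical embedding $D_m \hookrightarrow \mathbf{D}_{F_m}$), and that the permutation argument for $\delta_j^m Y_J^m$ respects the twisted multiplication correctly.
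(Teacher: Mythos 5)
Your proof is correct and follows essentially the same route as the paper: both reduce to showing $\delta_j^m Y_J^m = Y_J^m$ and $X_j^m Y_J^m = 0$. The only difference is in how you obtain $X_j^m Y_J^m = 0$: you invoke the embedding into $\mathbf{D}_{F_m}$ and the general identity \eqref{prop2}, whereas the paper deduces it immediately from the already-established fact $\delta_j^m Y_J^m = Y_J^m$ via $X_j^m = \tfrac{1}{1-e^{-\alpha_j}}(\delta_j^m - 1)$, making the embedding step unnecessary; your version is valid but introduces a small detour through machinery the paper avoids at this point.
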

\begin{proof} By definition we know that $\de^m_jY^m_J=Y^m_J$, so $X_j^mY_J^m=0$. The conclusion then follows.
\end{proof}

Via the embedding $H \subset\DFm$ and the $\odot$-action of $\DFm$ on $\DFmdP{J}$, there is a $\odot$-action of $H$ on $\DFmdP{J}$.
Denote $\calM^{J}:=H\odot \pt^m_J$, and $\calM:=\calM^{\emptyset}$.  Consider the map of $H$-modules
\[
\varphi^{J}: H \to \calM^{J},\quad z\mapsto z\odot \pt^m_J.
\]

\begin{lem}\label{lem:DFhH}
The map $\varphi:H\to \calM,\quad  z\mapsto z\odot \pt^m$ is an isomorphism of $H$-modules.
\end{lem}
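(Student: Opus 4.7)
The map $\varphi$ is tautologically surjective, since $\calM := H \odot \pt^m$ is defined as its image. It therefore suffices to verify $H$-linearity and injectivity.

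For $H$-linearity, I would invoke Lemma~\ref{actcommute}: the $\odot$-action of $\DFm$ on $\DFmd$ is a genuine left action, so for $h, z \in H \subseteq \DFm$ we have
\[
\varphi(hz) \;=\; (hz) \odot \pt^m \;=\; h \odot (z \odot \pt^m) \;=\; h \odot \varphi(z),
\]
with $H$ acting on itself by left multiplication and on $\calM$ by $\odot$.

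The main step is injectivity. Suppose $z \in H$ satisfies $z \odot \pt^m = 0$. Unwinding the definition \eqref{eq:odot} of $\odot$, this reads $\iota(z) \bullet \pt^m = 0$. I would then invoke (the multiplicative version of) Theorem~\ref{thm:DcFdbasis}, which asserts that $\DFmd$ is a free $\DFm$-module of rank one with basis $\pt^m$ via the $\bullet$-action; this forces $\iota(z) = 0$, and hence $z = 0$ since $\iota$ is an involution.

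The main potential obstacle is the bookkeeping between $\DFm$ and its completion $\mathbf{D}_{F_m}$, since Theorem~\ref{thm:DcFdbasis} is stated for the latter. I expect this to be harmless: the embedding $\DFm \hookrightarrow \mathbf{D}_{F_m}$ identifies $\pt^m$ with the class in~\eqref{xpi} and is compatible with the $\bullet$-action, so the freeness statement transfers. Alternatively, one can prove injectivity directly: writing $z = \sum_{w\in W} q_w \de_w^m$ in $Q_{W,m}$, Lemma~\ref{lem:Heckeactions} gives $(q_w \de_w^m) \odot \pt^m = q_w\, w(x_\Pi)\, f_w$, so distinct $w$ contribute to distinct dual basis vectors and $z \odot \pt^m = 0$ immediately implies each $q_w = 0$.
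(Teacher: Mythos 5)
Your proof is correct and takes the same route as the paper, which simply cites the freeness of $\DFmd$ as a rank-one $\DFm$-module with basis $\pt^m$ (from \cite[Theorem~10.13]{CZZ1}) to conclude that $\varphi$ is an embedding. Your alternative direct argument via Lemma~\ref{lem:Heckeactions}, expanding $z$ in the $\de_w^m$-basis and observing that $z\odot\pt^m=\sum_w q_w\,w(x_\Pi)f_w$ with the $w(x_\Pi)$ invertible in $Q_m$, is a nice way to sidestep the completion bookkeeping you flagged, though the paper takes the cited freeness for granted and does not address it.
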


\begin{proof} 
From \cite[Theorem~10.13]{CZZ1} we know that $\DFmd$ is a free $\DFm$-module with basis $\{\pt^m\}$, so $\varphi$ is an embedding, hence the conclusion follows.
\end{proof}

\begin{lem}\label{lem:mdiag}We have $\calM^{J}\subset \DFmdP{J}$, and for any $J'\subset J$, we have the following commutative diagram, where the vertical map in the middle is an $H$-module homomorphism.
\[
\xymatrix{ H \ar@{->>}[rrr]^{\varphi^{J'}}\ar@{->>}[drrr]_{\varphi^{J}} &&&\calM^{J'}\ar@{->>}[d]^{A^m_{J/J'}}\ar@{^(->}[r] & \DFmdP{J'}\ar@{->>}[d]^{A^m_{J/J'}}\\
 &&& \calM^{J}\ar@{^(->}[r] & \DFmdP{J}}
\]
\end{lem}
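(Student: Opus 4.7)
The plan is to prove the three assertions in order: the inclusion $\calM^J\subset\DFmdP{J}$, commutativity of the triangle, and the fact that $A^m_{J/J'}$ is $H$-equivariant for the $\odot$-action. The only ingredients I expect to need are: (a) the two Hecke actions commute (Lemma~\ref{actcommute}.(i), which carries over to $\DFm$ via the embedding $\DFm\hookrightarrow \mathbf{D}_{F_m}$), (b) the factorization $Y^m_{J/J'}\,Y^m_{J'}=Y^m_J$ from~\eqref{prop1}, and (c) the identity $\pt^m_J=Y^m_J\bullet\pt^m=Y^m_J\odot\pt^m$ from~\eqref{bullodot}.

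For the inclusion, I would take any $h\in H\subset\DFm$ and write
\[
h\odot\pt^m_J \;=\; h\odot(Y^m_J\bullet\pt^m)\;=\;Y^m_J\bullet(h\odot\pt^m)\;\in\;Y^m_J\bullet\DFmd\;=\;\DFmdP{J},
\]
where the second equality is the commutativity of $\bullet$ and $\odot$. This shows $\calM^J\subset\DFmdP{J}$.

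For commutativity of the triangle, I would unwind the definitions for $h\in H$ and compute
\[
A^m_{J/J'}\bigl(\varphi^{J'}(h)\bigr)\;=\;Y^m_{J/J'}\bullet(h\odot\pt^m_{J'})\;=\;h\odot\bigl(Y^m_{J/J'}\bullet Y^m_{J'}\bullet\pt^m\bigr)\;=\;h\odot\pt^m_J\;=\;\varphi^J(h),
\]
again using that $\bullet$ and $\odot$ commute together with~\eqref{prop1} in the form $Y^m_{J/J'}Y^m_{J'}=Y^m_J$. Surjectivity of the induced map $\calM^{J'}\twoheadrightarrow\calM^J$ on the left then follows automatically from surjectivity of $\varphi^J$.

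For the rightmost vertical map being $H$-linear with respect to the $\odot$-action, I would verify directly that for $h\in H$ and any $f\in\DFmdP{J'}$,
\[
A^m_{J/J'}(h\odot f)\;=\;Y^m_{J/J'}\bullet(h\odot f)\;=\;h\odot(Y^m_{J/J'}\bullet f)\;=\;h\odot A^m_{J/J'}(f),
\]
which is once more the commutativity of the two Hecke actions. The same computation shows $H$-linearity of $A^m_{J/J'}$ restricted to $\calM^{J'}$. I do not anticipate any genuine obstacle: every step is a formal consequence of the commutation of the two Hecke actions together with the multiplicativity of the push-pull elements, so the main task is simply to keep the bookkeeping of $\bullet$ versus $\odot$ straight.
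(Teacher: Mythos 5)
Your proof is correct and follows essentially the same route as the paper's: both derivations rest entirely on the commutativity of the $\bullet$- and $\odot$-actions (Lemma~\ref{actcommute}.(i)), together with $Y^m_J\bullet\pt^m=\pt^m_J$ and the factorization $Y^m_{J/J'}Y^m_{J'}=Y^m_J$. You merely spell out the triangle's commutativity a bit more explicitly than the paper does, but the underlying argument is identical.
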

\begin{proof}
Since the $\odot$-action commutes with the $\bullet$-action, we have
\[
\calM^{J}=H\odot \pt_J^m= H\odot  Y_J^m\bullet \pt^m=Y_J^m\bullet (H\odot \pt^m)\subsetneq Y_J^m\bullet \DFmd= \DFmdP{J}.
\]  
Since $A_{J/J'}^m=Y^m_{J/J'}\bullet\_$, so the commutativity between $\bullet$ and $\odot$ also implies that $A^m_{J/J'}$ is a map of $H$-modules.
\end{proof}

Let $\tau_v$ be  the basis of $H$. For any $v\in W^J$, we define
\[
\frakm^{J}_v= \varphi^{J}(\tau_v)=\tau_v\odot \pt^m_J\in \calM^{J},\quad \frakm_e^{J}=\pt^m_J, \quad \frakm_v:=\frakm^{\emptyset}_{v}\in \calM.
\]

\begin{lem}\label{lem:tauaction}
The $\odot$-action of $H$ on the basis $\frakm_v^{J}$ is given by: 
\[
\tau_j \odot\frakm^{J}_v=\left\{\begin{array}{ll} (t^{-1}-t)\frakm^{J}_{v}+\frakm^{J}_{s_jv}, & \text{if }\ell(s_jv)< \ell(v)\,,\;\;\; \text{in which case }s_jv\in W^J;\\
\frakm^{J}_{s_jv} , & \text{if } \ell(s_jv)> \ell(v) \text{ and } s_jv\in W^J;\\
t^{-1}\frakm^{J}_{v},	   & \text{if } \ell(s_jv)> \ell(v) \text{ and } s_jv=vs_i \text{ for some }i\in J.\end{array}\right.
\]
Consequently,  $\tau_{vw} \odot\frakm^{J}_{e}=t^{- \ell(w)}\frakm^J_{v}$ for $v\in W^J, w\in W_J$, and $\{\frakm_v^{J}\}_{v\in W^J}$ is a basis of $\calM^{J}$ as an $R$-module.
\end{lem}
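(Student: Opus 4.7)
The plan is to verify the three-case recursion for $\tau_j\odot \frakm_v^J = (\tau_j\tau_v)\odot \pt_J^m$ by direct manipulation in the Hecke algebra $H$, exploiting the quadratic relation $\tau_j^2 = (t^{-1}-t)\tau_j + 1$, the braid-compatible multiplication of the $\tau_v$, and Lemma~\ref{lem:YJtau}. Recall that $\odot$ is a left action of $\DFm$ on $\DFmd$, so $(z_1 z_2)\odot f = z_1\odot(z_2\odot f)$, and that $\pt_J^m = Y_J^m\odot\pt^m$. In Case~1 ($\ell(s_jv)<\ell(v)$, whence $s_jv\in W^J$ by Lemma~\ref{lem:parw}(i)), I write $\tau_v = \tau_j\tau_{s_jv}$ and apply the quadratic relation:
\[
\tau_j\tau_v = \tau_j^2\,\tau_{s_jv} = \bigl((t^{-1}-t)\tau_j + 1\bigr)\tau_{s_jv} = (t^{-1}-t)\tau_v + \tau_{s_jv}.
\]
In Case~2 ($\ell(s_jv)>\ell(v)$, $s_jv\in W^J$) I simply have $\tau_j\tau_v = \tau_{s_jv}$. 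In Case~3 ($\ell(s_jv)>\ell(v)$, $s_jv=vs_i$ for some $i\in J$), both reduced expressions yield $\tau_j\tau_v = \tau_{s_jv} = \tau_v\tau_i$, and Lemma~\ref{lem:YJtau} together with the $R$-linearity of $\odot$ gives
\[
\tau_i\odot\pt_J^m = (\tau_i Y_J^m)\odot\pt^m = t^{-1}(Y_J^m\odot\pt^m) = t^{-1}\pt_J^m,
\]
whence $\tau_j\odot\frakm_v^J = \tau_v\odot(t^{-1}\pt_J^m) = t^{-1}\frakm_v^J$.

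For the consequence, given $v\in W^J$ and $w\in W_J$, length additivity $\ell(vw)=\ell(v)+\ell(w)$ forces $\tau_{vw}=\tau_v\tau_w$. Taking a reduced decomposition $w = s_{i_1}\cdots s_{i_k}$ with each $i_l\in J$ and iterating the identity $\tau_i\odot\pt_J^m = t^{-1}\pt_J^m$ from Case~3 (with $v=e$) produces $\tau_w\odot\pt_J^m = t^{-\ell(w)}\pt_J^m$; applying $\tau_v\odot$ to both sides gives $\tau_{vw}\odot\frakm_e^J = t^{-\ell(w)}\frakm_v^J$.

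For the basis claim, spanning is immediate: since $\{\tau_{v'}\}_{v'\in W}$ is an $R$-basis of $H$, $\varphi^J$ is onto by definition, and every $v'\in W$ factors uniquely as $v'=vw$ with $v\in W^J$, $w\in W_J$, the consequence reduces each generator $\frakm_{v'}^J$ to an $R$-multiple of some $\frakm_v^J$ with $v\in W^J$. For linear independence I would use the embedding $\calM^J\hookrightarrow \DFmdP{J}$ from Lemma~\ref{lem:mdiag} together with the Bott-Samelson $\Ss$-basis $\{\hat\zeta^J_{I_v}\}_{v\in W^J}$ of $\DFmdP{J}$: expanding $\tau_i = (t - t^{-1}e^{-\alpha_i})X_i^m + t^{-1}$ (obtained from $\delta_i^m = (1-e^{-\alpha_i})X_i^m + 1$) and iterating via the $\odot$-recursion for the $X_i^m$ recorded in Theorem~\ref{thm:main}, one expresses each $\frakm_v^J$ as an $\Ss$-linear combination of the $\hat\zeta^J_{I_u}$ which is triangular with respect to the Bruhat order on $W^J$, and whose diagonal entries specialize under the augmentation to nonzero powers of $(t-t^{-1})$; the change-of-basis matrix therefore has nonzero determinant in $\Ss$, forcing $\Ss$- and hence $R$-linear independence of $\{\frakm_v^J\}_{v\in W^J}$.

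The main obstacle I anticipate is precisely this triangularity bookkeeping, since it couples the Hecke-algebraic expansion of $\tau_v$ with the already-known $\odot$-recursion for the Bott-Samelson classes. A cleaner fallback is to identify $\ker\varphi^J$ directly as the left ideal of $H$ generated by $\{\tau_j - t^{-1}:j\in J\}$ (the inclusion follows from Case~3 applied with $v=e$); combining this with the classical fact that the corresponding quotient of $H$ is an $R$-free module of rank $|W^J|$, and with the spanning statement above, delivers the basis assertion without further calculation.
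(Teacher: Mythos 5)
Your verification of the three cases and of the $\tau_{vw}$-formula is faithful to the paper's argument: you use the same factorizations $\tau_v=\tau_j\tau_{s_jv}$ and $\tau_j\tau_v=\tau_v\tau_i$, the same quadratic relation, and the same appeal to Lemma~\ref{lem:YJtau}, and you spell out the induction that the paper compresses into ``follows inductively.'' That part is fine.

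The part that is genuinely different, and where your proposal has a real problem, is the basis assertion. The paper essentially folds it into the same terse remark, but your two suggested completions are not on equal footing. Your main route (triangularity: expand $\tau_i=(t-t^{-1}e^{-\alpha_i})X_i^m+t^{-1}$ and compare with the $X$-Bott--Samelson basis of $\DFmdP{J}$) is conceptually correct, and is in fact the argument that would have to be supplied; note that the cancellation $(pz)\odot f = p\,(z\odot f)$ for $p\in\Ss$ does hold even though $\odot$ fails to be $\Qq$-linear in the module variable, so your expansion is legitimate. However, you explicitly leave the triangularity ``bookkeeping'' unverified. More seriously, your ``cleaner fallback'' is circular. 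Case~3 with $v=e$ gives only the inclusion $H\cdot(\tau_j-t^{-1}\,:\,j\in J)\subseteq\ker\varphi^J$, not the reverse inclusion. Since $H/I$ is $R$-free of rank $|W^J|$ with basis $\{\tau_v+I\}_{v\in W^J}$, the induced map $H/I\twoheadrightarrow\calM^J$ sends that basis to $\{\frakm^J_v\}_{v\in W^J}$, and its injectivity is \emph{exactly} the linear independence you are trying to prove; knowing that the source is free of the ``right'' rank says nothing about the kernel of a surjection onto $\calM^J$. So the fallback does not deliver the basis assertion ``without further calculation'' --- you still need an independent argument (e.g., comparing leading coefficients of $\frakm_v^J$ at $f_v$ in $\QWd$, or the BS-triangularity you set aside) to close it.

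﻿
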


\begin{proof}
By definition, we have 
\[
\tau_j\odot\frakm^{J}_ v=\tau_j \odot(\tau_v\odot \pt^m_J)=(\tau_j\tau_v)\odot \pt^m_J.
\]
If $\ell(s_jv)< \ell(v)$, then $s_jv\in W^J$ and $\tau_v=\tau_j\tau_{s_jv}$. Therefore, 
\[
\tau_j\tau_v=(\tau_j)^2\tau_{s_jv}=((t^{-1}-t)\tau_j+1)\tau_{s_jv}=(t^{-1}-t)\tau_v+\tau_{s_jv},
\]
which proves the first case.
	
If $\ell(s_jv)> \ell(v)$ and $s_jv\in W^J$, then $\tau_j\tau_v=\tau_{s_jv}$, hence $\tau_j \odot\frakm^{J}_ v=\frakm^{J}_{s_jv}$.
	
If $\ell(s_jv)> \ell(v)$ and $s_jv=vs_i$ with $i\in J$, then $\tau_j\tau_{v}=\tau_{v}\tau_{i}$. We have 
\begin{gather*}\tau_i\odot \pt^m_J=\tau_i\odot Y^m_J\odot \pt^m=(\tau_i Y_J^m)\odot\pt^m\overset{\sharp}=  t^{- 1} Y^m_J\odot \pt^m=  t^{- 1}\pt^m_J,\end{gather*}
where $\sharp$ follows from Lemma~\ref{lem:YJtau}, so $\tau_j\odot\frakm^{J}_ v= t^{- 1}\frakm^{J}_ v$. 

The last part follows inductively.
\end{proof}

\begin{cor}\label{cor:Am} For $J\supset J'$, we have $A^m_{J/J'}(\frakm^{J'}_v)= t^{- \ell(w)}\frakm^{J}_{u}$ if $v\in W^{J'}$ with $v=uw, u\in W^J, w\in W_J$. In particular, if $v \in W^{J}$, then 
\[
A^m_{J/J'}(\frakm_{v}^{J'})=\frakm^{J}_{v}.
\]
\end{cor}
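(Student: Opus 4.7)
The plan is to assemble the corollary from three ingredients already at our disposal: the compatibility of $A^m_{J/J'}$ with the $\odot$-action, the multiplicativity of the push-pull elements $Y^m$, and the consequence of Lemma~\ref{lem:tauaction}.

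First I would rewrite the left-hand side using that $\frakm^{J'}_v = \tau_v \odot \pt^m_{J'}$ and that $A^m_{J/J'}$ is an $H$-module homomorphism (Lemma~\ref{lem:mdiag}, recalling that $H$ acts via $\odot$ while $A^m_{J/J'}$ is defined via $\bullet$). This yields
\[
A^m_{J/J'}(\frakm^{J'}_v)=A^m_{J/J'}(\tau_v\odot\pt^m_{J'})=\tau_v\odot A^m_{J/J'}(\pt^m_{J'}).
\]

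Next I would identify $A^m_{J/J'}(\pt^m_{J'})$ with $\pt^m_J=\frakm^J_e$. By definition, $\pt^m_{J'}=Y^m_{J'}\bullet\pt^m$ and $A^m_{J/J'}=Y^m_{J/J'}\bullet\_$, so
\[
A^m_{J/J'}(\pt^m_{J'})=Y^m_{J/J'}Y^m_{J'}\bullet\pt^m=Y^m_J\bullet\pt^m=\pt^m_J,
\]
where the middle equality is the multiplicative formal group law analogue of the factorization in \eqref{prop1}. Combining the two previous displays, $A^m_{J/J'}(\frakm^{J'}_v)=\tau_v\odot\frakm^J_e$.

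Finally, write $v=uw$ with $u\in W^J$ and $w\in W_J$; since $u$ is a minimal coset representative, $\ell(v)=\ell(u)+\ell(w)$, so $\tau_v=\tau_u\tau_w$ in $H$. The consequence stated at the end of Lemma~\ref{lem:tauaction} then gives $\tau_{uw}\odot\frakm^J_e=t^{-\ell(w)}\frakm^J_u$, proving the general formula. The ``in particular'' case is the trivial specialization $w=e$, so $\ell(w)=0$ and $A^m_{J/J'}(\frakm^{J'}_v)=\frakm^J_v$ for $v\in W^J$.

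There is no real obstacle here: the content is entirely packaged in Lemmas~\ref{lem:mdiag} and~\ref{lem:tauaction} together with the elementary factorization $Y^m_J=Y^m_{J/J'}Y^m_{J'}$. The only point requiring a moment's care is the length-additivity $\ell(uw)=\ell(u)+\ell(w)$ for $u\in W^J$, $w\in W_J$, which is a standard property of minimal coset representatives and is what allows $\tau_v$ to split as $\tau_u\tau_w$ inside $H$.
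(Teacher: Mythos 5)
Your proposal is correct and follows essentially the same route as the paper's proof: commute $A^m_{J/J'}$ past the $\odot$-action (the paper cites Lemma~\ref{actcommute} directly, you cite its packaged consequence Lemma~\ref{lem:mdiag}), factor $Y^m_{J/J'}Y^m_{J'}=Y^m_J$ via \eqref{prop1}, and finish with the last statement of Lemma~\ref{lem:tauaction}. The remark on length-additivity $\ell(uw)=\ell(u)+\ell(w)$ is the same implicit ingredient the paper uses when writing $\tau_v=\tau_{uw}$.
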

\begin{proof} We have
\begin{align*}
A_{J/J'}^m(\frakm_v^{J'}) &=Y^m_{J/J'}\bullet \tau_v\odot Y^m_{J'}\bullet \pt^m \overset{\sharp_1} =\tau_v\odot Y^m_{J/J'}\bullet Y^m_{J'}\bullet\pt^m\\
&\overset{\sharp_2}=\tau_v\odot\pt^m_J=\tau_{uw}\odot\frakm^{J}_{e}\overset{\sharp_3}= t^{-\ell(w)}\frakm^{J }_{u},
\end{align*}
where $\sharp_1$ follows from Lemma~\ref{actcommute}, $\sharp_2$ from \eqref{prop1}, and $\sharp_3$ from  Lemma~\ref{lem:tauaction}. 
\end{proof}

\begin{rem}\label{rem:taum}Beside $H$, there is another copy of the Iwahori-Hecke algebra inside $\DFm$, denoted $H^-$. It is generated by \[\tau_i^-=(t-t^{-1})X_i^m-t\de_i^m\in \DFm.\] The elements $\frakm^{J,-}_{v}:=\tau_v^-\odot\pt_J^m$, for $v\in W^J$ form a basis of $\calM^{J,-}:=H^-\odot\pt^m_J\subset \DFmdP{J}$ as an $R$-module.  We will sometimes write $H^+$ for $H$, and correspondingly $\tau_v^+$, $\calM^{J,+}$, and $\frakm^{J,+}_{v}$ for $\tau_v$, $\calM^{J}$, and $\frakm^{J}_{v}$. 

Let $\sigma$ be the involution of $R$ defined by $t\mapsto -t^{-1}$. It induces an automorphism of $Q_m$ by $t\mapsto t^{-1}$, $x_\la\mapsto x_\la$ and, hence, an automorphism of $Q_{W,m}$ with $\sigma(\de_w^m)=\de_w^m$. By definition $\sigma(X_i^m)=X_i^m$, so $\sigma(\DFm)=\DFm$. We have $\sigma(\tau_i^\pm)=\tau_i^\mp$, so $\sigma$ is an isomorphism between $H^+$ and $H^-$. Moreover, it induces an automorphism of $(\DFm^\star)^{W_J}$ by  $z\odot \pt^m_J\mapsto \sigma(z)\odot \pt^m_J$, $z\in \DFm$. Then $\sigma(\frakm^{J,+}_v)=\frakm^{J,-}_v$, so $\sigma$ induces an isomorphism between $\calM^{J,+}$ and $\calM^{J,-}$.

Observe that Lemmas~\ref{lem:DFhH} and~\ref{lem:mdiag} hold for the ${(\cdot)}^-$ version, by means of the isomorphism $\sigma$.   The $t^{-1}$ in Lemma~\ref{lem:YJtau} should be replaced by $-t$, the $t^{-1}$ (resp. $t^{-\ell(w)}$) in the third line of Lemma~\ref{lem:tauaction} should be replaced by $-t$, (resp. $(-t)^{\ell(w)}$), and the $t^{-\ell(w)}$ in Corollary~\ref{cor:Am} should be replaced by $(-t)^{\ell(w)}$. 
\end{rem}

\begin{rem}\label{tradgen} The traditional generators of the Iwahori-Hecke algebra $H$ are $T_i^\pm:=t^{-1}\tau^\pm_i$. In other words,
\[
T^+_i=(1-t^{-2})X_i^m+t^{-2}\de_i^m=(1-q)X_i^m+q\de_i^m, \quad T_i^-=(1-q)X_i^m-\de_i^m, \quad q:=t^{-2}\,,
\]
and they satisfy $(T_i^\pm)^2=(q-1)T^\pm_i+q$.  Moreover, we have bases $\{T_w^+\:\mid\:w\in W\}$ and $\{T_w^-\:\mid\:w\in W\}$ for $H^+$ and $H^-$, respectively.
\end{rem}
 
\subsection{Deodhar's modules revisited}
We now explain the correspondence with the setup in Deodhar's work \cite{D87}. Consider a generic Iwahori-Hecke algebra with generators $T_i$. For each $J\subset \Pi$, Deodhar defined two $H$-modules $M^{J}$ using explicit $\Z[q,q^{-1}]$-basis $\{m^{J}_v\}_{v\in W^J}$, and  defined the parabolic Kazhdan-Lusztig basis. Let  $H_J\subset H$ be the Iwahori-Hecke algebra associated to the sub-system determined by $J\subset \Pi$, which acts on $\Z[q,q^{-1}]$ by the maps  $h_\pm:H_J\to \Z[q, q^{-1}]$  sending $T_i$ to $q$ and $-1$, respectively (here $h_+(T_i)=q$ and  $h_-(T_i)=-1$). Then Deodhar's modules are  $M^{J,\pm}=Ind_{H^J}^H\Z[q,q^{-1}]$, and the basis elements are $m^{J,\pm}_v=T_v\otimes 1$.

We now observe that the $\odot$-actions of $H^\pm$ on the bases $\frakm_v^{J,\pm}$ (see Lemma~\ref{lem:tauaction}) match the classical actions of the Iwahori-Hecke algebra on the parabolic modules $M^{J,\pm}$. So we have the following correspondence.

\begin{lem} With $t^{-2}=q$ and $H^\pm \cong H$, there are isomorphisms of $H$-modules
\[
M^{J, +}\stackrel\cong\to \mathcal{M}^{J,+}\quad\text{ and }\quad M^{J,-}\stackrel\cong\to \calM^{J,-} 
\]
which map $m^{J, +}_v\mapsto T_v^+\odot\pt^m_J=t^{-\ell(v)}\frakm^{J,+}_{v}$ and $m^{J,-}_v\mapsto T_v^-\odot\pt_J^m=t^{-\ell(v)}\frakm^{J,-}_{v}$ for any $v\in W^J$, respectively. 
Moreover, Deodhar's maps $\varphi_J$ are precisely our $\varphi^J$, and Deodhar's $\varphi_{J,L}$ is precisely our $A_{J/J'}^m$ where $L=J'\subset J$. 
\end{lem}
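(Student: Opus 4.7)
The plan is to construct each isomorphism via Frobenius reciprocity, taking $\pt^m_J = \frakm^{J,\pm}_e$ as the image of the generator $1 \otimes 1 \in M^{J,\pm}$. Since $M^{J,+} = \mathrm{Ind}_{H_J}^H \Z[q,q^{-1}]$ with $H_J$ acting on $\Z[q,q^{-1}]$ via $T_i \mapsto q$, giving an $H$-module map $M^{J,+} \to \calM^{J,+}$ is the same as exhibiting an element $\xi \in \calM^{J,+}$ with $T_i^+ \odot \xi = q\xi$ for all $i \in J$. I would take $\xi = \pt^m_J$: the choice $v = e \in W^J$ together with $i \in J$ puts us in the third case of Lemma~\ref{lem:tauaction} (since $s_i \cdot e = e \cdot s_i$ with $i \in J$), giving $\tau_i \odot \pt^m_J = t^{-1}\pt^m_J$, and combining this with $T_i^+ = t^{-1}\tau_i$ from Remark~\ref{tradgen} yields $T_i^+ \odot \pt^m_J = t^{-2}\pt^m_J = q\pt^m_J$, as required.

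The resulting $H$-map then sends $m^{J,+}_v = T_v \otimes 1 \mapsto T_v^+ \odot \pt^m_J = t^{-\ell(v)}\tau_v \odot \pt^m_J = t^{-\ell(v)} \frakm^{J,+}_v$ for $v \in W^J$, where the last equality is the ``consequently'' part of Lemma~\ref{lem:tauaction} applied with $w = e$. Since $\{m^{J,+}_v\}_{v \in W^J}$ and $\{\frakm^{J,+}_v\}_{v \in W^J}$ are $R$-bases of the source and target respectively, and the map identifies them up to the invertible scalars $t^{-\ell(v)}$, it is an isomorphism. The minus version proceeds identically using the $(\cdot)^-$-analogues of Lemmas~\ref{lem:YJtau} and~\ref{lem:tauaction} recorded in Remark~\ref{rem:taum}: for $i \in J$ one obtains $\tau_i^- \odot \pt^m_J = -t\pt^m_J$, and hence $T_i^- \odot \pt^m_J = t^{-1}(-t)\pt^m_J = -\pt^m_J = h_-(T_i)\pt^m_J$.

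The remaining identifications of Deodhar's maps are then essentially formal. The map $\varphi_J \colon H \to M^J$ sends $z \mapsto z \cdot (1 \otimes 1)$, which under the just-constructed isomorphism corresponds exactly to $z \odot \pt^m_J = \varphi^J(z)$. For $\varphi_{J,L}$ with $L \subset J$, Corollary~\ref{cor:Am} computes $A^m_{J/L}(\frakm^L_v) = t^{-\ell(w)}\frakm^J_u$ for the decomposition $v = uw$ with $u \in W^J$, $w \in W_J$; translating through the identifications $m^\bullet_x \leftrightarrow t^{-\ell(x)}\frakm^\bullet_x$ on both sides and using $\ell(v) = \ell(u) + \ell(w)$, the overall scalar becomes $t^{-\ell(w)+\ell(u)-\ell(v)} = t^{-2\ell(w)} = q^{\ell(w)}$, which matches Deodhar's formula for $\varphi_{J,L}$.

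The only real obstacle is the careful bookkeeping of three simultaneous rescalings: the substitution $q = t^{-2}$, the normalization $T_i^\pm = t^{-1}\tau_i^\pm$, and the sign asymmetry $\tau_i^+ \odot \pt^m_J = t^{-1}\pt^m_J$ versus $\tau_i^- \odot \pt^m_J = -t\pt^m_J$, to ensure that the two character values $h_\pm(T_i) \in \{q, -1\}$ come out correctly in the plus and minus cases and that the $q^{\ell(w)}$ factor in Deodhar's $\varphi_{J,L}$ is reproduced exactly.
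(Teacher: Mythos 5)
Your proof is correct and matches what the paper intends; the paper gives no explicit proof here, merely observing that the $\odot$-actions on $\frakm^{J,\pm}_v$ in Lemma~\ref{lem:tauaction} agree with the classical actions on $M^{J,\pm}$, which is exactly what you verify. Your Frobenius-reciprocity framing (producing the map from the single relation $T_i^\pm\odot\pt^m_J = h_\pm(T_i)\,\pt^m_J$ for $i\in J$, then checking it carries basis to basis) is a cleaner packaging of the same content, and your bookkeeping of the rescalings $q=t^{-2}$, $T_i^\pm=t^{-1}\tau_i^\pm$, and the $q^{\ell(w)}$ factor in $\varphi_{J,L}$ is accurate throughout.
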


\begin{rem}
We also have
\[t\leftrightarrow v, \quad \tau_i^+\leftrightarrow H_i, \quad\calM^{J,+}\leftrightarrow \mathcal{M}, \quad \calM^{J,-}\leftrightarrow \mathcal{N}\]
where the right-hand side notations are taken from Soergel~\cite{So97}. 
\end{rem}

\subsection{The Kazhdan-Lusztig basis}  We recall several well-known facts about the {\em Kazhdan-Lusztig basis}. There is an involution on the Iwahori-Hecke algebra $H$  defined as 
\[
\overline{t}=t^{-1}, ~\overline{\tau_i}=\tau_i^{-1}, ~\overline{z_1z_2}=\overline{z_1}\cdot \overline{z_2}, ~z_1, z_2\in H.
\]
 We use $\gamma_v$ to denote the Kazhdan-Lusztig basis, that is, $\gamma_v$ is invariant under the involution and 
\[\gamma_v\in \tau_v+\sum_{w<v}t\Z[t]\tau_w.\]
In particular, we have $\gamma_i=\tau_i+t$.
We write
\[
\gamma_v=\sum_{w\le v} t^{\ell(v)-\ell(w)}P_{w,v}\tau_w, \quad P_{w,v}\in \Z[t^{-1}],
\]
where $P_{w,v}$ are the {\em Kazhdan-Lusztig polynomials}; these are traditionally written as polynomials in $q$, where $q=t^{-2}$, as mentioned above. It is known that $P_{v,w_J}=1$ for all $v\le w_J$, so
\[
\gamma_J:=\gamma_{w_J}=\sum_{w\in W_J}t^{\ell(w_J)-\ell(w)}\tau_w\,.
\]
Defining
\[
\gamma_{J/J'}:=\sum_{v\in W^{J'}\cap W_J}t^{\ell(w_{J/J'})-\ell(v)}\tau_v\in H\,,
\]
it is not difficult to see that $\gamma_{J}=\gamma_{J/J'}\gamma_{J'}$.

The following is a classical result restated and reproved in a simple way using our setup; it is needed in the proof of Theorem~\ref{thm:YPi} below.

\begin{prop} \label{prop:gammaJ} The following hold.
\begin{enumerate}
\item[{\rm (i)}] $\de_i^m(1+t^{-1}\tau_i)=1+t^{-1}\tau_i$.
\item[{\rm (ii)}] $
\gamma_{w_\circ}=t^{-\ell(w_\circ)}\left(\sum_{w\in W}\de_w^m\right)\prod_{\al>0}\tfrac{t^2-e^{\al}}{1-e^\al}\,.
$
\end{enumerate}
\end{prop}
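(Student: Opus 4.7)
For part~(i), the plan is to expand $1+t^{-1}\tau_i$ in the basis $\{1,\de_i^m\}$ of $Q_{W,m}$ over $Q_m$: substituting the definitions of $\tau_i$ and $X_i^m$ gives $1+t^{-1}\tau_i=a_i+b_i\de_i^m$ with $a_i=\tfrac{t^{-2}-e^{-\al_i}}{1-e^{-\al_i}}$ and $b_i=\tfrac{1-t^{-2}e^{-\al_i}}{1-e^{-\al_i}}$. A one-line computation (multiplying numerator and denominator of $s_i(a_i)=\tfrac{t^{-2}-e^{\al_i}}{1-e^{\al_i}}$ by $-e^{-\al_i}$) verifies $s_i(a_i)=b_i$ and symmetrically $s_i(b_i)=a_i$, so that
\[
\de_i^m(a_i+b_i\de_i^m)=s_i(a_i)\de_i^m+s_i(b_i)=b_i\de_i^m+a_i=1+t^{-1}\tau_i.
\]

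For part~(ii), I would first show that both sides satisfy the eigenvalue equation $\tau_j x=t^{-1}x$ in $Q_{W,m}$ for every simple reflection $s_j$. Writing the right-hand side as $\Theta:=t^{-N}Y_\Pi^m\prod_{\al>0}(t^2-e^\al)$ with $N=\ell(w_\circ)$, Lemma~\ref{lem:YJtau} applied with $J=\Pi$ gives $\tau_jY_\Pi^m=t^{-1}Y_\Pi^m$, hence $\tau_j\Theta=t^{-1}\Theta$. For $\gamma_{w_\circ}=\sum_v t^{N-\ell(v)}\tau_v$, a direct bookkeeping argument works: pair each $v$ with $s_jv$ and apply $\tau_j\tau_v=\tau_{s_jv}$ when $\ell(s_jv)>\ell(v)$ and $\tau_j\tau_v=\tau_{s_jv}+(t^{-1}-t)\tau_v$ otherwise, so that the contributions across each pair combine to give $t^{-1}\gamma_{w_\circ}$.

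I would next convert the eigenvalue equation into a manageable form via the factorization $\tau_j-t^{-1}=\tfrac{t-t^{-1}e^{-\al_j}}{1-e^{-\al_j}}(\de_j^m-1)$ in $Q_{W,m}$, whose scalar coefficient is a nonzero element of the domain $Q_m$: this forces $\de_j^m x=x$ for every $j$, which for $x=\sum_w c_w\de_w^m$ is equivalent to $c_w=w(c_e)$. Hence $x=\bigl(\sum_w\de_w^m\bigr)h$ with $h=c_e\in Q_m$, and both $\gamma_{w_\circ}$ and $\Theta$ have this form. They will therefore coincide once any single $\de_w^m$-coefficient is matched, and I would use $w=w_\circ$: for $\Theta$ that coefficient is visibly $t^{-N}w_\circ(g)=t^{-N}\prod_{\al>0}\tfrac{t^2-e^{-\al}}{1-e^{-\al}}$, where $g=\prod_{\al>0}\tfrac{t^2-e^\al}{1-e^\al}$.

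The last step, and the only real obstacle, is the corresponding computation for $\gamma_{w_\circ}$. Only $\tau_{w_\circ}$ contributes to the $\de_{w_\circ}^m$-coefficient, since for shorter $v$ the $\de$-part of any term in the expansion of $\tau_v$ has length $<N$. Fixing a reduced word $w_\circ=s_{i_1}\cdots s_{i_N}$ and writing $\tau_i=a_i'+b_i'\de_i^m$ with $b_i'=\tfrac{t-t^{-1}e^{-\al_i}}{1-e^{-\al_i}}$, the unique surviving term in $\tau_{w_\circ}$ commutes to $\bigl(\prod_k s_{i_1}\cdots s_{i_{k-1}}(b_{i_k}')\bigr)\de_{w_\circ}^m$; the classical bijection $k\mapsto s_{i_1}\cdots s_{i_{k-1}}(\al_{i_k})$ between $\{1,\ldots,N\}$ and $\Sigma^+$ identifies this with $\prod_{\be>0}\tfrac{t-t^{-1}e^{-\be}}{1-e^{-\be}}=t^{-N}\prod_{\be>0}\tfrac{t^2-e^{-\be}}{1-e^{-\be}}$, matching the coefficient for $\Theta$ and completing the proof.
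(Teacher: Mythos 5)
Your proof is correct. Part~(i) coincides in substance with the paper's (both verify the swap $s_i(a_i)=b_i$ that the paper packages as $\de_i^mX_i^m=1+X_i^m-\de_i^m$ and $(\de_i^m)^2=1$); you are merely more explicit.

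Part~(ii) reaches the same endpoint by a somewhat different path. The paper writes $\gamma_{w_\circ}=\sum_w\de^m_w a_w$, computes $a_{w_\circ}$ by the reduced-word calculation, and then shows all $a_w$ are equal by proving $\de_i^m\gamma_{w_\circ}=\gamma_{w_\circ}$ \emph{directly} out of part~(i), via a pairing over $\{v\in W\mid \ell(s_iv)>\ell(v)\}$. You instead first establish the eigenvalue identity $\tau_j\gamma_{w_\circ}=t^{-1}\gamma_{w_\circ}$ (by a pairing argument operating at the level of the $\tau$-basis), then factor $\tau_j-t^{-1}=\tfrac{t-t^{-1}e^{-\al_j}}{1-e^{-\al_j}}(\de_j^m-1)$ and use that the scalar is a nonzero element of the domain $Q_m$ to cancel and obtain $\de_j^m\gamma_{w_\circ}=\gamma_{w_\circ}$; you also prove the analogous eigenvalue equation for the right-hand side via Lemma~\ref{lem:YJtau} with $J=\Pi$, so that the identity is reduced to matching one coefficient. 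The reduced-word/positive-root bijection that computes the $\de^m_{w_\circ}$-coefficient is shared by both proofs (your $c_{w_\circ}$ is $w_\circ(a_{w_\circ})$ of the paper). Your route does not invoke part~(i) at all in proving (ii), and it additionally characterizes both sides as lying in the $t^{-1}$-eigenspace of the $\tau_j$'s, which is a conceptually pleasant ``idempotent'' framing; the paper's route is shorter because it leans on part~(i). Both are valid and of comparable difficulty.
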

\begin{proof}(i) It follows from the identities $(\de_i^m)^2=1$, and 
\[\de_i^m X_i^m=\tfrac{1}{x_{-\al_i}}(1-\de_i^m)=\left(\tfrac{1}{x_{\al_i}}-1\right)(\de_i^m-1)=1+X_i^m-\de_i^m\,.\]

(ii)  We can write $\gamma_{w_\circ}=\sum_{w\in W}\de_w^m a_w$ with $a_w\in Q_m$. We have $\gamma_{w_\circ}\in \tau_{w_\circ}+\sum_{v<w_\circ}t\Z[t]\tau_v$, so $a_{w_\circ}$ is the coefficient of $\de_{w_\circ}^m$ in the expansion of $\tau_{w_\circ}$ in the basis $\de_w^m$. Note that $\gamma_i=t^{-1}(\de_i^m+1)\tfrac{t^2-e^{\al_i}}{1-e^{\al_i}}$. By reasoning as in the proof of \cite[Lemma~5.4]{CZZ}, we show that 
\[a_{w_\circ}=t^{-\ell(w_\circ)}\prod_{\al>0}\tfrac{t^2-e^\al}{1-e^\al}.\]
To finish the proof, it suffices to show that $a_w=a_{w_\circ}$ for any $w$. This is equivalent to $\de_i^m\gamma_{w_\circ}=\gamma_{w_\circ}$ for any $s_i$. Denote $\null^iW=\{v\in W\,\mid\,\ell(s_iv)>\ell(v)\}$. We have
\begin{flalign*}
\de_i^m\gamma_{w_\circ}&=\de_i^m\sum_{v\in \null^iW}(t^{\ell(w_\circ)-\ell(v)}\tau_v+t^{\ell(w_\circ)-\ell(v)-1}\tau_i\tau_v)\\
&=\sum_{v\in \null{}^iW}t^{\ell(w_\circ)-\ell(v)}\de_i^m(1+t^{-1}\tau_i)\tau_v\overset{\sharp}=\sum_{v\in {}^iW}t^{\ell(w_\circ)-\ell(v)}(1+t^{-1}\tau_i)\tau_v=\gamma_{w_\circ}\,.
\end{flalign*}
Here identity $\sharp$ follows from part (i). This concludes the proof.
\end{proof}

\subsection{Other Kazhdan-Lusztig bases}\label{rem:gammam}
There are other ways of defining a Kazhdan-Lusztig basis, as follows:
\begin{gather*}
\gamma^{+,-}_v=\sum_{w\le v}(-t)^{\ell(w)-\ell(v)}P^{+,-}_{w,v}\tau^+_w\in H^+\,, \;\;~P^{+,-}_{w,v}\in \Z[t]\,,\\
\gamma^{-,-}_v=\sum_{w\le v}(-t)^{\ell(w)-\ell(v)}P^{-,-}_{w,v}\tau^-_w\in H^-\,, \;\;~P^{-,-}_{w,v}\in \Z[t]\,,\\
\gamma^{-,+}_v=\sum_{w\le v}t^{\ell(v)-\ell(w)}P^{-,+}_{w,v}\tau^-_w\in H^-\,, \;\;~P^{-,+}_{w,v}\in \Z[t^{-1}]\,;
\end{gather*}
where the above conditions are in addition to the invariance under the Kazhdan-Lusztig involution. The first sign of $\gamma$ and $P$ corresponds to the sign of $\tau$, and the second one to the choice of $t^{-1}\Z[t^{-1}]$ or $t\Z[t]$. Under this convention, the Kazhdan-Lusztig basis and polynomials defined above coincide with $\gamma^{+,+}\in H^+$ and $P^{+,+}$. 

Since the isomorphism $\sigma$ defined in Remark~\ref{rem:taum} commutes with the Kazhdan-Lusztig involution,  $\sigma(\gamma^{+,+})$ and $\sigma(\gamma^{+,-})$ are also invariant under the involution, and therefore
\[
\sigma(\gamma_v^{+,+})=\gamma_v^{-,-}\,,\;\;\;\;\sigma(\gamma_v^{+,-})=\gamma_v^{-,+}\,,\quad P^{-,-}_{w,v}=\sigma(P_{w,v}^{+,+}), \quad P_{w,v}^{-,+}=\sigma(P_{w,v}^{+,-}).
\]
Applying $\sigma$ to the identity in Proposition~\ref{prop:gammaJ}.(ii), we have
\[
\gamma^{-,-}_{w_\circ}=\sum_{w\in W}(-t)^{\ell(w)-\ell(w_{\circ})}\tau^-_w=(-t)^{\ell(w_\circ)}\left(\sum_{w\in W}\de_w^m\right)\prod_{\al>0}\tfrac{t^{-2}-e^\al}{1-e^\al}\,.
\] 
For $\gamma^{+,-}$ and $\gamma^{-,+}$, we have a similar identity to the first one, but not to the second one.

\section{The Kazhdan-Lusztig theory and hyperbolic formal group laws}

In this section, we give a functorial treatment of the parabolic Kazhdan-Lusztig basis in the new setup of the oriented cohomology of flag varieties. The main results below are new, and they are  direct consequences of this setup.

\subsection{From the multiplicative to the generic hyperbolic formal group law}\label{sec:FmtoFh}

We first relate the realizations of the Iwahori-Hecke algebra $H$ inside the Demazure algebras associated with the multiplicative formal group law $F_m$ (see \S\ref{sec:Deodmult}) and the generic hyperbolic formal group law $F_t$; this is based on two morphisms from $F_t$ to $F_m$. Then we identify and study Deodhar's modules inside the dual of the Demazure algebra for $F_t$, which is isomorphic to the corresponding cohomology  of $G/B$.

We first introduce some notation for the remainder of the paper. Let $\mu:=t+t^{-1}$,  $\Rr:=\Z[t,t^{-1}, \tfrac{1}{\mu}]$, and consider $F_m$ as a FGL over $\Rr$; also recall the notation $u:=\mu^{-2}$. Let $S_m$ and $S_t$ be the formal group algebra for $F_m$ and $F_t$, respectively. For example, $S_m=\Rr[[\La]]$ if we identify $x_\la\in S_m$ with $1-e^{-\la}\in \Rr[[\La]]$ for any $\la\in \La$. 

We use $X_i^m, Y_i^m$ and $X_i^t, Y_i^t$ to denote the corresponding divided difference and push-pull elements for $F_m$ and $F_t$, respectively.   Let $\Dh\subset \DFh$ be the subalgebra generated by $X_i^t$ for all $i$ (so $\Dh$ is the formal Demazure algebra associated to the hyperbolic formal group law). It follows from  \cite[Example~4.12]{Le15} that $\Dh$ is actually an $\Rr$-module with basis $\{X^t_{I_w}\mid w\in W\}$, a fact that  does not hold for a general FGL. 

Denote \[
\pt^t_J=Y_J^t\bullet\pt^t=Y_J^t\odot \pt^t\in \DFhdP{J}\subset \DFhd.
\]
The automorphism $\sigma$ of $\Rr$ with $\sigma(t)=-t^{-1}$, defined in Remark~\ref{rem:taum}, extends to an automorphism of $S_t$ with $\sigma(x_\la)=x_\la$, hence induces an automorphism of $\Dh$ and also of $\DFh$, satisfying that $\sigma(Y_i^t)=Y^t_i$. We also have $\sigma(Y_J)=Y_J$. Moreover, it induces an automorphism of $\DFhd\cong \DFh\bullet \pt^t$ by $\sigma(z\bullet \pt^t)=\sigma(z)\bullet \pt^t$. From the definition of $\odot$-action, and the fact that $\iota$ commutes with the involution $\sigma$,  we also have 
\[\sigma(z\odot\pt_J^t)=\sigma(Y_J\bullet\iota(z)\bullet \pt^t)=\sigma(Y_J)\iota(\sigma(z))\bullet \pt^t=\sigma(z)\odot \pt^t_J, \quad z\in \DFh. 
\] 
 
\begin{lem} There is a morphism of FGLs $g\colon F_t\to F_m$ over $\Rr$, defined by 
\[g(x)=\tfrac{(1-t^2)x}{x-(t^2+1)},\] so that $F_m(g(x),g(y))=g(F_t(x,y))$.
\end{lem}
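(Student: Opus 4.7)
The plan is to verify the morphism identity directly by exploiting the parametrization $\al = t/(t+t^{-1})$, $\be = t^{-1}/(t+t^{-1})$ from \eqref{thypfgl}, which satisfies $\al + \be = \mu_1 = 1$ and $\al\be = -\mu_2 = u$. First, note that $g \in \Rr[[x]]$ with $g(0) = 0$, since the constant $t^2+1 = t(t+t^{-1})$ is a unit in $\Rr$, so the denominator $x-(t^2+1)$ of $g$ can be inverted as a power series.

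Next, I would rewrite $g$ in a form adapted to $\al,\be$. Since $\al-\be = (t-t^{-1})/(t+t^{-1})$ and $\be = t^{-1}/(t+t^{-1})$, multiplying numerator and denominator of $(\al-\be)x/(1-\be x)$ by $t(t+t^{-1})$ yields
\[
g(x) \;=\; \frac{(1-t^2)x}{x-(t^2+1)} \;=\; \frac{(\al-\be)x}{1-\be x} \;=\; 1 \,-\, \frac{1-\al x}{1-\be x}\,.
\]
The multiplicative FGL $F_m$ admits the multiplicative character $x \mapsto 1-x$, since $1 - F_m(x,y) = (1-x)(1-y)$; so proving $F_m(g(x),g(y)) = g(F_t(x,y))$ is equivalent to proving $h(F_t(x,y)) = h(x)\,h(y)$, where $h(x) := 1 - g(x) = (1-\al x)/(1-\be x)$.

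The main step is then the pair of identities
\[
(1+\mu_2 xy)\bigl(1-\al F_t(x,y)\bigr) \;=\; (1-\al x)(1-\al y)\,,
\]
and the same with $\al$ replaced by $\be$. Each follows by clearing the denominator via $(1+\mu_2 xy)F_t(x,y) = x+y-xy$ and expanding using only $\al+\be = 1$ and $\al\be = -\mu_2$ (for example, $1 + \mu_2 xy - \al(x+y-xy) = 1 - \al x - \al y + \al(1-\be)xy = (1-\al x)(1-\al y)$). Taking the quotient of the two identities gives $h(F_t(x,y)) = h(x)\,h(y)$, completing the verification. There is no serious obstacle here; the argument simply repackages the classical observation that the generic hyperbolic FGL is the quotient of two multiplicative characters with parameters $\al$ and $\be$, and that $F_m$ is recovered by trivializing one of them.
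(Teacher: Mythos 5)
Your proposal is correct, and it is worth noting that the paper itself states this lemma without proof, so there is nothing to compare it against directly. You have supplied a genuine argument where the paper leaves a gap. Your verification is sound on every point. The constant $t^2+1 = t\mu$ is a unit in $\Rr = \Z[t,t^{-1},\mu^{-1}]$, so $g(x) = \tfrac{(1-t^2)x}{x-(t^2+1)}$ is a well-defined power series with zero constant term. The key observation — rewriting $1-g(x) = (1-\al x)/(1-\be x)$ with $\al = t^2/(t^2+1)$, $\be = 1/(t^2+1)$ satisfying $\al+\be=1=\mu_1$ and $\al\be = u = -\mu_2$, and then passing through the multiplicative substitution $1-F_m(x,y) = (1-x)(1-y)$ — reduces the morphism identity to the two factorizations
\[
(1+\mu_2 xy)\bigl(1-\al F_t(x,y)\bigr) = (1-\al x)(1-\al y), \qquad
(1+\mu_2 xy)\bigl(1-\be F_t(x,y)\bigr) = (1-\be x)(1-\be y),
\]
each of which follows by expanding $1+\mu_2 xy -\al(x+y-xy)$ and using $1-\be=\al$, $\al+\mu_2 = \al-\al\be = \al^2$ (and symmetrically for $\be$). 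Taking the quotient eliminates the common factor $1+\mu_2 xy$ and gives exactly $h(F_t(x,y)) = h(x)h(y)$ with $h=1-g$. This is not a weaker substitute for a direct brute-force check: it makes transparent the fact (used implicitly in the paper, cf.\ Remark~\ref{rem:psim}, where $\psi^-$ arising from $\be$ in place of $\al$ is introduced) that the hyperbolic group law is the ratio of two multiplicative characters and that $F_m$ appears after trivializing one of them.
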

It follows from \cite[Lemma~2.6]{CPZ} that $g$ induces a $W$-equivariant embedding of rings  
\[
\psi=\psi^+\colon S_m\hookrightarrow S_t\text{ by }f(x_\la)\mapsto f(g(x_\la))\text{ for any }\la\in \La, f(x)\in \Rr[[x]].
\] 
Note that it is not an isomorphism unless one inverts $t^2-1$ in $\Rr$. The map $\psi$ induces a morphism of algebras $\Qq_{F_m}\to \Qq_{F_t}':=\Rr[\tfrac{1}{1-t^2}][[\La]]_{F_t}$, and, hence, a morphism of twisted group algebras \[\psi\colon \Qq_{F_m}\rtimes \Rr[W]\to \Qq'_{F_t}\rtimes \Rr[W],\;\text{ where }p\in \Qq_{F_m}\mapsto\psi(p)\in \Qq'_{F_t}\text{ and }\de_i^m\mapsto \de^t_i.\] 
By definition we have 
\begin{equation}\label{psitau}\psi(\tau_i)=\mu Y^t_i-t=\mu X_i^t+t^{-1}\,.\end{equation}
Since the Hecke algebra $H$ is generated by $\tau_i$ and $\Dh$ is generated by $Y_i^t$, we obtain an isomorphism 
\[\psi: H\subset \Qq_{W,F_m} \overset{\sim}\longrightarrow \Dh\subset \Qq_{W,F_t}'.\]
Indeed, it also induces an embedding of the affine Hecke algebra $\mathbf{H}$ into  the affine hyperbolic Demazure algebra $\DFh$.

As in \eqref{prop2}, we have  $Y_i^tY^t_J=Y^t_J$ for any $i\in J$, so we have 
\begin{equation}\label{eq:3}\psi(\tau_i)Y^t_J=t^{-1}Y^t_J\,,\end{equation} which is the analogue of Lemma~\ref{lem:YJtau}.

\begin{rem}\label{rem:psim}
There is another morphism $F_t\to F_m$ defined by $g^-(x):=\tfrac{(1-t^{-2})x}{-(t^{-2}+1)+x}$. The map $\psi^-\colon\Qq_{W,F_m}\to \Qq_{W,F_t}'$ determined by $g^-$ satisfies that $\psi^-=\sigma\circ\psi^+\circ \sigma$, so it induces an isomorphism $H^-\stackrel{\sim}\to \Dh$. Indeed, it is precisely the other isomorphism mentioned in \cite[Proposition~9.2]{CZZ1}, i.e., it satisfies $\tau_i^-\mapsto -\mu Y^t_i+t^{-1}$.
In summary, we have the following commutative diagram ($\psi^+$ is the $\psi$ defined before).
\[
\xymatrix{\Dh \ar@/^2pc/[rrr]^\sigma_\sim\ar@{^(->}[r] &\DFh \ar[r] ^\sigma _\sim& \DFh & \Dh\ar@{_(->}[l]\\
   H^+\ar@/_2pc/[rrr]^\sigma_\sim \ar@{^(->}[r] \ar[u]^{\psi^+}_\sim & \DFm \ar@{^(->}[u]^{\psi^+}\ar[r]^\sigma_\sim & \DFm \ar@{^(->}[u]^{\psi^-} & H^-\ar@{_(->}[l] \ar[u]^{\psi^-}_\sim 
}
\]
\end{rem}

We define the $H$-module $\calN^J:=\psi(H)\odot \pt^t_J\subset \DFhd$ and define $\frakn^J_v:=\psi(\tau_v)\odot \pt^t_J$.  In particular, $\frakn^J_e=\pt^t_J$. 
Because of \eqref{eq:3}, all results of \S\ref{sec:Deodmult} hold if one replaces $\calM^J$ by $\calN^J$,  $\pt^m_J$ by $\pt^t_J$  and $\frakm^J_v$ by $\frakn_v^J$,  respectively.  Indeed, $\calM^J\cong \calN^J$ as $H$-modules by identifying $\frakm^J_v$ with $\frakn^J_v$. So $\{\frakn_v^J\}_{v\in W^J}$  is a basis of $\calN^J$. Furthermore, for $J'\subset J$, we have the following commutative diagram.
\[
\xymatrix{H\ar@{->>}[rrr]^{z\mapsto \psi(z)\odot\_}\ar@{->>}[drrr]_{z\mapsto \psi(z)\odot\_} &&&\calN^{J'}\ar@{->>}[d]^{A^t_{J/J'}}\ar@{^(->}[r] & \DFhdP{J'}\ar@{->>}[d]^{A^t_{J/J'}}\\
 &&& \calN^{J}\ar@{^(->}[r] & \DFhdP{J}}
\]
\begin{rem}
By using the isomorphisms $\psi: H\cong \Dh$, it is not difficult to see that for the generic hyperbolic FGL, the $\ker \partial_0$ of the complex~\eqref{eq:mainseq} is a free $\Ss$-module of rank $1$, generated by $\sum_{v\in W} t^{ -\ell(v)}\frakn_v$.
\end{rem}

\begin{thm}\label{thm:YPi} We have
\[Y^t_\Pi=\mu^{-\ell(w_\circ)}\psi(\gamma_{w_\circ}).\]
\end{thm}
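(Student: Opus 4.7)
The plan is to compute $\psi(\gamma_{w_\circ})$ directly using the explicit formula from Proposition~\ref{prop:gammaJ}(ii) and compare with the definition of $Y^t_\Pi = \left(\sum_{w\in W}\delta_w\right) \frac{1}{x^t_\Pi}$, where $x^t_\Pi = \prod_{\alpha \in \Sigma^-} x^t_\alpha$.

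First, since $\psi(\delta_w^m) = \delta_w^t$ and $\psi$ is a ring map on the $Q$-coefficients, Proposition~\ref{prop:gammaJ}(ii) gives
\[
\psi(\gamma_{w_\circ}) \;=\; t^{-\ell(w_\circ)}\left(\sum_{w\in W}\delta_w^t\right)\prod_{\alpha>0} \psi\!\left(\tfrac{t^2 - e^\alpha}{1 - e^\alpha}\right).
\]
So the problem reduces to showing the scalar identity
\[
\psi\!\left(\tfrac{t^2 - e^\alpha}{1 - e^\alpha}\right) \;=\; \tfrac{\mu t}{x^t_{-\alpha}} \qquad \text{for every positive root } \alpha,
\]
because then $\prod_{\alpha>0}\psi(\cdots) = (\mu t)^{\ell(w_\circ)}/x^t_\Pi$ and, combining with the $t^{-\ell(w_\circ)}$ prefactor, we get $\psi(\gamma_{w_\circ}) = \mu^{\ell(w_\circ)} Y^t_\Pi$, as required.

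The core computation goes as follows. In $S_m$ we have $e^\alpha = 1/(1 - x^m_\alpha)$, so a short algebraic manipulation yields
\[
\tfrac{t^2 - e^\alpha}{1 - e^\alpha} \;=\; \tfrac{1 - t^2 + t^2 x^m_\alpha}{x^m_\alpha}.
\]
Applying $\psi$ means substituting $x^m_\alpha \mapsto g(x^t_\alpha) = \frac{(1-t^2)x^t_\alpha}{x^t_\alpha - (t^2+1)}$. Carrying out this substitution and clearing denominators, one finds that the numerator factors as $(1-t^4)(x^t_\alpha - 1)$ and the outer denominator becomes $(1-t^2) x^t_\alpha$, so
\[
\psi\!\left(\tfrac{t^2 - e^\alpha}{1 - e^\alpha}\right) \;=\; \tfrac{(1+t^2)(x^t_\alpha - 1)}{x^t_\alpha}.
\]
On the other hand, since $F_t$ has $\mu_1 = 1$, the relation $F_t(x^t_\alpha, x^t_{-\alpha}) = 0$ forces $x^t_\alpha + x^t_{-\alpha} - x^t_\alpha x^t_{-\alpha} = 0$, which rearranges to $x^t_{-\alpha} = x^t_\alpha/(x^t_\alpha - 1)$. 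Using $\mu t = t^2 + 1$, we immediately obtain
\[
\tfrac{\mu t}{x^t_{-\alpha}} \;=\; \tfrac{(t^2+1)(x^t_\alpha - 1)}{x^t_\alpha},
\]
matching the expression above. This completes the per-root identity and hence the theorem.

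The main (though very minor) obstacle is the bookkeeping in the substitution step: one must correctly invert $e^\alpha = 1/(1-x^m_\alpha)$ and then use the defining equation $F_t(x^t_\alpha, x^t_{-\alpha}) = 0$ to convert the answer into an expression purely in $x^t_{-\alpha}$. Everything else is formal. An alternative, slightly more conceptual route would be to characterize $Y^t_\Pi$ as the unique element of $\QW$ (up to scalar) of the form $(\sum_w \delta_w) q$ with $q \in \Qq$ satisfying $\psi(\tau_i) \cdot Y^t_\Pi = t^{-1} Y^t_\Pi$ for all $i$ (which follows from \eqref{eq:3}), and to verify the analogous eigenproperty for $\psi(\gamma_{w_\circ})$ using $\tau_i \gamma_{w_\circ} = t^{-1}\gamma_{w_\circ}$ in $H$; then one pins down the scalar by comparing the coefficient of $\delta_{w_\circ}$ on both sides, as in the proof of Proposition~\ref{prop:gammaJ}(ii). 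However, the direct computation above is the shortest route.
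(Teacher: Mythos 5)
Your argument is correct and follows essentially the same route as the paper: start from the explicit formula for $\gamma_{w_\circ}$ in Proposition~\ref{prop:gammaJ}(ii), apply $\psi$ termwise using the substitution $x^m_\alpha\mapsto g(x^t_\alpha)$, and reduce to a per-root scalar identity. The only divergence is cosmetic: the paper works directly in $x^m_{-\alpha}$ via the identification $e^\alpha=1-x^m_{-\alpha}$, which lands immediately on $\tfrac{\mu}{x^t_{-\alpha}}$, whereas you use $e^\alpha=1/(1-x^m_\alpha)$, obtain an expression in $x^t_\alpha$, and then convert to $x^t_{-\alpha}$ using the relation $x^t_\alpha+x^t_{-\alpha}-x^t_\alpha x^t_{-\alpha}=0$ (which follows from $F_t(x^t_\alpha,x^t_{-\alpha})=0$, the denominator $1-ux^t_\alpha x^t_{-\alpha}$ being a unit). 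Both are valid; the paper's choice of variable simply avoids that last conversion step.
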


\begin{proof}
Using  the identification $e^{\al}=1-x_{-\al}$ in $S_m$, straightforward computations show that \[\psi\left(t^{-1}\tfrac{t^2-e^\al}{1-e^\al}\right)=\psi\left(\tfrac{t-t^{-1}}{x_{-\al}}+t^{-1}\right)=\tfrac{\mu}{x_{-\al}}\in S_t\,.\]
Therefore, using Proposition~\ref{prop:gammaJ}, we obtain
\[\psi(\gamma_{w_\circ})=\sum_{w\in W}\de_w^t\prod_{\al>0}\psi\left(t^{-1}\tfrac{t^2-e^\al}{1-e^\al}\right)=\sum_{w\in W}\de_w^t\prod_{\al>0}\tfrac{\mu}{x_{-\al}}=\mu^{\ell(w_\circ)}Y^t_\Pi. \qedhere\]
\end{proof}

\begin{cor}\label{cor:1} Let $J'\subset J$. Then we have
\begin{enumerate} 
\item[{\rm (1)}] $\mu^{-\ell(w_J)}\psi(\gamma_{J})=Y^t_J$. In particular, $Y_J^t\in \Dh$.
\item[{\rm (2)}] $\mu^{-\ell(w_{J/J'})}\psi(\gamma_{J/J'})Y^t_{J'}=Y^t_J$.
\item[{\rm (3)}] $\calN^{J}=\psi(H\gamma_J)\odot\pt^t\subset \DFhd$. 
\item[{\rm (4)}] The embedding $i_{J/J'}\colon \DFhdP{J}\to \DFhdP{J'}$ restricts to a map $i_{J/J'}\colon \calN^{J}\subset \calN^{J'}$.
\item[{\rm (5)}] $\psi(z)\odot \pt_J^t=\mu^{-\ell(w_{J/J'})}\psi(z\gamma_{J/J'})\odot\pt^t_{J'}$. In particular, $\psi(z)\odot\pt_J^t=\mu^{-\ell(w_J)}\psi(z\gamma_J)\odot\pt^t$ and $\frakn_v^J=\mu^{-\ell(w_J)}\psi(\tau_v\gamma_J)\odot \pt^t$.
\end{enumerate}
\end{cor}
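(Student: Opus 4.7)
The plan is to deduce the five statements in order, with part~(1) being the main input (a sub-root-datum version of Theorem~\ref{thm:YPi}) and the remaining parts following by formal manipulation using the factorization $\gamma_J = \gamma_{J/J'}\gamma_{J'}$ and the fact that $\odot$ is a left action.

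For (1), I would apply the argument of Theorem~\ref{thm:YPi} to the sub-root datum indexed by $J$. Namely, in $\Dh$ the push-pull element $Y^t_J$ has the explicit form $\left(\sum_{w\in W_J}\de^t_w\right)\prod_{\al\in\Sigma^+_J}\tfrac{1}{x_{-\al}}$, while $\gamma_J$ equals the Kazhdan--Lusztig basis element for the longest element of the parabolic subgroup $W_J$. Arguing exactly as in the proof of Theorem~\ref{thm:YPi} (but summing over $W_J$ and taking the product over $\Sigma^+_J$), together with the identity $\psi\bigl(t^{-1}\tfrac{t^{2}-e^{\al}}{1-e^{\al}}\bigr)=\tfrac{\mu}{x_{-\al}}$ already established there, yields $\mu^{-\ell(w_J)}\psi(\gamma_J)=Y^t_J$. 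In particular $Y^t_J\in\Dh$, since $\psi(H)\subset\Dh$.

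Part (2) is an immediate corollary: applying $\psi$ to the factorization $\gamma_J=\gamma_{J/J'}\gamma_{J'}$ and using (1) for both $J$ and $J'$,
\[
Y^t_J=\mu^{-\ell(w_J)}\psi(\gamma_{J/J'})\psi(\gamma_{J'})=\mu^{-\ell(w_J)+\ell(w_{J'})}\psi(\gamma_{J/J'})Y^t_{J'}=\mu^{-\ell(w_{J/J'})}\psi(\gamma_{J/J'})Y^t_{J'},
\]
by $\ell(w_{J/J'})=\ell(w_J)-\ell(w_{J'})$. For (3), I use that $\odot$ is a left action (Lemma~\ref{actcommute}(iii)) and that $\pt^t_J=Y^t_J\odot \pt^t$ by~\eqref{bullodot}. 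Then
\[
\calN^{J}=\psi(H)\odot\pt^t_J=\psi(H)\odot(Y^t_J\odot\pt^t)=(\psi(H)Y^t_J)\odot\pt^t=\mu^{-\ell(w_J)}\psi(H\gamma_J)\odot\pt^t,
\]
and the scalar $\mu^{-\ell(w_J)}\in\Rr^\times$ can be absorbed since $H\gamma_J$ is $\Rr$-stable. Part (4) is then immediate from (3): since $\gamma_J=\gamma_{J/J'}\gamma_{J'}$, we have $H\gamma_J\subset H\gamma_{J'}$, hence $\calN^J\subset\calN^{J'}$, and this inclusion inside $\DFhd$ is compatible with the identification $\DFhdP{J}=(\DFhd)^{W_J}\subset(\DFhd)^{W_{J'}}=\DFhdP{J'}$, which is how $i_{J/J'}$ is realized.

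Finally, for (5), the same manipulation as in (3) together with (2) gives
\[
\psi(z)\odot\pt^t_J=(\psi(z)Y^t_J)\odot\pt^t=\mu^{-\ell(w_{J/J'})}(\psi(z\gamma_{J/J'})Y^t_{J'})\odot\pt^t=\mu^{-\ell(w_{J/J'})}\psi(z\gamma_{J/J'})\odot\pt^t_{J'},
\]
and specializing to $J'=\emptyset$ (where $\pt^t_\emptyset=\pt^t$, $w_{J/\emptyset}=w_J$, $\gamma_{J/\emptyset}=\gamma_J$) and then to $z=\tau_v$ recovers the two stated special cases, including $\frakn^J_v=\mu^{-\ell(w_J)}\psi(\tau_v\gamma_J)\odot\pt^t$. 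The main obstacle is really just (1): one must verify that the proof of Theorem~\ref{thm:YPi} is intrinsic to the ambient root data and so transports verbatim to the sub-system $(W_J,\Sigma_J)$; once this is observed, parts (2)--(5) are formal consequences of the factorization of $\gamma_J$, the left-action property of $\odot$, and the invertibility of $\mu$.
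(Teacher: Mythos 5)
Your proposal is correct and follows essentially the same route as the paper: part~(1) by transporting the proof of Theorem~\ref{thm:YPi} to the sub-root datum for $W_J$, and parts~(2)--(5) by formal manipulation with $\gamma_J=\gamma_{J/J'}\gamma_{J'}$, the left-action property of $\odot$, $\pt^t_J=Y^t_J\odot\pt^t$, and invertibility of $\mu$. The only cosmetic difference is that you derive~(4) from~(3) via $H\gamma_J\subset H\gamma_{J'}$, while the paper cites~(2), but these are interchangeable since~(3) itself is the same computation.
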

\begin{proof}(1) Follows similar as in Theorem~\ref{thm:YPi} replacing $w_\circ$ by $w_J$.  The second property follows since $\psi(H)\subset\Dh$.

(2) Follows from Theorem~\ref{thm:YPi} and the identities $Y^t	_{J/J'}Y^t_{J'}=Y^t_J$ and $\gamma_{J/J'}\gamma_{J'}=\gamma_J$.

(3) Follows from the definition of $\calN^J$.

(4) Follows from part (2) and the definition of $\calN^J$.

(5) We have
\begin{flalign*}
\psi(z) \odot \pt_J^t&\overset{\sharp_1}=\psi(z)\odot \mu^{-\ell(w_J)}\psi(\gamma_J)\odot \pt^t\\
&\overset{\sharp_2}= \mu^{-\ell(w_{J})}\psi(z\gamma_{J/J'})\odot\psi(\gamma_{J'})\odot\pt^t\\
&\overset{\sharp_3}=\mu^{-\ell(w_{J/J'})}\psi(z\gamma_{J/J'})\odot \mu^{-\ell(w_{J'})}\psi(\gamma_{J'})\odot\pt^t\\
&\overset{\sharp_4}=\mu^{-\ell(w_{J/J'})}\psi(z\gamma_{J/J'})\odot\pt^t_{J'},
\end{flalign*} 
where $\sharp_1$ and $\sharp_4$ follow from part (2), $\sharp_2$ follows from the identity $\gamma_{J}=\gamma_{J/J'}\gamma_{J'}$, and $\sharp_3$ follows from the identity $\ell(w_J)=\ell(w_{J/J'})+\ell(w_{J'})$.
\end{proof}
\begin{rem}
  Corollary~\ref{cor:1}~(1)  shows that  $\mu^{-\ell(w_{J/J'})}\psi(\gamma_{J/J'})\in \DFh$ behaves similarly to $Y_{J/J'}\in \Qq_{F_t,W}$, while $Y_{J/J'}$ may not even belong to $\DFh$ when $J\supsetneq J'\supsetneq \emptyset$. This is an important advantage. For example, it can be used to study the functoriality of the module $\calN^{J}$ as below.
\end{rem}

From Corollary~\ref{cor:1}~(4) we have the following commutative diagram.
\[
\xymatrix{\calN^J \ar[d]_{i_{J/J'}} \ar@{^(->}[r]& \DFhdP{J}\ar@{^(->}[r] \ar@{^(->}[d]_{i_{J/J'}} &\DFhd\ar@{=}[d]\\
\calN^{J'}\ar@{^(->}[r] & \DFhdP{J'}\ar@{^(->}[r] & \DFhd}
\]

\begin{rem}Note that $\calM^J$ does not satisfy the property in the previous diagram, that is, in general the embedding ${i_{J/J'}}:\DFmdP{J}\to \DFmdP{J'} $ does not map $\calM^J$  into $\calM^{J'}$.
\end{rem}

\subsection{Parabolic KL-Schubert classes}\label{subsec:parKL} We now recall Deodhar's definition of the {\em parabolic Kazhdan-Lusztig (KL) basis} and define  the parabolic KL-Schubert classes for the generic hyperbolic FGL in a canonical way, independent of choices of reduced words, unlike the parabolic Bott-Samelson classes.

 We define the Kazhdan-Lusztig involution on $\calN^{J}$
\[
\overline{\frakn_e^{J}}=\frakn_e^{J}, \quad \overline{\psi(z)\odot\frakn_e^{J}}=\psi(\overline{z})\odot\frakn_e^{J}, \quad z\in H.
\]
Then $\overline{\psi(z)\odot f}=\psi(\overline{z})\odot\overline{f}, z\in H, f\in \calN^{J}$. In particular, if $z$ is invariant, then so is $\psi(z)\odot \frakn_e^J$. Since $\gamma_J$ is invariant, so $i_{J/J'}$ commutes with the involution.

\begin{dfn}\label{def:CJ}For $v\in W^J$, let $C^{J}_v\in \frakn^{J}_v+\sum_{w<v} t\Z[t]\frakn^{J}_w\in \calN^J$ which is invariant under the involution.   By \cite[Theorem~3.1]{So97} $C^{J}_v$ are uniquely determined by these conditions. Denote $C_v=C^{\emptyset}_v$.
\end{dfn}
Write
\begin{equation}\label{defpkls}
C^{J}_v=\sum_{w\le v, w\in W^J}t^{\ell(v)-\ell(w)}P^{J}_{w,v}\frakn^{J}_w, \quad P^{J}_{w,v}\in \Z[t^{-1}]\,,
\end{equation}
where $P^{J}_{w,v}$ are the {\em parabolic Kazhdan-Lusztig polynomials}.
By the uniqueness of the Kazhdan-Lusztig basis,  we see that 
\begin{equation}\label{eq:CKL}C_v=\psi(\gamma_v)\odot\pt^t\in \calN.\end{equation}
For $J\neq \emptyset$, this is not true. However, see Corollary~\ref{cor:C2} below.

\begin{dfn}\label{defpklsc} For a  $v\in W^J$, we view $\mu^{-\ell(v)}C^{J}_v$ under the canonical embedding of $\calN^J$ into $\DFhdP{J}\simeq \hh_T(G/P_J)$ and call it
the {\em parabolic KL-Schubert class}. 
\end{dfn}

As a short summary, we have
\[
\frakn_v^J=\mu^{-\ell(w_J)}\psi(\tau_v\gamma_J)\odot\pt^t, \quad C_v^J=\mu^{-\ell(w_J)}\psi(\gamma_{vw_J})\odot\pt^t.
\]

\begin{rem} (i) By definition, the parabolic KL-Schubert classes form an $\Rr$-basis of $\calN^J$, and a $S_t$-basis of $\DFhdP{J}$. 

 (ii) In the $G/B$ case, in \cite{LZ1} we used $\psi(\gamma_{v^{-1}})\bullet{\pt^t}$ instead of \eqref{eq:CKL} to define the KL-Schubert classes. The two definitions turn out to be equivalent, as seen below:
\begin{flalign*}
C_v&=\psi(\gamma_v)\odot{\pt^t}=\left(\sum_{w\le v} t^{\ell(v)-\ell(w)}P_{w,v}\,\psi(\tau_w)\right)\odot{\pt^t}\\
&\overset{\sharp_1}=\left(\sum_{w\le v} t^{\ell(v)-\ell(w)}P_{w,v}\,\psi(\tau_{w^{-1}})\right)\bullet{\pt^t}\\
&\overset{\sharp_2}=\left(\sum_{w\le v} t^{\ell(v)-\ell(w)}P_{w^{-1},v^{-1}}\,\psi(\tau_{w^{-1}})\right)\bullet{\pt^t}=\psi(\gamma_{v^{-1}})\bullet{\pt^t}\,.
\end{flalign*}
Here $\sharp_1$ follows from \eqref{psitau} and the identity $X_i^t\bullet \pt^t=X_i^t\odot \pt^t$,  
while $\sharp_2$ is based on a classical fact which can be found, for instance, in \cite[Chapter~5,~Exercise~12]{BB}. 
\end{rem}

\begin{ex}\label{expkls} We calculate the parabolic KL-Schubert classes corresponding to the example in \S\ref{secex}.

All the relevant Kazhdan-Lusztig polynomials are equal to $1$, except for 
\[P_{{e},s_1s_3s_2}^J=P_{s_1s_3,s_1s_3s_2s_1s_3}=1+q=1+t^{-2}\,;\]
see Proposition~\ref{prop:KLpara} below. By plugging \eqref{psitau} into the definition \eqref{defpkls} of the parabolic KL-Schubert classes and cancelling terms, we obtain:
\begin{align*}&C_{e}^J={\rm pt}_J^t=\zeta_{e}^J\,,\;\;\;\mu^{-1}C_{s_2}^J=Y^t_2\odot{\rm pt}_J^t=\zeta_{s_2}^J\,,\\
&\mu^{-2}C_{s_is_2}^J=\left(Y_i^tY^t_2-\mu^{-1}tY^t_i+\mu^{-2}t^2\right)\odot{\rm pt}_J^t=\zeta_{s_is_2}^J-u\zeta_{e}^J\,,\\
&\mu^{-3}C_{s_1s_3s_2}^J=\left(Y^t_1Y^t_3Y^t_2-\mu^{-1}tY^t_1Y^t_3+\mu^{-3}(t^3+t)\right)\odot{\rm pt}_J^t=\zeta_{s_1s_3s_2}^J-u\zeta_{e}^J\,,\end{align*}
where in the third formula $i\in\{1,3\}$. 

Thus, the parabolic Schubert classes $\mu^{-2}C_{s_3s_2}^J$, $\mu^{-2}C_{s_1s_2}^J$, and $\mu^{-3}C_{s_1s_3s_2}^J$ are as follows, respectively:
\[\scriptstyle
{\xymatrix{
&{[13][14]}\ar[d]\\
&{[12][14]}\ar[dl]\ar[dr]\\
{[12][13]}\ar[dr]&&{0}\ar[dl]\\
&{0}\ar[d]\\
&{0}
} }\qquad\qquad
{\xymatrix{
&{[14][24]}\ar[d]\\
&{[14][34]}\ar[dl]\ar[dr]\\
{0}\ar[dr]&&{[24][34]}\ar[dl]\\
&{0}\ar[d]\\
&{0}
} }
\]
\[\scriptstyle
{\xymatrix{
&{[14]+[23]-[14][23]+u[14][23]([13]+[24])}\ar[d]\\
&{[14]}\ar[dl]\ar[dr]\\
{[13]}\ar[dr]&&{[24]}\ar[dl]\\
&{[23]}\ar[d]\\
&{0}
} }
\]
Furthermore, by Corollary~\ref{cor:C2}~(2) below, we have $\mu^{-4}C_{s_2s_1s_3s_2}^J=\sum_{w\in S_4} f_w=\unit$. 
Thus, we verified Conjecture~\ref{smoothconj} below for this Grassmannian. Furthermore, note that even for the singular Schubert variety (indexed by $s_1s_3s_2$), the parabolic KL-Schubert class is given by a simpler formula than the corresponding parabolic Bott-Samelson class computed in \S\ref{secex}.
\end{ex}

\subsection{Functorial properties}\label{subsec:functor}
Let $J\supset J'$. Since $i_{J/J'}$ commutes with the Kazhdan-Lusztig involution, by the uniqueness of the parabolic Kazhdan-Lusztig basis, we recover the following classical result.

\begin{thm}\label{thm:C}For any $J\supset J'$ and $v\in W^J$, via the embedding $i_{J/J'}:\calN^J\to \calN^{J'}$, we have
\[
C^{J}_v=\mu^{-\ell(w_{J/J'})}C^{J'}_{vw_{J/J'}}\,. 
\]
\end{thm}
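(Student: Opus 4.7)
My plan is to apply the uniqueness in Definition~\ref{def:CJ}: the class $C^{J'}_{vw_{J/J'}}$ is the unique element of $\calN^{J'}$ that is invariant under the Kazhdan-Lusztig involution and lies in $\frakn^{J'}_{vw_{J/J'}}+\sum_{u<vw_{J/J'},\,u\in W^{J'}}t\Z[t]\,\frakn^{J'}_u$. So I will verify that $\mu^{\ell(w_{J/J'})}\,i_{J/J'}(C_v^J)$ satisfies both of these properties. The invariance is essentially given: $C_v^J$ is KL-invariant in $\calN^J$ by construction, $i_{J/J'}$ was already observed to commute with the involution, and $\mu=t+t^{-1}$ is fixed by $\overline{\cdot}$.

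For the triangular form, I first compute $i_{J/J'}(\frakn_w^J)$ in the basis $\{\frakn^{J'}_u\}$ for any $w\in W^J$. Applying Corollary~\ref{cor:1}~(5) with $z=\tau_w$, expanding $\gamma_{J/J'}=\sum_{y\in W_J\cap W^{J'}}t^{\ell(w_{J/J'})-\ell(y)}\tau_y$, and using the standard parabolic factorization $W^{J'}=W^J\cdot(W_J\cap W^{J'})$ (in which lengths add, so $\tau_w\tau_y=\tau_{wy}$ and $wy\in W^{J'}$ for $w\in W^J$ and $y\in W_J\cap W^{J'}$), I obtain
\[
\mu^{\ell(w_{J/J'})}\,i_{J/J'}(\frakn_w^J)=\sum_{y\in W_J\cap W^{J'}} t^{\ell(w_{J/J'})-\ell(y)}\,\frakn^{J'}_{wy}.
\]
Then I write $C_v^J=\frakn_v^J+\sum_{w<v,\,w\in W^J}a_w\,\frakn_w^J$ with $a_w\in t\Z[t]$, substitute, and read off the expansion. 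The unique contribution to $\frakn^{J'}_{vw_{J/J'}}$ (with coefficient $1$) comes from the pair $(w,y)=(v,w_{J/J'})$; every other coefficient is either of the form $t^{\ell(w_{J/J'})-\ell(y)}$ with $\ell(w_{J/J'})-\ell(y)\ge 1$ (from $w=v$, $y<w_{J/J'}$) or $a_w\,t^{\ell(w_{J/J'})-\ell(y)}$ with $a_w\in t\Z[t]$ (from $w<v$), hence lies in $t\Z[t]$.

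The hardest part will be the Bruhat-order bookkeeping: I must verify that every such index $wy$ (other than $vw_{J/J'}$ itself) satisfies $wy<vw_{J/J'}$ and $wy\in W^{J'}$. The latter holds by the parabolic factorization just invoked. For the former, since $w,v\in W^J$ and $y,w_{J/J'}\in W_J$ the products $wy$ and $vw_{J/J'}$ are reduced, so the subword property (applied to fixed reduced words of $v$ and $w_{J/J'}$) yields $wy\le vw_{J/J'}$ from $w\le v$ and $y\le w_{J/J'}$; strict inequality then follows by comparing lengths, since $\ell(wy)=\ell(w)+\ell(y)<\ell(v)+\ell(w_{J/J'})=\ell(vw_{J/J'})$ unless both $w=v$ and $y=w_{J/J'}$. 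Once this bookkeeping is in place, uniqueness of the parabolic KL basis forces $\mu^{\ell(w_{J/J'})}\,i_{J/J'}(C_v^J)=C^{J'}_{vw_{J/J'}}$, which is the claim.
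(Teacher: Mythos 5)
Your proof is correct and follows the same route the paper indicates: the theorem is deduced from the uniqueness of the parabolic Kazhdan-Lusztig basis together with the fact that $i_{J/J'}$ commutes with the involution. The paper only cites this in one line before the statement; your version fills in the triangularity check (via Corollary~\ref{cor:1}~(5), the parabolic factorization $W^{J'}=W^J\cdot(W_J\cap W^{J'})$, and the subword argument for $wy\le vw_{J/J'}$), which is exactly what is left implicit there.
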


\begin{cor}\label{cor:C2}\hfill
\begin{enumerate} 
\item[{\rm (1)}] If $v\in W^J$, we have $C^{J}_v= \mu^{-\ell(w_{J})}C_{vw_{J}}= \mu^{-\ell(w_{J})}\psi(\gamma_{vw_{J}})\odot \pt^t\in \calN$.
\item[{\rm (2)}] As an element of $\DFhd$, the expansion of the parabolic KL-Schubert class $\mu^{-\ell(v)}C^J_v$ in the $\{f_w\}$ basis has coefficients in the subring of $S_t$ corresponding to the base ring ${\mathbb Z}[u]$ (rather than $\Rr$).
\item[{\rm (3)}] For $J\supset J'$, we have 
\begin{equation}\label{eq:2}C^{J'}_{w_{J/J'}}=\mu^{\ell(w_{J/J'})}C_{e}^J= \mu^{\ell(w_{J/J'})}\pt_J^t= \mu^{\ell(w_{J/J'})}\tfrac{x_\Pi}{x_J}\sum_{w\in W_J}f_w.
\end{equation}
In particular,  $ \mu^{-\ell(w_{\Pi/J})}C^{J}_{w_{\Pi/J}}=\sum_{w\in W}f_w=\unit\in \DFhd$.
\item[{\rm (4)}] In the limit $t\to 0$ (which implies $u\to 0$), the parabolic KL-Schubert class $\mu^{-\ell(v)}C^J_v$ becomes the Bott-Samelson class $\zeta_v^J$ in $K$-theory (which coincides with the corresponding Schubert class, defined topologically).
\end{enumerate}
\end{cor}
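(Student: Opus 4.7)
First I plan to handle parts~(1) and~(3) as direct specializations of Theorem~\ref{thm:C}. For~(1), taking $J'=\emptyset$ gives $w_{J/\emptyset}=w_J$, so the theorem reads $C^J_v=\mu^{-\ell(w_J)}C_{vw_J}$; combining with~\eqref{eq:CKL} supplies the remaining equality. For~(3), specializing Theorem~\ref{thm:C} to $v=e$ and inverting yields $C^{J'}_{w_{J/J'}}=\mu^{\ell(w_{J/J'})}C^J_e$; Definition~\ref{def:CJ} forces $C^J_e=\frakn^J_e=\pt^t_J$, and~\eqref{calcbs} together with $x_{\Pi/J}=x_\Pi/x_J$ gives the explicit formula. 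The ``in particular'' case follows by further specializing $J=\Pi$, since $W_\Pi=W$ and $x_{\Pi/\Pi}=1$ imply $\pt^t_\Pi=\unit$.

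For part~(2), my plan is to first use~(1) to rewrite
\[ \mu^{-\ell(v)}C^J_v=\mu^{-\ell(vw_J)}\psi(\gamma_{vw_J})\odot\pt^t, \]
which reduces the claim to showing that $\mu^{-\ell(w)}\psi(\gamma_w)\odot\pt^t$ has coefficients in $\Z[u][[\La]]_{F_t}$ for every $w\in W$. I would prove this by induction on $\ell(w)$. The base case $w=e$ is immediate from $\pt^t=x_\Pi f_e$. For the inductive step, I pick $s_i$ with $s_iv>v$ and apply the Kazhdan-Lusztig recursion
\[ \gamma_i\gamma_v=\gamma_{s_iv}+\sum_{u<v,\ s_iu<u}\mu(u,v)\,\gamma_u, \]
together with the identity $\mu^{-1}\psi(\gamma_i)=Y_i^t$, which follows at once from $\gamma_i=\tau_i+t$ and $\psi(\tau_i)=\mu Y_i^t-t$. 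Dividing by $\mu^{\ell(s_iv)}$ and acting on $\pt^t$ expresses $\mu^{-\ell(s_iv)}\psi(\gamma_{s_iv})\odot\pt^t$ as $Y_i^t\odot\bigl(\mu^{-\ell(v)}\psi(\gamma_v)\odot\pt^t\bigr)$ minus lower-length terms weighted by $\mu(u,v)\,u^{(\ell(s_iv)-\ell(u))/2}$. The decisive combinatorial input is the classical parity of the Kazhdan-Lusztig $\mu$-coefficients: $\mu(u,v)\neq 0$ forces $\ell(v)-\ell(u)$ to be odd, so $\ell(s_iv)-\ell(u)$ is even and the $\mu$-weights land in $\Z[u]$. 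Combined with the inductive hypothesis and the fact that $Y_i^t\odot$ preserves $\Z[u]$-rationality of coefficients, the induction closes.

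For part~(4), I would again start from part~(1) and expand $\gamma_{vw_J}=\sum_u t^{\ell(vw_J)-\ell(u)}P_{u,vw_J}\,\tau_u$. Since the Kazhdan-Lusztig polynomial $P_{u,vw_J}\in\Z[q]=\Z[t^{-2}]$ has $q$-degree at most $(\ell(vw_J)-\ell(u)-1)/2$, each $u<vw_J$ term carries a factor $(t/\mu)^{\ell(vw_J)-\ell(u)}P_{u,vw_J}(t^{-2})$ whose order in $t$ at $t=0$ is at least $\ell(vw_J)-\ell(u)+1\ge 2$; hence all such terms vanish in the limit. The surviving leading term is $\mu^{-\ell(vw_J)}\psi(\tau_{vw_J})\odot\pt^t$, which tends to $Y^m_{vw_J}\odot\pt^m$ (as $\mu^{-1}\psi(\tau_i)=Y_i^t-t/\mu\to Y_i^m$) and, by Remark~\ref{rempbs}(ii) applied in $K$-theory, coincides with the Bott-Samelson class $\zeta_{vw_J}=\zeta^J_v$.

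The hard part will be the auxiliary fact used in the induction for~(2) that $Y^t_i\odot$ preserves $\Z[u]$-rationality of the $\{f_w\}$-coefficients. Using the explicit formula for the $\odot$-action of $Y_\al$, this amounts to checking that $p_v/x_{-\al}+s_\al(p_{s_\al v}/x_{-\al})$ lies in $\Z[u][[\La]]_{F_t}$ whenever the $p_v$ do. The simplifications $\kappa_\al=1$ and $1/x_{-\al}=1-1/x_\al$, both consequences of $\mu_1=1$ in the hyperbolic formal group law~$F_t$, should render this verification routine by reducing the combination to a standard divided-difference expression plus an element of $\Z[u][[\La]]_{F_t}$.
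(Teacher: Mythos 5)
Your treatment of parts (1) and (3) matches the paper exactly: (1) is Theorem~\ref{thm:C} with $J'=\emptyset$ together with \eqref{eq:CKL}, and (3) is Theorem~\ref{thm:C} with $v=e$ together with $C^J_e=\frakn^J_e=\pt^t_J$ and the explicit formula~\eqref{calcbs}. For (2) and (4), the first move in both your proof and the paper's is the same, namely using (1) to reduce to the Borel case $\mu^{-\ell(w)}\psi(\gamma_w)\odot\pt^t$; but from there the paper simply cites the already-established Borel results \cite[Proposition~3.4, Corollary~3.6]{LZ1}, whereas you re-derive them. Your derivation of (4) via the bound $\deg_q P_{u,vw_J}\le(\ell(vw_J)-\ell(u)-1)/2$ and the observation $t/\mu=t^2/(t^2+1)$ is correct and a nice, self-contained alternative.

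Your inductive argument for (2), however, has a gap precisely at the step you flag as ``routine.'' Using $1/x_{-\alpha}=1-1/x_\alpha$, the $f_v$-coefficient of $Y_i^t\odot\sum p_wf_w$ is $p_v+\frac{s_\alpha(p_{s_\alpha v})-p_v}{x_\alpha}$. The numerator $s_\alpha(p_{s_\alpha v})-p_v$ involves two \emph{different} functions, so this is not ``a standard divided-difference expression plus an element of $\Z[u][[\La]]_{F_t}$''; at best you can split it as $-X_\alpha \cdot p_{s_\alpha v}+\frac{p_{s_\alpha v}-p_v}{x_\alpha}$, where the first term is a genuine divided difference but the second one is not. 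You know that $\frac{p_{s_\alpha v}-p_v}{x_\alpha}$ lies in $S_t=\Rr[[\La]]_{F_t}$ (because $Y_i^t\odot$ preserves $\DFhd$, i.e., because the GKM-type conditions hold over $\Rr$), but upgrading this to $\Z[u][[\La]]_{F_t}$ requires showing that divisibility by $x_\alpha$ descends from $\Rr[[\La]]_{F_t}$ to $\Z[u][[\La]]_{F_t}$. This is a genuine claim (e.g.\ when $\alpha$ is not primitive in $\La$, the linear term of $x_\alpha$ is $m\cdot x_{\lambda}$ with $|m|>1$, so naive division introduces denominators), and it is not addressed. So as written, the inductive step for (2) is incomplete; the paper avoids this by handing off the Borel case to [LZ1].
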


\begin{proof}  (1) The first identity follows from Theorem~\ref{thm:C} by letting $J'=\emptyset$, and the second one follows from \eqref{eq:CKL}.

(2) This result follows from the first part of the corollary and \cite[Proposition~3.4]{LZ1}, which it generalizes. 

(3) In \eqref{eq:2}, the first identity follows from Theorem~\ref{thm:C} by letting $v={e}$, the second one follows from the fact that $C^J_e=\frakn_e^J=\pt_J^t$, and the last one follows from \cite[Lemma~6.6]{CZZ1}. 

The last part follows from \eqref{eq:2}.

(4) This result follows from the first part of the corollary and \cite[Corollary~3.6]{LZ1}, which it generalizes. Recalling the notation in \S\ref{heckeactions}, we also use the fact that $\zeta_v^J=A_J(\zeta_v)$ coincides with $\zeta_{vw_J}$ in $K$-theory, for $v\in W^J$. For example, see Remark~\ref{rempbs}~(ii).  
\end{proof}

\subsection{Smoothness}\label{subsec:smooth}
In the $G/B$ case, it was conjectured in \cite{LZ1} that if the Schubert variety corresponding to $v$ is smooth, then $C_v$ is equal to the corresponding topologically defined Schubert class. We now generalize this conjecture to the parabolic case, and then prove some special cases; other results are also derived along the way.

We first recall from \cite{CP94,CK03} the localization formula for the class $[X(v)]$ of a smooth Schubert variety in $G/P$, where $v,w\in W^J$:
\begin{equation}\label{smooth}
[X(v)]_w=\tfrac{\displaystyle{\prod_{\al\in w\left(\Sigma_{\Pi/J}^-\right)}x_{\al}}}{\displaystyle{\prod_{\stackrel{\al\in w\left(\Sigma_{\Pi/J}^-\right)}{s_\al w\le vw_J}} x_{\al} }}\,,  
\end{equation}
if $w\le v$, and otherwise $[X(v)]_w=0$.

\begin{conj}\label{smoothconj} If the Schubert variety $X(v)$ in $G/P$ is smooth, then the topologically defined Schubert class $[X(v)]$ given by {\rm \eqref{smooth}} coincides with the parabolic KL-Schubert class $\mu^{-\ell(v)}C^{J}_v$. 
\end{conj}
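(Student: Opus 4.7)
The plan is to reduce this parabolic statement to the corresponding $G/B$ statement (the $J=\emptyset$ case, which was conjectured in \cite{LZ1}), and then attack the $G/B$ case via the uniqueness of the Kazhdan--Lusztig basis. The reduction rests on two compatible functoriality statements: the KL-Schubert-class functoriality supplied by Theorem~\ref{thm:C}, and the geometric functoriality of smooth Schubert classes under the projection $\pi\colon G/B\to G/P_J$.

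For $v\in W^J$ with $X(v)\subset G/P_J$ smooth, I would first check that $X(vw_J)=\pi^{-1}(X(v))\subset G/B$ is also smooth (since $\pi$ is a smooth fibration with fiber $P_J/B$) and that $\pi^*[X(v)]=[X(vw_J)]$ in $\hh_T(G/B)$. Under the embedding $i_{J/\emptyset}\colon \DFhdP{J}\hookrightarrow \DFhd$, which realizes $\pi^*$, this identifies the two smooth Schubert classes. On the KL side, Theorem~\ref{thm:C} with $J'=\emptyset$ gives
\[
\mu^{-\ell(v)}C_v^J \;=\; \mu^{-\ell(v)-\ell(w_J)}C_{vw_J} \;=\; \mu^{-\ell(vw_J)}C_{vw_J}
\]
in $\calN\subset \DFhd$, where I used that $\ell(vw_J)=\ell(v)+\ell(w_J)$ for $v\in W^J$. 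Combining these, the parabolic conjecture for $v$ becomes equivalent to the $G/B$ conjecture for $u:=vw_J$, modulo a combinatorial verification that the localization formula \eqref{smooth} for $G/P_J$ at $w\in W^J$ agrees with the analogous formula for $G/B$ at $w$; the latter uses the decomposition $\Sigma_\Pi^-=\Sigma_{\Pi/J}^-\sqcup\Sigma_J^-$ together with the Bruhat-order behavior of the factorization $W=W^J\cdot W_J$.

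For the $G/B$ case, I would invoke the uniqueness of $C_u$: it is the unique element of $\calN$ of the form $\psi(\tau_u)\odot\pt^t + \sum_{w<u}t\,\Z[t]\,\psi(\tau_w)\odot\pt^t$ that is fixed by the KL involution. The goal is then to show that $\mu^{-\ell(u)}[X(u)]$ meets both conditions. The leading-term and lower-order coefficient conditions should follow inductively: expand $\mu^{-\ell(u)}[X(u)]$ in the rescaled basis $\{\mu^{-\ell(w)}\psi(\tau_w)\odot\pt^t\}_{w\le u}$ and compute coefficients via the restriction formula \eqref{smooth} combined with the recursive action of $Y_i^t\odot(-)$ on Bott-Samelson classes from Remark~\ref{rempbs} and Proposition~\ref{lem:Yact}, checking the resulting coefficients lie in $t\,\Z[t]$ by an induction on $\ell(u)-\ell(w)$.

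The main obstacle is the KL-invariance. The involution $\overline{\phantom{x}}$ is defined purely algebraically through $H$, while $[X(u)]$ is described geometrically via its $T$-fixed-point restrictions in \eqref{smooth}; bridging these perspectives is the delicate part. A natural route is to exploit the Carrell--Peterson criterion, which characterizes smoothness of $X(u)$ by the triviality of all Kazhdan--Lusztig polynomials $P_{w,u}$ with $w\le u$. Substituted into the definition of $C_u$, this would give an explicit closed-form expansion and reduce bar-invariance to a combinatorial identity matching $\mu^{-\ell(u)}[X(u)]$ fixed point by fixed point. Translating Carrell--Peterson into the actual bar-invariance of those fixed-point restrictions is the crux, and it is exactly this step that is likely to force the argument to specialize to situations, like projective spaces in \S\ref{subsec:projective}, where the Bruhat combinatorics and the $\psi$-image of the involution become tractable.
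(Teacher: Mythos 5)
The statement you are attacking is Conjecture~\ref{smoothconj}, and the paper does \emph{not} prove it: it is presented as an open conjecture, with only special cases established in Corollary~\ref{cor:smooth}. So there is no full proof in the paper to compare against, and your proposal does not close either. The reduction from $G/P_J$ to $G/B$ that you carry out via Theorem~\ref{thm:C} is correct and is exactly the last assertion of Corollary~\ref{cor:smooth}; that part is sound. But the substance you then need in $G/B$, namely that the renormalized geometric class $\mu^{-\ell(u)}[X(u)]$ is fixed by the Kazhdan--Lusztig bar involution, is precisely the open content of the conjecture, and your sketch does not supply it. You acknowledge this yourself. The route you float via the Carrell--Peterson criterion is also imprecise: vanishing of all Kazhdan--Lusztig polynomials below $u$ characterizes \emph{rational} smoothness of $X(u)$, which coincides with actual smoothness only in simply-laced types, and even granting smoothness, converting triviality of those polynomials into bar-invariance of the fixed-point restriction vector of \eqref{smooth} is an unproved step.

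For the special cases that the paper does resolve, its method is genuinely different from yours and worth contrasting. Rather than verify that the geometric class satisfies the two defining properties of $C_u$ (triangularity in $t\Z[t]$ and bar-invariance), the paper runs the argument in the opposite direction, using the criterion of \cite[Lemma~4.1(2)]{LZ1} extended to the parabolic setting via Corollary~\ref{cor:C2}(4): for non-singular $X(v)$, once one knows that the $\{f_w\}$-coefficients of $\mu^{-\ell(v)}C^J_v$ are products $\prod_\al x_\al$ over subsets of negative roots, the KL-Schubert class equals the topological class automatically. This localization-shape condition is then checked by direct computation: Corollary~\ref{cor:C2}(3) handles the $w_{J/J'}$ family, the explicit recursion in \S\ref{subsec:projective} handles projective spaces, and the type $A$/$C$ Grassmannian cases are imported from \cite{LZ1}. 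That criterion is much more tractable in practice than the uniqueness characterization you propose, because a factorization shape of localizations is a concrete moment-graph calculation, whereas bar-invariance of a geometrically defined vector is not directly visible there. Your sketch therefore hits the same wall the paper flags by leaving the statement as a conjecture, and it also parts ways with the technique the paper uses where it does succeed.
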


Recall that the Weyl group for the root system of type $C_n$ is the group of signed permutations, represented (in the window notation) as words of length $n$ with letters $\{1,\ldots,n,\overline{n},\ldots,\overline{1}\}$; we use the indexing of the corresponding Dynkin diagram $\Pi:=\{0,\ldots,n-1\}$, where the $0$-node is the sign change in position $1$. 

\begin{cor}\label{cor:smooth} Conjecture~\ref{smoothconj} is true in the following cases (which all correspond to non-singular Schubert varieties): 
\begin{enumerate}
\item[{\rm (1)}] in all types for Schubert varieties indexed by $w_{J/J'}$ in $G/P_{J'}$;
\item[{\rm (2)}] in type $A_{n-1}$ in the maximal parabolic case $J=\Pi\setminus\{n-k\}$ (with $1<k\le n-1$), for $v\in W^J$ of the form $[k,k+1,\ldots,n-1,1,2,\ldots,k-1,n]$ (in one-line notation);
\item[{\rm (3)}] in type $C_n$ in the maximal parabolic case $J=\Pi\setminus\{k\}$ (with $1\le k\le n-1$), for $v\in W^J$ of the form $[1,2,\ldots,k-1,n,\overline{n-1},\overline{n-2},\ldots,\overline{k}]$ (in the window notation);
\item[{\rm (4)}] for the complex projective spaces.
\end{enumerate}
In addition, the conjecture is true for a Schubert variety in $G/P_J$ indexed by $v\in W^J$ if and only if it is true for the one indexed by $vw_{J/J'}$ in $G/P_{J'}$, for $J'\subset J$.
\end{cor}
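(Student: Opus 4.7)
The plan is to deduce all four listed cases from the concluding \emph{if and only if} assertion, which itself follows quickly from Theorem~\ref{thm:C} and the standard functoriality of Schubert classes under the natural projection $\pi\colon G/P_{J'}\to G/P_J$. Fix $J'\subset J$ and $v\in W^J$, and set $v':=vw_{J/J'}\in W^{J'}$. The factorization is length-additive (since $v\in W^J$ and $w_{J/J'}\in W_J$), so $\ell(v')=\ell(v)+\ell(w_{J/J'})$, and Theorem~\ref{thm:C} gives
\[
i_{J/J'}\bigl(\mu^{-\ell(v)}C^J_v\bigr)=\mu^{-\ell(v')}C^{J'}_{v'}\,,
\]
showing that the normalized parabolic KL-Schubert classes for $v$ and $v'$ correspond under $i_{J/J'}$. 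On the topological side, $\pi^{-1}(X(v))=X(v')$ is a $P_J/P_{J'}$-bundle over $X(v)$, so $X(v')$ is smooth iff $X(v)$ is, and $\pi^*[X(v)]=[X(v')]$. Under the identification $\DFhdP{J}\cong\hh_T(G/P_J)$ from \cite[Theorem~8.11]{CZZ2}, the pullback $\pi^*$ coincides with $i_{J/J'}$; since $i_{J/J'}$ is injective, Conjecture~\ref{smoothconj} for $v$ in $G/P_J$ is equivalent to the same statement for $v'$ in $G/P_{J'}$.

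For part (1), I would apply this equivalence with $v=e\in W^J$, in which case $v'=w_{J/J'}$. The Schubert variety $X(e)\subset G/P_J$ is the identity $T$-fixed point and is manifestly smooth; the localization formula~\eqref{smooth} degenerates to $[X(e)]_e=x_{\Pi/J}$ (the denominator is empty because no $s_\al$ with $\al\in\Sigma^-_{\Pi/J}$ lies in $W_J$) and $[X(e)]_w=0$ for $w\neq e$, which by~\eqref{calcbs} is precisely $\pt_J^t$. On the other hand, Corollary~\ref{cor:C2}(3) yields $C^J_e=\pt_J^t$. Hence Conjecture~\ref{smoothconj} is trivially true for $v=e$ in $G/P_J$, and the equivalence transfers it to $w_{J/J'}$ in $G/P_{J'}$.

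For part (4), I would invoke the explicit calculation carried out in~\S\ref{subsec:projective}: for a complex projective space every normalized parabolic KL-Schubert class $\mu^{-\ell(v)}C^J_v$ is computed by the $\odot$-recursion from Proposition~\ref{lem:Yact}, and the resulting restriction formula is compared directly with the localizations produced by~\eqref{smooth} to establish the conjectural equality for every $v$.

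For parts (2) and (3), the indicated $v$ lies in a fully commutative parabolic quotient of a (co)minuscule type (the type $A$ Grassmannian and the Lagrangian Grassmannian), and in such a setting the parabolic Kazhdan-Lusztig polynomials $P^J_{w,v}$ admit a simple explicit description (cf.\ Proposition~\ref{prop:KLpara} below). The plan is to substitute the known values of $P^J_{w,v}$ into~\eqref{defpkls}, expand each $\frakn^J_w$ into the localization basis $\{f_u\}$ using Lemma~\ref{lem:Heckeactions} combined with the recursion~\eqref{extopclass}, and then verify via the formal group identities~\eqref{ident} that at each $u\le v$ the coefficient collapses to $\prod_{\al\in u(\Sigma^-_{\Pi/J}),\ s_\al u\le vw_J}x_\al$. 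The main obstacle will be the combinatorial bookkeeping: one must control the cancellations among the many $u$-dependent correction terms produced by the recursion and match the surviving factors precisely to the inversion set cut out by~\eqref{smooth}. In the (co)minuscule setting this is tractable because any two reduced words for $v$ are related by commutation moves only, so the inductive expansion can be organized around a single canonical reduced word adapted to the minuscule poset, reducing the verification to a finite family of local identities in $S_t$.
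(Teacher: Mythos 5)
Your argument for the final \emph{if and only if} statement and for parts~(1) and~(4) is essentially sound, but it diverges from the paper's route, and parts~(2) and~(3) contain a genuine gap.

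For the equivalence and part~(1), the paper does not reason geometrically through the fibration $\pi\colon G/P_{J'}\to G/P_J$ as you do; instead it invokes \cite[Lemma~4.1~(2)]{LZ1}, a characterization saying that if the localizations of a (parabolic) KL-Schubert class are products of $x_\al$ over subsets of the negative roots, then the class must equal the topologically defined Schubert class. Combining that criterion with Theorem~\ref{thm:C} and Corollary~\ref{cor:C2} yields both the equivalence and part~(1) without ever discussing $\pi^{-1}(X(v))$. Your geometric argument (that $\pi^{-1}(X(v))=X(vw_{J/J'})$ is a $P_J/P_{J'}$-bundle, hence smooth iff $X(v)$ is, and that $\pi^*[X(v)]=[X(vw_{J/J'})]$ matches $i_{J/J'}$) is plausible and, if written out with care, gives an alternative derivation; what it buys you is a proof that avoids importing \cite[Lemma~4.1~(2)]{LZ1}, at the price of having to establish the compatibility of the identification $\DFhdP{J}\cong\hh_T(G/P_J)$ with pullbacks and fundamental classes, which the paper sidesteps.

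The real gap is in parts~(2) and~(3). You describe a plan — expand via \eqref{defpkls}, push through the $\odot$-recursion, cancel using \eqref{ident} — but you explicitly acknowledge that ``the main obstacle will be the combinatorial bookkeeping'' and you do not carry it out. That is not a proof. The paper handles these two cases by citing \cite[Theorem~3.14~(2)]{LZ1}, a prior theorem that already computes (in the $G/B$ case) the KL-Schubert classes $C_{vw_J}$ for exactly the families of permutations and signed permutations listed in~(2) and~(3) and shows they have the required product form; the parabolic statement then follows by Corollary~\ref{cor:C2}~(1) and \cite[Lemma~4.1~(2)]{LZ1}. Without either invoking that result or actually completing the expansion you outline, your treatment of~(2) and~(3) remains incomplete.
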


\begin{proof} We need \cite[Lemma~4.1~(2)]{LZ1}; this says that, given a non-singular Schubert variety, if the coefficients in the expansion of a KL-Schubert class (as an element of $\DFhd$) are products $\prod_\al x_{\al}$ over some subsets of the negative roots, then the class coincides with the topologically defined Schubert class. This lemma was proved in the $G/B$ case, but it extends in a straightforward way to the parabolic case, using Corollary~\ref{cor:C2}~(4). This lemma combined with Theorem~\ref{thm:C} imply the last statement of the corollary. Moreover, based on the lemma, part (1) follows from Corollary~\ref{cor:C2}~(3), while parts (2) and (3) follow from \cite[Theorem~3.14~(2)]{LZ1} and Corollary~\ref{cor:C2}~(1). Finally, part (4) is the content of Theorem~\ref{thmcpn}.
\end{proof}

\subsection{Positivity}\label{subsec:positive}
A positivity property in the $G/B$ case was conjectured \cite[Conjecture~6.4]{LZ} for the (hyperbolic) Bott-Samelson classes, and in \cite[Conjecture~3.9]{LZ1} for the corresponding KL-Schubert classes. Here we conjecture the same property for the parabolic KL-Schubert classes. If the conjecture is true, it would be interesting to find the geometric reason behind it.

\begin{conj}
The coefficient of $f_w$ in the expansion of the parabolic KL-Schubert class $\mu^{-\ell(v)}C^J_v$ (as an element of $\DFhd$, where $w\le v$) can be expressed as a (possibly infinite) sum of monomials in $x_{\al}$, where $\al$ are negative roots, such that the coefficient of each monomial is of the form 
\[(-1)^{k-(N-\ell(v))}\, c\, u^{(m-k)/2}\,;\]
 here $c$ is a positive integer, $m$ is the degree of the monomial, $N-\ell(v)\le k\le m$, $m-k$ is even, and $N$ is the cardinality of $\Sigma_{\Pi/J}^+$.  
\end{conj}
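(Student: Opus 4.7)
The statement is an open conjecture, so the strategy below is a proposed attack rather than a reconstruction of a known proof. The plan is to reduce the parabolic conjecture to the already-conjectured non-parabolic version, and then outline how one might induct on $\ell(v)$ in the $G/B$ case using the Kazhdan--Lusztig product formula.

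\emph{Reduction to $G/B$.} By Corollary~\ref{cor:C2}(1), viewing $C^J_v$ inside $\DFhd$, one has
\[
\mu^{-\ell(v)}C^J_v \;=\; \mu^{-\ell(v)-\ell(w_J)}C_{vw_J} \;=\; \mu^{-\ell(vw_J)}C_{vw_J},
\]
using $\ell(vw_J)=\ell(v)+\ell(w_J)$ since $v\in W^J$. Since $|\Sigma^+_J|=\ell(w_J)$, one further has $N-\ell(v)=|\Sigma^+|-\ell(vw_J)$, so the sign exponent $(-1)^{k-(N-\ell(v))}$ in the parabolic version matches the $G/B$ exponent $(-1)^{k-(|\Sigma^+|-\ell(vw_J))}$ for the class $\mu^{-\ell(vw_J)}C_{vw_J}$. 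Hence the parabolic conjecture for $v\in W^J$ in $G/P_J$ is exactly the non-parabolic conjecture \cite[Conjecture~3.9]{LZ1} for $vw_J$, and it suffices to establish the latter.

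\emph{Induction in the $G/B$ case.} The base $v=e$ gives $\pt^t=x_\Pi f_e$, whose nonzero coefficient is the single monomial $x_\Pi$ of degree $m=N=|\Sigma^+|$ with $k=N$, trivially matching the conjecture. For the inductive step, pick a simple reflection $s_i$ with $s_iv>v$. The Kazhdan--Lusztig product rule in $H$ gives
\[
\gamma_{s_i}\gamma_v \;=\; \gamma_{s_iv} + \sum_{\substack{w<v\\ s_iw<w}}\mu(w,v)\,\gamma_w,
\]
where $\mu(w,v)\in\Z_{\ge 0}$ by Elias--Williamson positivity. Applying $\psi$, using $\psi(\gamma_i)=\mu Y_i^t$ from \eqref{psitau} (since $\gamma_i=\tau_i+t$), acting on $\pt^t$, and normalizing, this translates into the recursion
\[
\mu^{-\ell(s_iv)}C_{s_iv} \;=\; Y_i^t\odot\bigl(\mu^{-\ell(v)}C_v\bigr) \;-\; \sum_{\substack{w<v\\ s_iw<w}}\mu(w,v)\,u^{(\ell(v)-\ell(w)+1)/2}\,\mu^{-\ell(w)}C_w,
\]
the $u$-exponents being positive integers by the parity property of KL polynomials ($\ell(v)-\ell(w)$ is odd whenever $\mu(w,v)\neq 0$). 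One then seeks to deduce the conjecture for $\mu^{-\ell(s_iv)}C_{s_iv}$ from the inductive hypothesis applied to $\mu^{-\ell(v)}C_v$ and to each $\mu^{-\ell(w)}C_w$.

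\emph{Main obstacle.} The crux is controlling $Y_i^t\odot$ on monomial-in-$x_\beta$ ($\beta\in\Sigma^-$) expansions. From
\[
Y_i^t\odot(p_wf_w) \;=\; \tfrac{p_w}{x_{-\alpha_i}}\,f_w + \tfrac{s_i(p_w)}{x_{\alpha_i}}\,f_{s_iw},
\]
one must divide monomials by $x_{\pm\alpha_i}$ and apply $s_i$; using the hyperbolic identities \eqref{ident} together with $x_{-\alpha}=x_\alpha/(x_\alpha-1)$, the results re-expand as $\Z[u]$-linear combinations of monomials in negative-root variables, and the parity $m\equiv k\pmod 2$ is preserved because $u$ (of $\Ss$-degree $-2$) is the only extra parameter supplied by the hyperbolic FGL. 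Propagating the sign pattern $(-1)^{k-(|\Sigma^+|-\ell(v))}$ through the recursion then requires checking compatibility between the main term $Y_i^t\odot(\mu^{-\ell(v)}C_v)$ and the subtracted KL corrections, whose inductive signs $(-1)^{k-(|\Sigma^+|-\ell(w))}$ times the positive factor $u^{(\ell(v)-\ell(w)+1)/2}$ align with the target sign precisely because $\ell(v)-\ell(w)$ is odd. The deepest, and presumably hardest, point, and the reason the non-parabolic conjecture is still open, is to produce \emph{canonical, non-cancellative} positive-integer decompositions of the resulting coefficients, ruling out hidden cancellations both inside $Y_i^t\odot$ and when the KL correction is subtracted. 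A viable attack will likely need to combine the Elias--Williamson non-negativity of $\mu(w,v)$ with the Bott--Samelson positivity conjecture \cite[Conjecture~6.4]{LZ}, which governs the building blocks $Y_{I_w}^t\odot\pt^t$ into which $Y_i^t\odot(\mu^{-\ell(v)}C_v)$ can be expanded via the Kazhdan--Lusztig expansion of $C_v$.
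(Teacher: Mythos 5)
The statement you are given is an \emph{unproven conjecture} in the paper; no proof is offered there, so there is nothing to match your proposal against. Your proposal is consistent with this: you explicitly frame it as a plan of attack, not a proof, and that framing is the correct assessment of the statement's status.

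Within those terms, your reduction to the $G/B$ case is correct and is a genuine, if small, observation that the paper does not spell out. From Corollary~\ref{cor:C2}(1), $\mu^{-\ell(v)}C^J_v=\mu^{-\ell(vw_J)}C_{vw_J}$ inside $\DFhd$, and since $N-\ell(v)=|\Sigma^+|-\ell(w_J)-\ell(v)=|\Sigma^+|-\ell(vw_J)$ for $v\in W^J$, the parabolic positivity conjecture for $v$ in $G/P_J$ is \emph{exactly} the $J=\emptyset$ conjecture of \cite{LZ1} for $vw_J$; nothing is lost or gained. Your $G/B$ recursion is also formally correct: from $\psi(\gamma_i)=\mu Y_i^t$ (via \eqref{psitau}) and the Kazhdan--Lusztig multiplication formula for $\gamma_{s_i}\gamma_v$ (with $s_iv>v$), one gets
\[
\mu^{-\ell(s_iv)}C_{s_iv}=Y_i^t\odot\bigl(\mu^{-\ell(v)}C_v\bigr)-\sum_{\substack{w<v\\ s_iw<w}}\mu(w,v)\,u^{(\ell(v)-\ell(w)+1)/2}\,\mu^{-\ell(w)}C_w,
\]
and the $u$-exponents are positive integers because $\mu(w,v)=0$ unless $\ell(v)-\ell(w)$ is odd. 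You also correctly locate the crux: this recursion does not by itself control cancellations, and the step of finding a canonical monomial expansion compatible with the prescribed sign pattern --- both after applying $Y_i^t\odot$ (which introduces divisions by $x_{\pm\alpha_i}$ and reflections) and after subtracting the KL corrections --- is where a proof would have to do real work. None of this is a gap in your argument, since you do not claim to close the conjecture; it is an accurate description of why it remains open.
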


\begin{rem} (i) The above positivity property is a generalization of the one in $K$-theory which is made explicit in Graham's formula \cite{graekt} for the localization of Schubert classes at torus fixed points, cf. also~\cite{LZ}.

(ii) The conjecture does not hold for the parabolic Bott-Samelson classes, cf. the example in \S\ref{secex}. However, when passing to the corresponding KL-Schubert classes, the terms violating the positivity condition disappear, so the conjecture holds; see Example~\ref{expkls}.
\end{rem}

\subsection{Other results in Kazhdan-Lusztig theory reinterpreted} \label{sec:paraBorel}
The following corollary was known to experts, since it is equivalent to Proposition~\ref{prop:KLpara}, which is a classical result in \cite{D87}. Here we give a new proof  using the embedding $i_J$. 
\begin{cor}
\begin{enumerate}
\item[{\rm (i)}] For any $z\in H$, we have 
\begin{gather*}
\psi(z)\odot\frakn_{e}^{J}=C^{J}_v\Longleftrightarrow z\gamma_{J}=\gamma_{vw_J}.
\end{gather*}
\item[{\rm (ii)}] For any $v\in W^J$, there exists $z\in \sum_{w\in W^J}\Rr \tau_w $ such that $\gamma_{vw_J}=z\gamma_{w_J}$.
\end{enumerate}
\end{cor}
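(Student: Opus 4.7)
The plan is to derive both parts directly from two identities already available in the parabolic setup: Corollary~\ref{cor:1}~(5) with $J'=\emptyset$, which reads $\psi(z)\odot\pt_J^t = \mu^{-\ell(w_J)}\psi(z\gamma_J)\odot\pt^t$, and Corollary~\ref{cor:C2}~(1), which reads $C_v^J = \mu^{-\ell(w_J)}\psi(\gamma_{vw_J})\odot\pt^t$ for $v\in W^J$. Recalling that $\frakn_e^J=\pt_J^t$, the equality $\psi(z)\odot\frakn_e^J = C_v^J$ is equivalent, after cancelling the scalar $\mu^{-\ell(w_J)}$, to $\psi(z\gamma_J)\odot\pt^t = \psi(\gamma_{vw_J})\odot\pt^t$. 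I would then reduce this to $z\gamma_J=\gamma_{vw_J}$ by invoking the injectivity of the composition $H \xrightarrow{\psi} \Dh \hookrightarrow \DFh \xrightarrow{(-)\odot\pt^t} \DFhd$: indeed, $\psi$ is an isomorphism onto $\Dh$ by the discussion in \S\ref{sec:FmtoFh} and Remark~\ref{rem:psim}; by definition of the $\odot$-action, $z'\odot\pt^t = \iota(z')\bullet\pt^t$ with $\iota$ an anti-involution of $\DFh$; and $\DFh\to\DFhd,\; z'\mapsto z'\bullet\pt^t$, is injective (it is an isomorphism) by Theorem~\ref{thm:DcFdbasis}. This proves part (i).

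For part (ii), I would use the explicit expansion \eqref{defpkls}, namely $C_v^J = \sum_{w\le v,\,w\in W^J} t^{\ell(v)-\ell(w)}P^J_{w,v}\,\frakn_w^J$, and then set
\[
z := \sum_{w\le v,\,w\in W^J} t^{\ell(v)-\ell(w)}P^J_{w,v}\,\tau_w \;\in\; \sum_{w\in W^J}\Rr\,\tau_w.
\]
Since $\frakn_w^J = \psi(\tau_w)\odot\frakn_e^J$ by definition and $\psi$ is a ring homomorphism, applying $\psi(\_)\odot\frakn_e^J$ to $z$ termwise yields $\psi(z)\odot\frakn_e^J = C_v^J$. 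The conclusion $z\gamma_J=\gamma_{vw_J}$ then follows from part (i). The only delicate point in the whole argument is the injectivity claim used in part (i); once that is in place, everything else is routine bookkeeping using the formulas in Corollaries~\ref{cor:1} and~\ref{cor:C2}, so I do not anticipate any serious obstacle.
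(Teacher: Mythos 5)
Your proposal is correct and follows essentially the same route as the paper's proof: both start from Corollary~\ref{cor:1}~(5) with $J'=\emptyset$ and the formula $C^J_v=\mu^{-\ell(w_J)}\psi(\gamma_{vw_J})\odot\pt^t$, and both close part~(i) by observing that $H\ni z\mapsto\psi(z)\odot\pt^t\in\DFhd$ is injective (the paper cites Lemma~\ref{lem:DFhH} transported to $F_t$; you unwind it directly via $\iota$ and Theorem~\ref{thm:DcFdbasis}, which is equivalent). For part~(ii), the paper argues surjectivity of $\sum_{w\in W^J}\Rr\tau_w\to\calN^J$ abstractly via the basis $\{\frakn^J_w\}$, while you simply write out the explicit $z$ afforded by~\eqref{defpkls}; this is the same argument made constructive.
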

\begin{proof}(i) Inside $\calN$, by Corollary~\ref{cor:1}~(5), we know that $z\odot\frakn_e^{J}= \mu^{-\ell(w_J)}\psi(z\gamma_J)\odot \pt^t,$ and by Theorem~\ref{thm:C} we have $C^{J}_v= \mu^{-\ell(w_J)}\psi(\gamma_{vw_J})\odot \pt^t$.
By Lemma~\ref{lem:DFhH} the map $H\to \calN, z\mapsto \psi(z)\odot \pt^t$ is an isomorphism, 
we get that  $\psi(z)\odot\frakn_e^{J}=C^{J}_v$ if and only if  $z\gamma_J=\gamma_{vw_J}$. 

(ii) The set $\{\frakn^{J}_w\}_{w\in W^J}$ is a $\Rr$-basis of $\calN^{J}$, and we know that $\frakn^{J}_w=\psi(\tau_w)\odot \frakn_e^{J}$, so the map $\sum_{w\in W^J}\Rr\tau_w\to \calN^J, z\mapsto \psi(z)\odot \frakn_e^J$ is surjective. Therefore, for any $v\in W^J$, there exists $z\in \sum_{w\in W^J}\Rr \tau_w$ such that $\psi(z)\odot \frakn_e^{J}=C^{J}_v$. Then part (i) implies that $z\gamma_{w_J}=\gamma_{vw_J}$.
\end{proof}

The following result appears as \cite[Proposition~3.4]{D87} and \cite[Proposition~3.4]{So97}; it follows from Theorem~\ref{thm:C} and the uniqueness of the parabolic Kazhdan-Lusztig basis.
 
\begin{prop}\label{prop:KLpara} \cite[Proposition 3.4]{D87}
	$P_{wu, vw_J}=P^{J}_{w,v}$ for $w,v\in W^J, u\in W_J$.
	\end{prop}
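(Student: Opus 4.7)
The plan is to expand both sides of the identity
\[
C^{J}_{v}\;=\;\mu^{-\ell(w_{J})}\,C_{vw_{J}}
\]
(which is Theorem~\ref{thm:C} with $J'=\emptyset$) in the common basis $\{\psi(\tau_{x})\odot\pt^{t}\}_{x\in W}$ of $\calN$, and then compare coefficients.

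First I will rewrite the parabolic side. By Corollary~\ref{cor:1}~(5), one has $\frakn^{J}_{w}=\mu^{-\ell(w_{J})}\psi(\tau_{w}\gamma_{J})\odot\pt^{t}$, so the defining expansion \eqref{defpkls} gives
\[
C^{J}_{v}\;=\;\mu^{-\ell(w_{J})}\sum_{\substack{w\in W^{J}\\ w\le v}} t^{\ell(v)-\ell(w)}P^{J}_{w,v}\,\psi(\tau_{w}\gamma_{J})\odot\pt^{t}.
\]
Since $P_{u,w_{J}}=1$ for every $u\le w_{J}$, we have $\gamma_{J}=\sum_{u\in W_{J}}t^{\ell(w_{J})-\ell(u)}\tau_{u}$; and for $w\in W^{J}$, $u\in W_{J}$ the product $wu$ is length-additive, so $\tau_{w}\tau_{u}=\tau_{wu}$. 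Using $\ell(v)+\ell(w_{J})=\ell(vw_{J})$ and $\ell(w)+\ell(u)=\ell(wu)$, this collapses to
\[
\mu^{\ell(w_{J})}C^{J}_{v}\;=\;\sum_{\substack{w\in W^{J},\,w\le v\\ u\in W_{J}}} t^{\ell(vw_{J})-\ell(wu)}\,P^{J}_{w,v}\,\psi(\tau_{wu})\odot\pt^{t}.
\]

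Second, I will write the non-parabolic side directly: by \eqref{eq:CKL} and the definition of the Kazhdan--Lusztig polynomials,
\[
C_{vw_{J}}\;=\;\sum_{x\le vw_{J}} t^{\ell(vw_{J})-\ell(x)}P_{x,vw_{J}}\,\psi(\tau_{x})\odot\pt^{t}.
\]
The map $H\to\calN$, $z\mapsto\psi(z)\odot\pt^{t}$, is an isomorphism (the analogue of Lemma~\ref{lem:DFhH} via the isomorphism $\psi\colon H\xrightarrow{\sim}\Dh$ of Remark~\ref{rem:psim}), so $\{\psi(\tau_{x})\odot\pt^{t}\}_{x\in W}$ is free over $\Rr$. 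Equating the two expansions coefficient by coefficient in this basis and writing each $x\in W$ uniquely as $x=wu$ with $w\in W^{J}$, $u\in W_{J}$, one gets
\[
P_{wu,\,vw_{J}}\;=\;\begin{cases} P^{J}_{w,v}, & w\le v,\\ 0, & w\not\le v,\end{cases}
\]
which is the statement (the second case is consistent because, in the classical Bruhat order, $wu\le vw_{J}$ with $w\in W^{J}$, $u\in W_{J}$ forces $w\le v$, hence the coefficient is automatically $0$ on the left-hand side as well).

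The only step that is not purely formal is the last parenthetical remark: one needs the standard fact that the parabolic projection $W\to W^{J}$ is order-preserving for the Bruhat order, so that the sum over $x\le vw_{J}$ on the non-parabolic side is supported on pairs $(w,u)$ with $w\le v$. Everything else is bookkeeping with exponents of $t$, the explicit formula for $\gamma_{J}$, and the length-additivity $\ell(wu)=\ell(w)+\ell(u)$ for $w\in W^{J}$, $u\in W_{J}$. I expect this compatibility with Bruhat order to be the only potential obstacle, but it is a well-known property of minimal coset representatives, so the proof should go through cleanly.
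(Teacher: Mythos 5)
Your proof is correct and takes essentially the same route as the paper, which disposes of this proposition in one line ("it follows from Theorem~\ref{thm:C} and the uniqueness of the parabolic Kazhdan--Lusztig basis"). You have simply written out the expansion of both sides of $C^{J}_{v}=\mu^{-\ell(w_J)}C_{vw_J}$ in the basis $\{\psi(\tau_x)\odot\pt^t\}$ and compared coefficients, with the standard Bruhat-order compatibility of $W\to W^J$ handling the case $w\not\le v$; this is exactly the computation the paper leaves implicit.
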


\subsection{Other parabolic KL-Schubert bases} 
Continuing Remark~\ref{rem:taum}, \S\ref{rem:gammam}, and Remark~\ref{rem:psim}, one can define $\calN^{J,-}=\psi^-(H^-)\odot\pt_J^t$ and $\frakn^{J,-}_v=\psi^-(\tau_v^-)\odot\pt^t_J, v\in W^J$, then the $\calN^J, \frakn^{J}_v$ defined in \S\ref{sec:FmtoFh} can be denoted by $\calN^{J,+}$ and $\frakn_v^{J,+}$, respectively. One can similarly define $C_v^{J,+,-}\in \calN^{J,+}$ and $C_v^{J,-,-}, C_v^{J,-,+}$ in $\calN^{J,-}$, and then Definition~\ref{def:CJ}  above can be denoted by $C_v^{J,+,+}\in \calN^{J,+}$. All properties concerning $\gamma^{+,+}$ and $C^{J,+,+}$ hold for the $(-,-)$ version, i.e., after replacing  $\psi^+$ by $\psi^-$, $\gamma^{+, +}$ by $\gamma^{-,-}$, $\calN^{J,+}$ by $\calN^{J,-}$, $\frakn^{J,+}$ by $\frakn^{J,-}$, $C^{J,+,+}$ by $C^{J,-,-}$, and  $\mu$ by $-\mu$. For example, $C_v^{-,-}=\psi^-(\gamma^{-,-}_v)\odot\pt^t$, and  the analogue to Theorem~\ref{thm:YPi} is
\[
Y_\Pi=(-\mu)^{-\ell(w_\circ)}\psi^-(\gamma^{-,-}_{w_\circ}).
\]
Considering the involution $\sigma$ on $\DFhd$ defined at the beginning of \S\ref{sec:FmtoFh}, we have 
\[
\sigma(\frakn^{J,+}_v)=\frakn^{J,-}_v, \quad \sigma(C^{J,+,+}_{v})=C^{J,-,-}_{v}, \quad \sigma(C^{J,+,-}_v)=C^{J,-,+}_v. 
\]
By applying $\sigma$ to the identity in Theorem~\ref{thm:C}, we obtain the following version:
\begin{equation}\label{eq:paramm} C^{J,-,-}_v=\sigma(C^{J,+,+}_v)=(-\mu)^{-\ell(w_{J/J'})}C^{J',-,-}_{vw_{J/J'}}\,.
\end{equation}

In fact, we can get the following more precise relationship between the above bases:
\[C_v^{J,-,-}=\sigma(C_v^{J,+,+})=\sigma(\mu^{\ell(v)})\,\sigma(\mu^{-\ell(v)}C_{v}^{J,+,+})\overset{\sharp}=(-1)^{\ell(v)}C_v^{J,+,+}\,.\]
Here $\sharp$ is based on Corollary~\ref{cor:C2}~(2), which implies that $\mu^{-\ell(v)}C_{v}^{J,+,+}$ is fixed by $\sigma$. Hence, by considering $(-\mu)^{-\ell(v)}C_v^{J,-,-}$, we obtain the same class as in Definition~\ref{defpklsc}. 

It is not difficult to check that $A_J:\calN^{\pm}\to \calN^{J,\pm}$ commutes with the Kazhdan-Lusztig involution, 
so we have 
\[
A_J(C_v^{+,-})=C_v^{J,+,-}, \quad A_J(C^{-,+}_v)=C_v^{J,-,+},\quad v\in W^J,
\]
which imply
\begin{equation}\label{eq:PJpm}
P^{J,+,-}_{u,v}=\sum_{w\in W_J}(-1)^{\ell(w)}P^{+,-}_{uw, v}, \quad P^{J,-,+}_{u,v}=\sum_{w\in W_J}(-1)^{\ell(w)}P^{-,+}_{uw, v}.
\end{equation}
Classically, in \cite{D87}, with $q=t^{-2}$, Deodhar considered $\gamma^{+,-}\in H^+, \gamma^{-,-}\in H^-$ and $C^{J,+,-}\in \calN^{J,+}, C^{J,-,-}\in \calN^{J,-}$. For example, in {\it loc.cit.}, Proposition~3.4 coincides with our Proposition~\ref{prop:KLpara} for $P^{-,-}$ which can be derived from \eqref{eq:paramm},  and Remark~3.8 is the first identity of \eqref{eq:PJpm}. In \cite{So97}, Proposition~3.4.1 and 3.4.2 correspond to our Proposition~3.4 and the second identity of \eqref{eq:PJpm}.

\begin{rem}
It seems that in order to generalize the Kazhdan-Lusztig polynomials to the parabolic case, $H^+$ (resp. $H^-$) and $P^{J,+,+}$ (resp. $P^{J,-,-}$) are the correct objects to generalize $P^+:=P^{+,+}$ (resp. $P^-:=P^{-,-}$). Moreover, we believe that the parabolic KL-Schubert class $\mu^{-\ell(v)}C_v^{J,+,+}$ will have geometric significance, cf. Conjecture~\ref{smoothconj}, Corollary~\ref{cor:C2}, and Corollary~\ref{cor:smooth}. 
\end{rem}

\subsection{Other hyperbolic formal group laws} So far we focused entirely on the hyperbolic formal group law \eqref{hypfgl} with the special choice of the parameters $\mu_1,\mu_2$ in \eqref{thypfgl}. So it is natural to ask what happens in general; for instance, the case of the Lorentz formal group law, given by $\mu_1=0$, is also an important one. 

We will show that the general case is related to the {\em generic Iwahori-Hecke algebra}, which depends on two parameters $q_1,q_2$. This is defined in the same way as the usual one, except that we use the more general relation $(T_i-q_1)(T_i-q_2)=0$. The classical case is recovered by setting $q_1=q$ and $q_2=-1$. It is well-known that one can define a Kazhdan-Lusztig basis for the generic Iwahori-Hecke algebra (via reduction to the classical case) whenever $-q_1q_2$ is a square in the base ring \cite{KL79}.

We have seen that the hyperbolic formal group law corresponds to the 2-parameter Todd genus $T_{\al,\be}$, where $\mu_1=\al+\be$ and $\mu_2=-\al\be$. So we assume that we have such elements $\al,\be$ in the base ring. Imposing the type $A$ braid relations on elements $T_i=aY_i+b$ in the hyperbolic Demazure algebra amounts to a quadratic equation for $b$. The solutions are $b=-a\al$ and $b=-a\be$. Correspondingly, using $Y_i^2=\mu_1 Y_i$, we easily obtain
\[(T_i+\al)(T_i-\be)=0\,,\;\;\;\;\mbox{and}\;\;\;\;(T_i-\al)(T_i+\be)=0\,.\]
Thus, the Kazhdan-Lusztig basis can be defined as long as $-\mu_2$ is a square in the base ring. In this case, all of the above constructions apply with minor changes. 

Recall that our previous choice \eqref{thypfgl} for $\mu_1,\mu_2$ corresponds to $\al=\tfrac{t}{t+t^{-1}}$ and $\be=\tfrac{t^{-1}}{t+t^{-1}}$. In this case, the two choices of $T_i$ above are precisely the $T_i^+$ and $T_i^-$, cf. Remark~\ref{tradgen}.

\subsection{Complex projective spaces}\label{subsec:projective}

We compute the Bott-Samelson classes and KL-Schubert classes corresponding to the complex projective space ${\mathbb P}^{n-1}$, so $W=S_n$, $J=\{2,\ldots,n-1\}$, and 
\[W^J=\{e,\,s_1,\,s_2s_1,\,\ldots,s_{n-1}\ldots s_1\}\,.\] 
For simplicity, we let $w_i:=s_i\ldots s_1$, where $w_0:=e$. We use the same notation as in \S\ref{secex} and Example~\ref{expkls}. Note that $\pt_J^t=[12]\ldots[1n]\sum_{w\in W_J} f_w$. 

We start with the Bott-Samelson classes $\zeta_{w_i}^J=Y^t_i\ldots Y^t_1\odot\pt_J^t$. We let 
\[\zeta_{w_i}^J=\sum_{j=1}^{i+1} q_{ij} \sum_{w\in W_J}f_{w_{j-1}w}\,.\]
Throughout, we make the convention that sums are $0$ and products are $1$ if the initial and final values of the corresponding index are in contradiction with the variation of this index (for instance, the initial value is strictly greater than the final value when the index is increasing).  

\begin{prop}\label{bscp} We have
\[q_{ij}=[j,i+2]\ldots[jn]+u\sum_{k=j+1}^{i}[jk]\ldots[jn]\,.\]
\end{prop}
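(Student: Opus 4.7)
The plan is to prove the formula by induction on $i$, using the recursion $\zeta_{w_i}^J=Y_i^t\odot\zeta_{w_{i-1}}^J$ from Remark~\ref{rempbs}~(i) together with the explicit action formula
\[
Y_\al\odot\sum_v p_v f_v=\sum_v\Bigl(\tfrac{p_v}{x_{-\al}}+s_\al\bigl(\tfrac{p_{s_\al v}}{x_{-\al}}\bigr)\Bigr)f_v
\]
from the lemma in \S\ref{subset:two}. The base case $i=0$ is immediate: $\zeta_{w_0}^J=\pt_J^t=[12]\cdots[1n]\sum_{w\in W_J}f_w$ matches $q_{01}=[12]\cdots[1n]$ (with empty correction sum).

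For the inductive step, since $\zeta_{w_{i-1}}^J\in(\DFhd)^{W_J}$, the coefficient $p_v$ is constant on each coset $vW_J$, so I compute the new coefficient at the representative $v=w_{j-1}$ for $j=1,\dots,i+1$. The analysis splits on the behavior of $s_iw_{j-1}$. When $j=i$, we have $s_iw_{i-1}=w_i$, which is outside the support of $\zeta_{w_{i-1}}^J$; the new coefficient reduces to $q_{i-1,i}/[i,i+1]=[i,i+2]\cdots[in]$, matching $q_{ii}$. When $j=i+1$, $s_iw_i=w_{i-1}$ and only the $s_i$ term contributes, giving $s_i([i,i+2]\cdots[in])=[i+1,i+2]\cdots[i+1,n]$, matching $q_{i,i+1}$ (using that $s_i$ swaps $\varepsilon_i\leftrightarrow\varepsilon_{i+1}$).

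The main case is $j\le i-1$, which forces $i\ge 2$ so that $s_i\in W_J$. Here $s_i$ commutes with $w_{j-1}$, so both $w_{j-1}$ and $s_iw_{j-1}=w_{j-1}s_i$ lie in $w_{j-1}W_J$ and contribute the same coefficient $q_{i-1,j}$. Using that $s_i$ swaps $[ji]\leftrightarrow[j,i+1]$ and fixes the other $[jk]$, the new coefficient equals
\[
[j,i+2]\cdots[jn]\Bigl(\tfrac{[j,i+1]}{[i,i+1]}+\tfrac{[ji]}{x_{\al_i}}\Bigr)+u\sum_{k=j+1}^{i-1}[jk][j,k+1]\cdots[jn]\Bigl(\tfrac{1}{[i,i+1]}+\tfrac{1}{x_{\al_i}}\Bigr).
\]
The second parenthesis equals $\kappa_{\al_i}=1$, a defining property of $F_t$ (from $F_t(x_\al,x_{-\al})=0$ one derives $x_\al+x_{-\al}=x_\al x_{-\al}$). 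The first parenthesis equals $1+u[ji][j,i+1]$ by the first identity in~\eqref{ident}, applied with $\al=-\al_i$ and $\be=\varepsilon_i-\varepsilon_j$ (so that $x_\al=[i,i+1]$, $x_{-\al}=x_{\al_i}$, $x_\be=[ji]$, $x_{\al+\be}=[j,i+1]$). The newly produced term $u[ji][j,i+1][j,i+2]\cdots[jn]$ is precisely the $k=i$ summand needed to extend the range of the correction sum from $i-1$ to $i$, yielding $q_{ij}$.

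The main obstacle is this last manipulation: choreographing the hyperbolic FGL identity and the $\kappa$-identity so that their combined output exactly inserts the missing $k=i$ term into the correction sum, turning $q_{i-1,j}/[i,i+1]+s_i(q_{i-1,j}/[i,i+1])$ into $q_{ij}$.
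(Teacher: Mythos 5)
Your proof is correct and follows essentially the same route as the paper's: induction on $i$ via $\zeta_{w_i}^J=Y^t_i\odot\zeta_{w_{i-1}}^J$, the same three-way case split on $j$, and the same two identities (the hyperbolic identity from~\eqref{ident} applied with $\alpha=-\alpha_i$, $\beta=\varepsilon_i-\varepsilon_j$, and $\kappa_{\alpha_i}=1$) to merge the two contributions in the $j<i$ case into the single extra $k=i$ summand. The only difference is presentational — you make the $W_J$-invariance of the coefficients and the behavior of $s_iw_{j-1}$ explicit before computing, whereas the paper just writes out the resulting sum directly.
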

\begin{proof} We proceed by induction on $i$, which starts at $i=0$. Assume that the formula holds for $i-1\ge 0$ and all $j=1,\ldots,i$. We compute $\zeta_{w_i}^J=Y^t_i\odot\zeta_{w_{i-1}}^J$. We have the following two cases.

{\em Case~{\rm 1}}: $j=i$ or $j=i+1$. We have
\begin{align*}q_{ii}&=\tfrac{[i,i+1][i,i+2]\ldots[in]}{[i,i+1]}=[i,i+2]\ldots[in]\,,\\q_{i,i+1}&=\tfrac{[i+1,i][i+1,i+2]\ldots[i+1,n]}{[i+1,i]}=[i+1,i+2]\ldots[i+1,n]\,.\end{align*}

{\em Case~{\rm 2}}: $j<i$. We have
\begin{align*}
q_{ij}&=\tfrac{1}{[i,i+1]}\left([j,i+1][j,i+2]\ldots[jn]+u\sum_{k=j+1}^{i-1}[jk]\ldots[jn]\right)\\
&+\tfrac{1}{[i+1,i]}\left([ji][j,i+2]\ldots[jn]+u\sum_{k=j+1}^{i-1}[jk]\ldots[jn]\right)\\
&=[j,i+2]\ldots[jn]+u[ji][j,i+1][j,i+2]\ldots[jn]+u\sum_{k=j+1}^{i-1}[jk]\ldots[jn]\,,
\end{align*}
as needed; here we used the first identity in \eqref{ident} and $k_\al=1$.
\end{proof}

For the calculation of the KL-Schubert classes, we need several lemmas.

\begin{lem}\label{lemma1} We have
\[\tau_k\ldots\tau_1\odot \pt_J^t=\left(\mu^k\,Y^t_k\ldots Y^t_1-\sum_{l=0}^{k-1} t^{2-k+l}\mu^l\,Y^t_l\ldots Y^t_1\right)\odot\pt_J^t\,.\]
\end{lem}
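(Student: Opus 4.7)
The plan is to induct on $k$, using two ingredients: the identity $\psi(\tau_k)=\mu Y^t_k-t$ from~\eqref{psitau}, and the observation that $Y^t_k\odot\pt_J^t=\pt_J^t$ for every $k\in J=\{2,\ldots,n-1\}$. The latter follows from~\eqref{eq:3}: for $k\in J$ one has $\psi(\tau_k)Y^t_J=t^{-1}Y^t_J$, so $\psi(\tau_k)\odot\pt_J^t=t^{-1}\pt_J^t$; writing this as $(\mu Y^t_k-t)\odot\pt_J^t=t^{-1}\pt_J^t$ and rearranging gives $Y^t_k\odot\pt_J^t=\mu^{-1}(t+t^{-1})\pt_J^t=\pt_J^t$, since $\mu=t+t^{-1}$. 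The base case $k=1$ is then immediate: only the $l=0$ term appears on the right-hand side and the formula reduces to $\tau_1\odot\pt_J^t=(\mu Y^t_1-t)\odot\pt_J^t$.

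For the inductive step with $k\ge 2$, I would apply $\tau_k$ to the formula for $k-1$ and expand via $\psi(\tau_k)=\mu Y^t_k-t$. Each resulting term of the form $Y^t_kY^t_l\cdots Y^t_1\odot\pt_J^t$ with $0\le l\le k-2$ can be simplified in two steps: first, commute $Y^t_k$ past $Y^t_l,\ldots,Y^t_1$, which is justified because $|k-i|\ge 2$ for all $i\le l$, so the simple reflections $s_k$ and $s_i$ commute, and a direct check shows the push-pull elements $Y^t_k$ and $Y^t_i$ commute as well in $\DFh$; then apply $Y^t_k\odot\pt_J^t=\pt_J^t$, which is valid since $k\ge 2$ means $k\in J$. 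This turns every such term into a scalar multiple of $Y^t_l\cdots Y^t_1\odot\pt_J^t$.

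What remains is a coefficient check: matching the coefficient of $Y^t_l\cdots Y^t_1\odot\pt_J^t$ for each $l\in\{0,\ldots,k\}$. The top coefficient ($l=k$) is clearly $\mu^k$. The $l=k-1$ coefficient arises only from the $-t$ part hitting the $\mu^{k-1}Y^t_{k-1}\cdots Y^t_1$ summand, giving $-t\mu^{k-1}=-t^{2-k+(k-1)}\mu^{k-1}$, as required. For $0\le l\le k-2$, the two contributions combine to $-t^{3-k+l}\mu^{l+1}+t^{4-k+l}\mu^l=\mu^l\,t^{3-k+l}(t-\mu)=-\mu^l\,t^{2-k+l}$, using the identity $t-\mu=-t^{-1}$. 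The main obstacle is nothing deeper than this bookkeeping; the essential algebraic input is simply $Y^t_k\odot\pt_J^t=\pt_J^t$ for $k\in J$.
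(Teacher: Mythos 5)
Your argument is correct and is essentially the paper's proof: both rest on the two facts that $\psi(\tau_i)=\mu Y^t_i-t$ and that $Y^t_l\odot\pt_J^t=\pt_J^t$ for $l\in J$, and both carry out what amounts to the same expansion of $(\mu Y^t_k-t)\cdots(\mu Y^t_1-t)\odot\pt_J^t$, term by term. The only difference is cosmetic: you make the commutation $Y^t_kY^t_i=Y^t_iY^t_k$ for $|k-i|\ge 2$ explicit (which is indeed the hidden step needed to reduce $Y^t_kY^t_l\cdots Y^t_1\odot\pt_J^t$ to $Y^t_l\cdots Y^t_1\odot\pt_J^t$), and you organize the expansion as an induction on $k$ with an explicit coefficient check via $t-\mu=-t^{-1}$, whereas the paper states the reduction $\tau_lY^t_j\cdots Y^t_1\odot\pt_J^t=t^{-1}Y^t_j\cdots Y^t_1\odot\pt_J^t$ for $l>j+1$ and leaves the bookkeeping to the reader.
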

\begin{proof}
Since $k_\al=1$, we note first that $Y^t_l\odot\pt_J^t=\pt_J^t$ for $l>1$. This implies that, for $l>j+1$, we have
\[\tau_lY^t_j\ldots Y^t_1\odot\pt_J^t=(\mu Y^t_l-t)Y^t_j\ldots Y^t_1\odot\pt_J^t=t^{-1}Y^t_j\ldots Y_1^t\odot\pt_J^t\,.\]
The result follows easily by using this fact in order to expand
\[\tau_k\ldots\tau_1\odot \pt_J^t=(\mu Y^t_k-t)\ldots(\mu Y^t_1-t)\odot\pt_J^t\,.\]
\end{proof}

Let 
\[\mu_k:=\tfrac{t^{k+1}-t^{-(k+1)}}{t-t^{-1}}=t^k+t^{k-2}+\ldots+t^{-(k-2)}+t^{-k}\,.\]
This notation is local to this section, so it does not interfere with similar notation used earlier. In particular, $\mu_1=\mu$ and $\mu_0=1$. 

The following lemma expresses a KL-Schubert class in terms of Bott-Samelson classes.

\begin{lem}\label{lemma2} We have 
\begin{equation}\label{cwij}C_{w_i}^J= \mu^i\,\zeta_{w_i}^J-\sum_{k=0}^{i-2} \mu_{i-2-k}\,\mu^k\, \zeta_{w_k}^J\,.\end{equation}
\end{lem}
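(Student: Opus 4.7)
My plan is to prove the lemma by induction on $i$, simultaneously verifying the auxiliary fact that for projective space all parabolic Kazhdan-Lusztig polynomials are trivial:
\begin{equation*}
C^J_{w_i} \;=\; \sum_{k=0}^i t^{i-k}\,\frakn^J_{w_k}\,, \qquad\text{i.e., } P^J_{w_k,w_i}=1 \text{ for } 0\le k\le i.\qquad(\ast)
\end{equation*}
The base cases $i=0,1$ follow by direct inspection: $C^J_{w_0}=\pt^t_J=\frakn^J_e=\zeta^J_{w_0}$, and $C^J_{w_1}=\frakn^J_{s_1}+t\,\frakn^J_e=\mu\,\zeta^J_{w_1}$ using $\psi(\tau_1)=\mu Y^t_1-t$; the latter is visibly KL-invariant with the correct leading term.

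For the inductive step, I establish the recursion
\[
C^J_{w_i} \;=\; \psi(\gamma_i)\odot C^J_{w_{i-1}} - C^J_{w_{i-2}}, \qquad\text{where } \psi(\gamma_i)=\mu Y^t_i. \qquad\text{(R)}
\]
The right-hand side is KL-invariant (since $\gamma_i$ is invariant and the KL involution on $\calN^J$ is compatible with $\odot$), so by uniqueness of the parabolic KL basis it suffices to show that its $\frakn^J_{w_k}$-expansion matches the right-hand side of $(\ast)$. The computation of $\psi(\tau_i)\odot\frakn^J_{w_k}$ for $0\le k\le i-1$ uses Lemma~\ref{lem:tauaction}: for $k=i-1$, we have $s_iw_{i-1}=w_i\in W^J$, so case (ii) gives $\frakn^J_{w_i}$; for $k\le i-2$, since $|i-j|\ge 2$ for all $j\le k$, the reflection $s_i$ commutes with $s_k,\ldots,s_1$, hence $s_iw_k=w_ks_i$ with $i\in J$, putting us in case (iii) and yielding $t^{-1}\frakn^J_{w_k}$. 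Adding the $t\cdot\mathrm{id}$ contribution from $\gamma_i=\tau_i+t$, using the identity $\mu t-1=t^2$, and subtracting the inductive form $C^J_{w_{i-2}}=\sum_{k=0}^{i-2}t^{i-2-k}\frakn^J_{w_k}$, the coefficients telescope to yield $\sum_{k=0}^i t^{i-k}\frakn^J_{w_k}$, establishing (R) and $(\ast)$ for $i$.

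With $(\ast)$ in hand, the $\zeta$-basis formula follows by substituting Lemma~\ref{lemma1} into $(\ast)$ and exchanging the order of summation. For $0\le l\le i-2$, the coefficient of $\zeta^J_{w_l}$ becomes
\[
\mu^l\!\left(t^{i-l}-\sum_{k=l+1}^{i}t^{i-2k+l+2}\right)=\mu^l\bigl(t^{i-l}-(t^{i-l}+t^{i-l-2}+\cdots+t^{-(i-l-2)})\bigr)=-\mu^l\,\mu_{i-l-2};
\]
the coefficient of $\zeta^J_{w_{i-1}}$ vanishes, and that of $\zeta^J_{w_i}$ is $\mu^i$. This is exactly the claimed formula.

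The main obstacle is the verification of (R) at the level of $\frakn^J$-expansions, since the combinatorial analysis of $s_iw_k$ (requiring the three cases of Lemma~\ref{lem:parw}) must be combined with the algebraic identity $\mu t-1=t^2$ to produce the precise telescoping; once this is done, the final geometric-series manipulation in the $\zeta$-basis is routine.
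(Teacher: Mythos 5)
Your proof is correct, and its core computation---substituting Lemma~\ref{lemma1} into $C_{w_i}^J= \sum_{k=0}^i t^{i-k}\frakn^J_{w_k}$ and regrouping---is the same as the paper's (the paper phrases it as ``cancel, then collect the remainder'' rather than isolating coefficients, but the arithmetic is identical). The one genuine difference is your treatment of the equality $C_{w_i}^J = \sum_{k=0}^i t^{i-k}\frakn^J_{w_k}$, i.e.\ the triviality of the parabolic KL polynomials for $\mathbb{P}^{n-1}$. The paper simply cites the classical fact that rational smoothness forces $P^J_{w,v}=1$; you instead prove it from scratch via the three-term recursion $C^J_{w_i}=\psi(\gamma_i)\odot C^J_{w_{i-1}}-C^J_{w_{i-2}}$, verified by combining the three cases of Lemma~\ref{lem:parw} with the $\odot$-action formula and the quadratic identity $\mu t - 1 = t^2$. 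This is a valid and self-contained alternative; note that it silently uses the $\calN^J$-analogue of Lemma~\ref{lem:tauaction}, which the paper justifies by the isomorphism $\calM^J\cong\calN^J$ in \S\ref{sec:FmtoFh}, so you may want to cite that identification explicitly when invoking the action formula. The paper's route is shorter because it leans on known KL theory; yours has the mild advantage of not requiring the reader to import the smoothness-implies-trivial-KL fact, at the cost of the extra inductive step.
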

\begin{proof}
Since the complex projective spaces are smooth, all the corresponding parabolic Kazhdan-Lusztig polynomials are equal to $1$. So we have
\[C_{w_i}^J=\left(\sum_{k=0}^{i} t^{i-k}\tau_k\ldots\tau_1\right)\odot\pt_J^t\,.\]
By plugging the formula in Lemma~\ref{lemma1} into the previous one, we obtain:
\[C_{w_i}^J=\left(\sum_{k=0}^{i} t^{i-k} \left( \mu^k\,Y^t_k\ldots Y^t_1-\sum_{l=0}^{k-1} t^{2-k+l}\mu^l\,Y^t_l\ldots Y^t_1 \right)\right)\odot\pt_J^t\,.\]
For each $k$ between $0$ and $i-1$, we now cancel the term $t^{i-k}\mu^k\,Y^t_k\ldots Y^t_1$ in the outer sum with the term corresponding to $l=k$ in the inner sum, for the index $k+1$ in the outer sum; indeed, the latter is
\[t^{i-(k+1)}t^{2-(k+1)+k}\mu^k\,Y^t_k\ldots Y^t_1\,.\]
The only positive term surviving is $\mu^i\,Y^t_i\ldots Y^t_1$ (throughout, we have to take into account the $\odot$-action on $\pt_J^t$). By collecting the remaining terms, we obtain the desired formula. 
\end{proof}

\begin{lem}\label{lemma3} We have
\[\mu^k-\sum_{l=0}^{k-2} \mu_{k-2-l}\,\mu^l-\mu_k=0\,.\]
\end{lem}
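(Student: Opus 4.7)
\textbf{Proof plan for Lemma~\ref{lemma3}.} The plan is to proceed by induction on $k$, leveraging the classical three-term recurrence satisfied by the $q$-integers $\mu_k$. Specifically, from the explicit formula
\[
\mu_k = t^k + t^{k-2} + \ldots + t^{-(k-2)} + t^{-k},
\]
a direct expansion shows that
\[
\mu \cdot \mu_k = (t+t^{-1})\mu_k = \mu_{k+1} + \mu_{k-1}, \qquad k \ge 1.
\]
This is the key input; I would verify it once by inspection (the middle terms of $t\cdot\mu_k$ and $t^{-1}\cdot\mu_k$ overlap to give one copy of each intermediate power, while the extreme powers $t^{\pm(k+1)}$ each appear once).

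Rewriting the claim as
\[
\mu^k = \mu_k + \sum_{l=0}^{k-2} \mu_{k-2-l}\,\mu^l,
\]
I would first check the base cases $k=0$ (both sides equal $1$, with an empty sum) and $k=1$ (both sides equal $\mu$). For the inductive step, assuming the identity holds at $k$, I would multiply by $\mu$ to obtain
\[
\mu^{k+1} = \mu\cdot\mu_k + \sum_{l=0}^{k-2}\mu_{k-2-l}\,\mu^{l+1} = \mu_{k+1} + \mu_{k-1} + \sum_{l=1}^{k-1}\mu_{k-1-l}\,\mu^{l},
\]
using the recurrence on the first summand and reindexing $l\mapsto l+1$ on the sum. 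Completing the range of the sum from $l=0$ produces an extra term $\mu_{k-1}\mu^{0}=\mu_{k-1}$, which cancels the stray $\mu_{k-1}$ on the right, yielding
\[
\mu^{k+1} = \mu_{k+1} + \sum_{l=0}^{k-1}\mu_{(k+1)-2-l}\,\mu^{l},
\]
which is the identity at $k+1$.

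The only subtle point is the bookkeeping in the index shift and the cancellation of the $\mu_{k-1}$ terms; beyond that, the argument is essentially a one-line application of the Chebyshev-type recurrence, so I do not expect any real obstacle.
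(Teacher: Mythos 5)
Your proposal is correct and takes essentially the same approach as the paper: induction on $k$, multiplying the inductive hypothesis by $\mu$, reindexing, and invoking the Chebyshev-type recurrence $\mu\,\mu_k=\mu_{k+1}+\mu_{k-1}$. The only differences are cosmetic (you step from $k$ to $k+1$ rather than from $k-1$ to $k$, and you include $k=0$ as an extra base case); also note that what you describe as the extra $l=0$ term ``cancelling'' the stray $\mu_{k-1}$ is really an absorption into the sum, but the arithmetic is right.
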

\begin{proof}
We use induction on $k$ which starts at $k=1$ (in which case the summation in the formula is void). Assuming that the above formula holds for $k-1>0$, and multiplying it through by $\mu$, we obtain:
\[\mu^k-\sum_{l=1}^{k-2} \mu_{k-2-l}\,\mu^l=\mu_{k-1}\,\mu\,.\]
Plugging the right-hand side into the formula for $k$ to be proved, it remains to show that
\[\mu_{k-1}\,\mu=\mu_{k-2}+\mu_k\,,\;\;\;\;\;\mbox{i.e.,}\;\; (t^k-t^{-k})(t+t^{-1})=(t^{k-1}-t^{-(k-1)})+(t^{k+1}-t^{-(k+1)})\,.\]
But this is immediate.
\end{proof}

It now remains to plug the formula for Bott-Samelson classes in Proposition~\ref{bscp} into the formula for a KL-Schubert class in Lemma~\ref{lemma2}. We will show that $\mu^{-i}C_{w_i}^J$ is obtained by simply setting $u=0$ in the formula for $\zeta_{w_i}^J$.

\begin{thm}\label{thmcpn} We have
\[\mu^{-i}C_{w_i}^J=\sum_{j=1}^{i+1} [j,i+2]\ldots[jn] \sum_{w\in W_J}f_{w_{j-1}w}\,.\]
This expression agrees with the localization formula {\rm \eqref{smooth}} for the corresponding Schubert class.
\end{thm}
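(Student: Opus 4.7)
The strategy is to combine Lemma~\ref{lemma2} with the explicit formula of Proposition~\ref{bscp}, reorganize the resulting double sum by a single index, and show that all the $u$-dependent contributions cancel via Lemma~\ref{lemma3}. The agreement with the localization formula~\eqref{smooth} is then a direct calculation in Bruhat order.

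First, I would divide the identity of Lemma~\ref{lemma2} through by $\mu^i$ and read off the coefficient of $\sum_{w\in W_J}f_{w_{j-1}w}$ in $\mu^{-i}C^J_{w_i}$, for each $j$ with $1\le j\le i+1$; call this coefficient $c_j$, so
\[
c_j = q_{ij}-\sum_{k=\max(j-1,0)}^{i-2}\mu_{i-2-k}\mu^{k-i}q_{kj}\,.
\]
Setting $b_l:=[jl][j,l+1]\cdots[jn]$, the formula of Proposition~\ref{bscp} reads $q_{kj}=b_{k+2}+u\sum_{l=j+1}^{k}b_l$, so the contributions to $c_j$ reorganize naturally by the single index $l$. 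For $l=i+2$ only $q_{ij}$ contributes, giving $+1$; for $l=i+1$ there is no contribution; and for $l\in\{j+1,\ldots,i\}$, combining the $u$-part of $q_{ij}$, the constant part of $q_{l-2,j}$, and the $u$-parts of the $q_{kj}$'s with $k\ge l$ yields, after substituting $u=\mu^{-2}$ and factoring,
\[
u\left(1-\mu_{i-l}\mu^{l-i}-\sum_{k=l}^{i-2}\mu_{i-2-k}\mu^{k-i}\right).
\]
Setting $M:=i-l$ and multiplying through by $\mu^M$, the expression in parentheses becomes $\mu^M-\mu_M-\sum_{p=0}^{M-2}\mu_p\mu^{M-2-p}$, which vanishes by Lemma~\ref{lemma3}. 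Hence $c_j=b_{i+2}=[j,i+2][j,i+3]\cdots[jn]$, yielding the stated formula for $\mu^{-i}C_{w_i}^J$.

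For the match with~\eqref{smooth}, I would observe that $w_{j-1}$ acts as the cycle $(1,j,j-1,\ldots,2)$ and hence maps $\Sigma^-_{\Pi/J}=\{\varepsilon_i-\varepsilon_1:2\le i\le n\}$ bijectively onto $\{\varepsilon_k-\varepsilon_j:k\ne j\}$; in particular $\prod_{\alpha\in w_{j-1}(\Sigma^-_{\Pi/J})}x_\alpha=\prod_{k\ne j}[jk]$. Since $(s_\alpha w_{j-1})(1)=k$ when $\alpha=\varepsilon_k-\varepsilon_j$, the minimal left coset representative of $s_\alpha w_{j-1}$ in $W^J$ is $w_{k-1}$, so by the standard parabolic Bruhat criterion $s_\alpha w_{j-1}\le w_iw_J$ iff $w_{k-1}\le w_i$ in $W^J$; since Bruhat order on this $W^J$ is total, this is equivalent to $k\le i+1$. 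Therefore the denominator in~\eqref{smooth} is $\prod_{k\ne j,\,k\le i+1}[jk]$, and the ratio collapses to $[j,i+2][j,i+3]\cdots[jn]$, completing the proof. The main obstacle is the careful bookkeeping of the two nested sums in the second step; once reindexed by $l$, the cancellation of all $u$-dependent terms is a single mechanical application of Lemma~\ref{lemma3}.
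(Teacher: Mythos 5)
Your proof is correct and follows essentially the same strategy as the paper's: expand $C_{w_i}^J$ via Lemma~\ref{lemma2}, substitute Proposition~\ref{bscp}, fix the coefficient of a single product $[j,l][j,l+1]\cdots[jn]$, and reduce the cancellation to Lemma~\ref{lemma3}. Your reorganization by the single index $l$ is a notational variant of the paper's "coefficient of $[[k]]$" computation, with the slight advantage that the edge cases $j=i$ and $j=i+1$ (treated separately in the paper) fall out automatically since the range $\{j+1,\ldots,i\}$ is empty; you also supply the details of the match with \eqref{smooth} (the cycle description of $w_{j-1}$, the image $w_{j-1}(\Sigma^-_{\Pi/J})=\{\varepsilon_k-\varepsilon_j:k\ne j\}$, and the parabolic Bruhat criterion reducing $s_\alpha w_{j-1}\le w_iw_J$ to $k\le i+1$), which the paper leaves to the reader.
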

\begin{proof}
Let us focus on the coefficient of $f_{w_{j-1}}$ in $C_{w_i}^J$, where $1\le j\le i+1$ is fixed. For simplicity, we write $[[k]]:=[jk]\ldots[jn]$. We  calculate the mentioned coefficient as mentioned above. We write \eqref{cwij} by replacing the summation index $k$ by $p$, and note that $\zeta_{w_p}^J$ contains $f_{w_{j-1}}$ precisely when $j-1\le p\le i-2$. Thus, the cases $j=i$ and $j=i+1$ are immediate, so we assume $j<i$. 

The desired coefficient is expressed as follows:
\begin{align}
&\mu^i[[i+2]]+u\mu^i\sum_{k=j+1}^i[[k]]-\sum_{p=j-1}^{i-2}\mu_{i-2-p}\,\mu^p\left([[p+2]]+u\sum_{l=j+1}^p[[l]]\right)\nonumber\\
=&\mu^i[[i+2]]+u\mu^i\sum_{k=j+1}^i[[k]]-\sum_{p=j+1}^{i}\mu_{i-p}\,\mu^{p-2}\left([[p]]+u\sum_{l=j+1}^{p-2}[[l]]\right)\,.\label{longex}
\end{align}
The only products appearing in \eqref{longex} beside the first term are $[[k]]$ for $j+1\le k\le i$. Fixing such a value $k$, it suffices to show that the coefficient of $[[k]]$ is $0$. Let us first note that $[[k]]$ appears in the inner sum in \eqref{longex} for the index $p$ of the outer sum precisely when $j+1\le k\le p-2$, so $p$ needs to be in the range $k+2\le p\le i$. It follows that the coefficient of $[[k]]$ in \eqref{longex} is given by
\begin{align*}u\mu^i-\mu_{i-k}\,\mu^{k-2}-u\sum_{p=k+2}^i\mu_{i-p}\,\mu^{p-2}&=\mu^{i-2}-\mu_{i-k}\,\mu^ {k-2}-\sum_{p=k}^{i-2}\mu_{i-p-2}\,\mu^{p-2}\,\\&=\mu^{k-2}\left( \mu^{i-k}-\mu_{i-k}-\sum_{p=k}^{i-2}\mu_{i-p-2}\,\mu^{p-k} \right)\,.\end{align*}
The latter expression is $0$ by Lemma~\ref{lemma3}. 

We conclude the proof by easily checking that the obtained formula agrees with the localization formula~\eqref{smooth}. 
\end{proof}

\bibliographystyle{plain}

\end{document}